\documentclass[11pt,reqno]{amsart}
\usepackage{amssymb,latexsym}
\usepackage[colorlinks=true]{hyperref}
\usepackage[hmargin=3.2cm, vmargin=3.2cm]{geometry}
\textwidth 16cm 
\addtolength{\textheight}{3cm} \addtolength{\topmargin}{-1.5cm}
\usepackage{amsmath}
\usepackage{latexsym}
\usepackage{amssymb,amsthm,amsfonts}
\usepackage{graphicx}
\usepackage{color} 















\makeatletter
\def\@currentlabel{2.1}\label{e:dispaa}
\def\@currentlabel{2.21}\label{e:dispau}
\def\@currentlabel{2.22}\label{e:dispav}
\def\@currentlabel{2.23}\label{e:dispaw}
\def\@currentlabel{2.24}\label{e:dispax}
\def\theequation{\thesection.\@arabic\c@equation}
\makeatother

\newcommand{\D}{\Delta}

\newcommand{\LL}{{\tt L}  }

\newcommand{\nn}{ {\nabla}  }

\newcommand{\pp}{ {\partial} }

\newcommand{\R} {\mathbb R}
\newcommand{\cuad}{{\sqcap\kern-.68em\sqcup}}

\newcommand{\DD}{{\mathcal D}}

\newcommand{\dist}{{\rm dist}\, }
\newcommand{\foral}{\quad\mbox{for all}\quad}
\newcommand{\ve}{\varepsilon}

\newcommand{\be}{\begin{equation}}
\newcommand{\ee}{\end{equation}}

\newcommand{\la}{\lambda}

\newcommand{\equ}[1]{(\ref{#1})}

\renewcommand{\theequation}{\thesection.\arabic{equation}}
 
 \newtheorem{lemma}{Lemma}[section]

\newtheorem{theorem}{Theorem}

\newtheorem{remark}{Remark}[section]
\newcommand{\bremark}{\begin{remark} \em}
\newcommand{\eremark}{\end{remark} }

\newtheorem{proposition}  {Proposition}[section]

\newcommand{\ov}{\overline}

\renewcommand{\O }{\Omega }

\newcommand{\N}{\mathbb{N}}

\newcommand{\e}{\varepsilon}
\newcommand{\del}{\partial}
\newcommand{\pa}{\partial}
\newcommand{\G}{\Gamma}

\newcommand{\s }{\sigma }
\renewcommand{\a }{\alpha }
\renewcommand{\b }{\beta }
\renewcommand{\d}{\delta }
\newcommand{\zn }{{x_{N}}}

\newcommand{\g }{\gamma}
\newcommand{\U}{\Upsilon}
\renewcommand{\theequation}{\thesection.\arabic{equation}}


\begin{document}

\title[Concentration at submanifolds for an elliptic Dirichlet problem]{Concentration at submanifolds for an elliptic Dirichlet problem near high critical exponents}
\author{Shengbing Deng}
\address{\noindent S. Deng -
School of Mathematics and Statistics, Southwest University,
Chongqing 400715, People's Republic of China.}
\email{shbdeng@swu.edu.cn}

\author{Fethi Mahmoudi }
\address{\noindent F.  Mahmoudi - Departamento de Ingenier\'{\i}a
Matem\'atica and
CMM, Universidad de Chile, Casilla 170 Correo 3, Santiago,
Chile.}\email{fmahmoudi@dim.uchile.cl}

\author{Monica Musso}
\address{\noindent M. Musso - {\it Corresponding author}
Departamento de Matem\'atica, Pontificia Universidad Catolica de
Chile, Avda. Vicu\~na Mackenna 4860, Macul, Chile} \email {mmusso@mat.puc.cl}

\keywords{Critical Sobolev Exponent, Blowing-up
Solutions, Nondegenerate minimal submanifolds}

\subjclass[2010]{35B40; 35J10; 35J61;  58C15 }

\begin{abstract}
Let $\Omega$ be a open bounded domain in $\R^n $ with smooth boundary $\pp\Omega$. We consider the equation $ \Delta u +
u^{\frac{n-k+2}{n-k-2}-\varepsilon} =0\,\hbox{ in }\,\O $, under
zero Dirichlet boundary condition, where  $\varepsilon$ is a small positive  parameter.  We assume that there is  a
$k$-dimensional closed, embedded minimal submanifold $K$ of  $\partial\O$, which is
non-degenerate, and along which a certain weighted average of sectional curvatures of $\pp\Omega$ is negative. Under these assumptions, we prove existence of a sequence $\e=\e_j$ and a solution $u_{\ve}$ which concentrate along $K$, as $\e \to 0^+$, in the sense that
$$
 |\nabla u_{\ve} |^2\,\rightharpoonup \, S_{n-k}^{\frac{n-k}{2}} \,\delta_K   \quad
\mbox{as} \ \ \ve \to 0
$$
where $\delta_K $
stands for the Dirac measure supported on  $K$ and
$S_{n-k}$ is an explicit positive constant. This result generalizes the one obtained in \cite{dmpa}, where the case $k=1$ is considered.

\end{abstract}
\maketitle


\section{Introduction and statement of  main results}
Consider  the following  nonlinear problem kwown as the Lane-Emden-Fowler problem (\cite{fowler})
\begin{eqnarray} \left\{
\begin{array}{rlllllll} \Delta u  + u^p  & = & 0,
\quad
 & u  >  0 \quad  \mbox{ in } \quad \Omega,
 \\[1mm]
                         u & = & 0\quad  &\mbox{ on }\quad
                         \partial\Omega ,
\end{array} \right.
\label{ef1}
\end{eqnarray}
where $\Omega$ is a bounded domain with smooth boundary in $\R^n$
and $p>1$.  When the exponent $p$ is subcritical ($1< p< \frac{n+2}{n-2}$),
compactness of Sobolev's embedding yields a solution as a minimizer
of the variational problem
\begin{equation} S(p) =
\inf_{u\in H_0^1(\Omega) \setminus \{0\}} \frac{\int_\Omega |\nabla
u|^2 }{\left (\int_\Omega |u|^{p+1} \right )^{2\over p+1}}.
\label{Sp}\end{equation}

For $p\ge  \frac{n+2}{n-2}$ this approach
fails and essential obstructions to existence arise: Pohozaev
\cite{Po} found that no solution to \equ{ef1} exists if the domain
is star-shaped. In contrast,  Kazdan and Warner \cite{kw} observed
that if $\Omega$ is a symmetric annulus then compactness holds for
any $p>1$ within the class of radial functions, and a solution can
again always be found by the above minimizing procedure. Compactness
in the minimization is also restored, without symmetries, by the
addition of suitable linear perturbations exactly at the critical
exponent $p=\frac{n+2}{n-2}$, as established  by Brezis and
Nirenberg \cite{bn}.

\medskip

If $p\geq {n+2 \over n-2}$, the topology and geometry of the domain play a crucial role  for the solvability of the above problem; indeed, for $p=\frac{n+2}{n-2}$,  Bahri and Coron in \cite{bc} proved the existence of solution to \equ{ef1} when the
topology of $\Omega$ is non-trivial in suitable sense.  For powers
larger than critical direct use of variational arguments seems
hopeless, and one needs more general arguments to get solvability. The presence of nontrivial topology turns out to be not sufficient
to get solvability in the supercritical situation $p> \frac{n+2}{n-2}$. In fact, for $n\ge 4$ Passaseo \cite{passaseo}
exhibits a domain constituted by a thin tubular neighborhood of a copy of the sphere $\mathbb{S}^{n-2}$ embedded in $\R^n$ for which a
Pohozaev-type identity yields that no solution exists if $p\ge
\frac{n+1}{n-3}$ ({\em the so-called second critical exponent}).
\medskip

In this paper we consider the case when $p$  is below but sufficiently close to the
$k$-th critical exponent, defined as ${n-k+2 \over n-k-2}$, with $0\le k\le  n-1$.  Namely we consider the following problem
\begin{equation}\label{eq:pe}
  \begin{cases}
    \Delta u  +u^{\frac{n-k+2}{n-k-2}-\ve} =0,\ \ \ u>0 & \text{ in } \O, \\
    u = 0 & \text{ on } \partial \O,
  \end{cases}
\end{equation}
where $\ve>0$ is a small parameter.  Assuming  that $\partial \Omega$ contains a closed minimal
non-degenerate submanifold $K$ of dimension $k$ along which a certain weighted average of sectional curvatures of $\pp\Omega$ is negative, we find a solution to \equ{ef1} which concentrates as  $p$ approaches $\frac{n+2-k}{n-2-k}$ (as $\e$ tends to $0^+$)
in a sense to be determined later. Before we state our main result, let us recall some previous works in the cases $k=0$ (point bubbling) and $k=1$ (line bubbling).

\medskip

The case $k=0$ has been extensively considered in the literature, see for instance  \cite{bp,han,rey,wei} and some references therein. It has been proven the existence of  {\em bubbling solutions}
around special points of the domain, which resemble a sharp extremal
of the best Sobolev constant in $\R^n$
$$ S_n:=\inf_{u\in {\mathcal D}^{1,2}(\R^n) \setminus \{0\}}
\frac{\int_{\R^n} |\nabla u|^2 }{\left (\int_{\R^n} |u|^{2n\over
n-2} \right )^{n-2\over n}}.
$$
The  behavior of a solution $u_\ve$  which minimizes $S(p)$ in \equ{Sp} for  $p= p_\ve =
\frac{n+2}{n-2} -\ve,$ is given by
$$
u_\ve(x) \, = \,\mu_\ve^{-{n-2 \over 2} } w_n(\mu_\ve^{-1}
(x-x_\ve)) \,+ \,o(1),\qquad \mu_\ve \, \sim \, \ve^{\frac 1{n-2}}\,
,
$$
as  $\ve \to 0^+$ , where  $w_n$ is the {\em standard bubble},
\begin{equation}
w_n(x) =\left ( \frac {c_n} {1+ |x|^2} \right )^{\frac {n-2}2 },
\qquad c_n = (n(n-2))^{1\over n-2}, \label{wn}
\end{equation}
a radial solution of
\begin{equation}\label{w0}
\Delta w + w^{\frac{n+2}{n-2}}  = 0 \quad\hbox{in } \R^n
\end{equation}
corresponding to an extremal for $S_n$, \cite{aubin,talenti}.
 The blow-up point $x_\ve$
approaches (up to a subsequences) a harmonic center $x_0$ of
$\Omega$, namely a minimizer for Robin's function of the domain, the
diagonal of the regular part of Green's function. The solution
concentrates as a Dirac mass at $x_0$, namely
\begin{equation}
|\nabla u_\ve|^2 \, \rightharpoonup\,  S_n^{\frac n2 }\,
\delta_{x_0} \quad\hbox{as } \ve\to 0\label{bub}\end{equation} in
the sense of measures. 
We also refer to \cite{blr,dfm} and to the
survey \cite{dm} for related results on construction of
point-bubbling solutions for problems near the critical exponent.


\medskip

The case $k=1$ has been  studied by del Pino-Musso-Pacard \cite{dmpa}. They proved that given a closed non-degenerate geodesic
$\Gamma$ on $\partial \Omega$, which has globally {\em negative
curvature} and assuming that a  non-resonance condition holds, then
for $n\ge 8$, problem $\equ{eq:pe}$ with $k=1$ has a
solution $u_\ve$ that satisfies
$$
|\nabla u_\ve|^2 \, \rightharpoonup \, S_{n-1}^{\frac {n-1}2}
\,\delta_\Gamma
$$
as $\ve\to 0$  in the sense of measures, where $\delta_\Gamma$ is
the Dirac measure supported on the curve $\Gamma$.

This result shows that  line-bubbling phenomenon  is conceptually quite
different to point bubbling. In fact, point concentration is determined by global information on the
domain encoded in Green's function, while only local structure of
the domain near the curve $\Gamma$ is relevant to the line-bubbling.
This is a typical phenomenon for concentration on positive dimensional sets.
Other construction of concentration along high dimensional sets under strong symmetry assumptions on the domain $\Omega$ is contained in \cite{cfp}.

\medskip
The purpose of this paper is to study existence of positive solutions to Problem \eqref{eq:pe} when $\Omega$ is a non symmetric domain in the general case $1\le k\le n-1$. Before we state our  result we need to introduce the following notations:

\medskip
Let $q \in K$. We denote by $T_q\pp\Omega$ the tangent space to
$\pp\Omega$ at the point $q$. We consider the {\em shape operator} $
\LL :  T_q\pp\Omega \to T_q\pp\Omega$ defined as
$$
\LL[e] := -\nn_e \nu(q)
$$
where $\nn_e \nu(q) $ is the directional derivative of the vector
field $\nu$ in the direction $e$. Let us consider the orthogonal
decomposition
 $$T_q\pp\Omega = T_qK\oplus N_qK $$
where $N_qK$ stands for the normal bundle of $K$. We choose
orthonormal bases
 $(e_a)_{a=1,\ldots, k}$ of $T_qK $ and  $(e_i)_{i=k+1,\ldots, n-1}$ of $N_qK$.

Let us consider the $(n-1)\times (n-1)$ matrix ${H}(q)$
representative of $\LL$ in these bases, namely
$$
H_{\a\b} (q)=  e_\a \cdot \LL[e_\b] .
$$
This matrix also represents the second fundamental form of
$\pp\Omega$ at $q$ in this basis. $H_{\a\a}(q)$ corresponds to the
curvature of $\pp\Omega$ in the direction $e_\a$. By definition, the
mean curvature of $\pp\Omega$ at $q$ is given by the trace of this
matrix, namely

$$
H_{\pp\Omega}(q) =  \sum_{\alpha=1}^{n-1} H_{\alpha\alpha}(q).
$$

In order to state our result we need to consider the mean of the
curvatures in  the directions of   $T_qK$ and $N_qK$, namely the
numbers  $\sum_{a=1}^k H_{aa}(q)$ and $\sum_{j=k+1}^{n-1}
H_{jj}(q)$.

\begin{theorem} \label{teo1}
Let $\O$ be a smooth bounded domain in $\mathbb{R}^n$, let $K$ be  $k$-dimensional non degenerate minimal submanifold  of
$\pp\Omega$.
Assume that $n-k\ge 7$ and that  the mean of the curvatures in  the directions of   $T_qK$ is negative, namely,
$$
\sum_{a=1}^k H_{aa}(q)<0\quad \mbox{for\ all}\ \ q\in K.
$$
\noindent Then, for a sequence
$\ve= \ve_j\longrightarrow \,0$, Problem \equ{eq:pe} has a positive solution
$u_{\ve}$ concentrating along $K$ as $\ve\to 0$, in the sense that
$$
 |\nabla u_{\ve} |^2\,\rightharpoonup \, S_{n-k}^{\frac{n-k}{2}} \,\delta_K   \quad
\mbox{as} \ \ \ve \to 0
$$
where $\delta_K $
stands for the Dirac measure supported on  $K$ and
$S_{n-k}$ is an explicit positive constant.
\end{theorem}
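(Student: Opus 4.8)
The plan is to construct the solution by a Lyapunov--Schmidt reduction adapted to the submanifold geometry, generalizing the scheme of \cite{dmpa} from $k=1$ to general $k$. First I would set up Fermi coordinates around $K$ in $\overline\Omega$, writing a point near $K$ as $q=\exp_y(\sum_{i}z_i E_i)$ with $y\in K$, $z=(z_i)_{i=k+1}^{n-1}$ normal coordinates, and $z_{n-1}$ (say) the distance to $\partial\Omega$, so that $\Omega$ locally becomes $\{z_{n-1}>0\}$. In these coordinates the Laplacian acquires metric coefficients that expand, to the orders we need, in terms of the second fundamental form $H_{\alpha\beta}$ of $\partial\Omega$ and the curvature of $K$ as a submanifold. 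The dimension reduction $n\to n-k$ comes from the ansatz: the approximate solution is built by taking, at each $y\in K$, a scaled standard bubble $W_{\mu(y)}$ in the $(n-k)$ transversal variables (the $z$'s, minus one Dirichlet direction turned into a half-space bubble via even reflection or a projected Dirichlet bubble), i.e.
\[
U(y,z) = \mu(y)^{-\frac{n-k-2}{2}}\, w_{n-k}\!\left(\frac{z-\Phi(y)}{\mu(y)}\right)
\]
where $w_{n-k}$ solves $\Delta w + w^{\frac{n-k+2}{n-k-2}}=0$ in $\mathbb R^{n-k}$ (here the half-space Dirichlet condition forces one of the $z$-components to be handled via the Green's function / reflection, producing the constant $S_{n-k}$), and the parameters to be adjusted are the concentration rate $\mu(y)>0$ and the transversal location $\Phi(y)\in N_yK$, both functions on $K$.

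Next I would compute the error $S(U):=\Delta U + U^{\frac{n-k+2}{n-k-2}-\varepsilon}$ in these coordinates. The expansion of the metric contributes a term $\frac{n-k-2}{2}\big(\sum_{a=1}^k H_{aa}(y)\big)\,\mu\,\partial_{z_{n-1}}$-type correction interacting with the bubble, and $-\varepsilon U^{p}\log U$ contributes the dominant solvability obstruction proportional to $\varepsilon$; balancing these forces the correct scaling $\mu(y)\sim \varepsilon^{?}$ (determined, as in the point case, by matching $\varepsilon|\log\mu|$ against the curvature term) and, crucially, requires $\sum_{a=1}^k H_{aa}(y)<0$ so that the reduced energy has a critical point — this is exactly where the hypothesis enters. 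I would then improve $U$ by adding lower-order corrections $\mu^2$-bubbles solving linearized equations with the curvature forcing terms, to push the error down to an acceptable size in a weighted $L^\infty$ (or $L^q$) norm along $K$.

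The linear theory is the technical heart: I need invertibility, with norm bounds uniform in $\varepsilon$ and in the parameters, of the linearized operator $L=\Delta + p\,U^{p-1}$ acting on the orthogonal complement of the approximate kernel. The kernel of the limiting operator $\Delta + \frac{n-k+2}{n-k-2}\,w_{n-k}^{\frac{4}{n-k-2}}$ on $\mathbb R^{n-k}$ is spanned by $\partial_{z_i}w_{n-k}$ and the dilation mode $z\cdot\nabla w_{n-k}+\frac{n-k-2}{2}w_{n-k}$; because these now depend on $y\in K$, projecting them out turns the Lyapunov--Schmidt reduced equations into a system of second-order elliptic equations on $K$ for the parameter functions $\mu(y),\Phi(y)$, whose linearization is governed by the Jacobi-type operator of the minimal submanifold $K$. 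Nondegeneracy of $K$ is precisely what makes this reduced linear operator invertible. However, this operator has a nontrivial spectrum accumulating at zero as $\varepsilon\to0$ (small divisors coming from the eigenvalues of the Jacobi operator crossing resonances as $\mu\to0$), so one cannot solve for all small $\varepsilon$; instead one invokes a measure-theoretic/Whitney-type argument to find a sequence $\varepsilon=\varepsilon_j\to0$ avoiding the resonant set — this is why the statement gives only a sequence. I expect this resonance analysis, together with establishing the uniform linear estimate in the weighted norms that see both the concentration scale and the $K$-directions, to be the main obstacle; the final step is a standard fixed-point argument in the complement space followed by solving the (finite-dimensional-along-$K$) bifurcation equation, and then verifying the measure convergence $|\nabla u_\varepsilon|^2\rightharpoonup S_{n-k}^{(n-k)/2}\delta_K$ by a direct computation of the energy concentrated in tubular neighborhoods of $K$.
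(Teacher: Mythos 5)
Your overall plan---Fermi coordinates around $K\subset\partial\Omega$, a scaled $(n-k)$-dimensional bubble transversal to $K$, a reflected term to enforce the Dirichlet condition, Lyapunov--Schmidt reduction onto the kernel elements, parameters $\mu(y)$ and $\Phi(y)$ solving reduced equations on $K$ with the Jacobi operator appearing, and a sequence $\e_j\to 0$ to avoid resonances---is essentially the paper's strategy, and the identification of $\sum_a H_{aa}<0$ as the condition making the scaling parameter positive is correct in substance (in the paper, $\mu_0 = -\big(A_2/A_1\big)^{(N-1)/(N-2)}\frac{A_3}{A_6}\frac{1}{H_{aa}}$). However, the claimed balance ``$\e|\log\mu|$ against the curvature term'' that supposedly fixes $\mu$ is not what happens: the rate $\rho=\e^{(N-1)/(N-2)}$ is imposed from the start, and $\mu_0$ is determined by an algebraic balance between the boundary-reflection term $pU^{p-1}\bar U$ and the curvature term $\rho\,\mu\,H_{aa}\,\partial_N U$; the $\log\mu$ contribution only enters the third equation of the system, which fixes the coefficient $e_0$ of the unstable eigenmode $Z_0$, not $\mu_0$.

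There is also a genuine conceptual gap: you attribute the resonance (and hence the restriction to a sequence $\e_j$) to eigenvalues of the Jacobi operator of $K$ accumulating at zero. That cannot be the mechanism, because nondegeneracy of $K$ is \emph{exactly} the hypothesis that the Jacobi operator is invertible, and indeed in the paper this invertibility is used to solve for the tangential displacement $\bar d$ with uniform bounds---no small divisors arise there. The resonance comes from an entirely different source: the limiting profile $w_{n-k}$ is a Mountain-pass (Morse-index-one) solution, so the transversal linearized operator $\Delta_\xi+pU^{p-1}$ has one positive eigenvalue $\lambda_1>0$ with eigenfunction $Z_0$. After projecting the equation onto $Z_0$ one obtains, on $K$, an operator of the form $\Delta_K e + D_1\lambda_1 e + \ldots$, and it is the negative eigenvalues of $\Delta_K$ competing with the fixed positive number $D_1\lambda_1$ (under the $\e$-dependent scaling) that create near-resonances: the Morse index of the solution diverges as $\e\to 0$, and only along a carefully chosen sequence is the operator uniformly invertible. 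Consequently, your ansatz is missing an essential term: the paper's approximation contains an explicit multiple $\e\, e_\e(y)\,\chi_\e\,Z_0$ of the unstable eigenmode with a scalar parameter $e_\e$ on $K$ that must be solved for alongside $\mu$ and the displacement. Without this extra degree of freedom the reduced system cannot be closed; and because the bubble decays only polynomially (unlike the exponentially decaying profiles in singularly perturbed Neumann problems), a high-order Picard iteration is needed to drive the error down to $O(\e^{I+2})$ before the fixed-point argument can absorb the large ($\sim\rho^{-\max\{2,k\}}$) norm of the inverse.
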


\medskip
The condition $n-k\ge 7$ appears also in many previous works like \cite{dmpa}, it is a technical condition that seems essential
for our  method to work but we believe the phenomenon described
should also be true for lower co-dimensions.  We also point out that the resonance phenomenon has already
been found  in the analysis of higher dimensional
concentration in other elliptic boundary value problems, in
particular for  Neumann singular perturbation problem in
\cite{mmah,mm1,mm2,m} and nonlinear Schr\"odinger equations on compact Riemannian manifolds without boundary or in $\R^N$, see  \cite{dkw}, \cite{masayao}.

\medskip
The solution predicted in Theorem \ref{teo1} can be described as follows:
points $x\in \R^n$ near $K     $,  can be
described as
$$
x= q + z  , \quad {\mbox {for}} \quad q \in K     , \quad |z| = \dist (x,K     ).
$$
At main order our solution will look like
\begin{equation}
\label{vancu}
u_\ve(x) \, \sim  \,\mu_\ve^{-{n-2 \over 2} } (q)  w_{n-k } \left( {
x-d_\ve  (q)  \over \mu_\ve (q)}   \right) ,
\end{equation}
as  $\ve \to 0^+$ , where  $w_{n-k} $ is the {\it standard bubble} in dimension $n-k$,
\begin{equation}
w_{n-k} (x) =\left ( \frac {c_{n-k} } {1+ |x|^2} \right )^{\frac {n-k-2}2 },
\qquad c_{n-k} = ((n-k ) (n-k-2))^{1\over n-k-2}, \label{wnk}
\end{equation}
a radial solution of the corresponding limit prblem in $\R^{n-k}$
\begin{equation}\label{limprok}
\Delta w + w^{\frac{n-k+2}{n-k-2}}  = 0 \quad\hbox{in } \R^{n-k} .
\end{equation}
In \eqref{vancu}, $\mu_\e (q)$ is a strictly positive scalar function that takes into account the invariance of \eqref{limprok} under scaling, while $d_\e (q)$ is a vector function, with values in $\R^{n-k}$, that describes the deviation of the center of the bubble in \eqref{vancu}  from the manifold $K$.

\medskip
The first main ingredient in proving our main theorem is the construction
of a very accurate approximate solution in powers of $\e$ and  $\rho=\ve^{\frac{N-1}{N-2}}$, in a neighborhood of the scaled submanifold $K_\rho=\rho^{-1}K$.
It is worth-mentioning that concentration at higher dimensional sets for some related problem with Neumann boundary conditions or on manifolds has been extensively studied in the last decade, see \cite{dpv,demamu,dmm,mmah,masayao} and some references therein. In most of the above  mentioned problems the profile has an exponential decay which is crucial in the construction of very accurate approximate solutions via an iterative scheme of Picard's type. Here instead the profile \equ{wnk} has a polynomial decay and henceforth much more refined estimates are needed to perform again an iterative procedure to improve the approximation.
Another issue is that the profile $U:= w_{n-k}$ copied and translated along $K$, as described in \eqref{vancu},  does not satisfy zero Dirichlet boundary conditions. Hence one needs  to introduce a function $\bar U$, see \eqref{ubar} for its definition, to adjust the boundary conditions, and take $U-\bar U$ to be the first approximation.  A third observation here is that since the limit problem is critical for dimension $n-k$, the linearized operator have a nontrivial kernel due to  invariances of the equation under translations and dilations. This amounts  to define some parameter functions $\mu_\e$ and some smooth normal sections $d_\e$ to guarantee the solvability of some projected problems. The condition  $
\sum_{a=1}^k H_{aa}(q)<0\quad \mbox{for\ all}\ \ q\in K,
$
that appears in the main Theorem \ref{teo1} is in fact imposed to guarantee the positivity of the main term of the dilation parameter $\mu_\e$. A more subtle issue we have to take care is the fact that the $(n-k)$-dimensional profile  is an
unstable solution to \eqref{limprok}. Indeed $w_{n-k}$
 is a Mountain-pass type solution (of Morse index one).  The linearized operator about this profile has one negative eigenvalue and as the concentration parameter $\e$
becomes smaller and smaller, this negative eigenvalue generates more and more unstable directions. This is the origin of a resonance phenomena and the reason why our result is valid only for a sequence $\e = \e_j \to 0^+$. The Morse index of our solutions diverges as $\e \to 0$.

\medskip
Once a very accurate approximate solution is constructed we can
built the desired solution by linearizing the main equation
around this approximation. The associated linear operator turns out
to be invertible with inverse controlled in a suitable norm by
certain large negative power of $\e$, provided that $\e$ remains
away from certain critical values where resonance occurs. The
interplay of the size of the error and that of the inverse of the
linearization then makes it possible a fixed point scheme.


\bigskip

The paper is organized as follows: We first introduce some
notations and conventions and we  expand the
coefficients of the metric near $K$ using geodesic normal coordinates (Fermi coordinates). We then expand  the Laplace-Beltrami operator.  Section \ref{s:aprsol} will be mainly
devoted to the construction of a local approximate solution. To perform this construction we need a solvability  theory and a-priori estimates for a certain linear operator,  which is developed in Section \ref{linearas}. In Section \ref{exct} we define globally the approximation and we write the solution to our problem as the sum of the global approximation plus a remaining term. Thus we express our original problem as a non linear problem in the remaining term and we prove our main Theorem.
To solve such problem, we need to understand the invertibility properties of another linear operator.   The Appendix in Section \ref{secapp} is devoted to prove some technical facts.

\

For brevity, most of the arguments that has been already  used in some previous works will be omitted here, referring the reader to precise references.

\

\setcounter{equation}{0}
\section{Setting up the problem in geodesic normal  coordinates}\label{sec4}

In this section we first introduce Fermi coordinates near a
$k$-dimensional submanifold of $\pa \O \subset \R^{n}$ (with
$n=N+k$) and we expand the coefficients of the metric in these
coordinates. We will omit details here referring to   \cite{demamu,dmm,mmah}. We then express our main equation in these Fermi coordinates.

\medskip
\subsection{ Notation and conventions}
\noindent  Dealing with coordinates, Greek letters like $\a, \b,
\dots$, will denote indices varying between $1$ and $n-1$, while
capital letters like $A, B, \dots$ will vary between $1$ and $n$;
Roman letters like $a$ or $b$ will run from $1$ to $k$, while
indices like $i, j, \dots$ will run between $1$ and $N-1:= n - k -
1$. $\xi_{1}, \dots, \xi_{N-1}, \xi_{N}$ will denote coordinates
in $\R^{N}=\R^{n-k}$, and they will also be written as $\bar
\xi=(\xi_{1}, \dots, \xi_{N-1})$, $\xi=(\bar \xi,\xi_N)$.
The manifold $K$ will be parameterized with coordinates
$y = (y_1, \dots, y_k)$. Its dilation $K_\rho := \frac 1 \rho K$ will be
parameterized by coordinates $z=(z_1, \dots, z_k)$ related to the
$y$'s simply by $y = \rho z$, where $\rho=\ve^{\frac{N-1}{N-2}}$.
 Derivatives with respect to the variables $y$, $z$ or
$\xi$ will be denoted by $\pa_{y}$, $\pa_z$, $\pa_\xi$, and for
brevity sometimes we might use the symbols $\pa_{a}$, $\pa_{\ov a}$
and $\pa_i$ for $\pa_{y_a}$, $\pa_{z_a}$ and $\pa_{\xi_i}$
respectively.

\medskip
\subsection{Local coordinates expansion of the metric}\label{ss:fc}
Let $K$ be a $k$-dimensional submanifold of $(\partial\O,\ov g)$
($1\le k\le N-1$), where $\bar g$ is the induced metric on $\partial \Omega$ of the standard metric in $\R^n$. We choose
along $K$ a local orthonormal frame field $((E_a)_{a=1,\cdots
k},(E_i)_{i=1,\cdots, N-1})$ which is oriented. At points of $K$, we have the natural splitting
$T\pa \O=T K \oplus N K$ where $T K$ is the
tangent space to $K$ and $N K$ represents the normal bundle, which
are spanned respectively by $(E_a)_a$ and $(E_j)_j$.

We denote by $\nabla$ the connection induced by the metric $\ov{g}$ and by
$\nabla^N$ the corresponding normal connection on the normal bundle.
Given $q \in K$, we use some geodesic coordinates $y$ centered at
$q$. We also assume that at $q$ the normal vectors $(E_i)_i$, $i =
1, \dots, n$, are transported parallely (with respect to $\nabla^N$)
through geodesics from $q$, so in particular
\begin{equation}\label{eq:parall}
    \ov g\left(\nabla_{E_a}E_j\,,E_i\right)=0  \quad \hbox{ at } q,
    \qquad \quad i,j = 1, \dots, n, a = 1, \dots, k.
\end{equation}
In a neighborhood of $q$ in $K$, we consider normal geodesic
coordinates
\[
f(y) : = \exp^K_q (y_a\, E_a), \qquad y := (y_{1}, \ldots, y_{k}),
\]
where $\exp^K$ is the exponential map on $K$ and summation over repeated
indices is understood. This yields the coordinate vector fields
$X_a : = f_* (\del_{y_a})$. We extend the $E_i$ along each $\gamma_E(s)$ so that they are parallel
with respect to the induced connection on the normal bundle $NK$.
This yields an orthonormal frame field $X_i$ for $NK$ in a neighborhood of
$q$ in $K$ which satisfies
\[
\left. \nabla_{X_a} X_i \right|_q \in T_q K.
\]
A coordinate system in a neighborhood of $q$ in $\partial\Omega$ is now defined by
\begin{equation}\label{eqF}
F(y,\bar x) := \exp^{\partial\Omega}_{f(y)}( x_i \, X_i), \qquad
(y,\bar x) :=( y_{1}, \ldots, y_{k},x_1, \ldots, x_{N-1}),
\end{equation}
with corresponding coordinate vector fields
\[
X_i : = F_* (\del_{x_i}) \qquad \mbox{and} \qquad  X_a : = F_*
(\del_{y_a}).
\]
By our choice of coordinates, on $K$ the metric $\ov{g}$ splits in
the following way
\begin{equation}\label{eq:splitovg}
    \ov g(q) = \ov g_{ab}(q)\,d y_a\otimes d y_b+\ov
g_{ij}(q)\,dx_i\otimes dx_j, \qquad \quad q \in K.
\end{equation}
We denote by $\Gamma_a^b(\cdot)$ the 1-forms defined on the normal
bundle, $NK$, of $K$ by the formula~
\begin{equation}\label{eq:Gab}
\ov g_{bc} \Gamma_{ai}^c:=  \ov g_{bc} \Gamma_a^c(X_i)=\ov g(\nabla_{X_a}X_b,X_i) \quad \hbox{at } q=f(y).
\end{equation}
Note that
$ K \hbox{ is minimal }$ if and only if $\sum_{a=1}^k\G^a_a(E_i)
    = 0
$ for any $i = 1, \dots, N-1$.

Define $q=f(y)=F(y,0)\in K$ and let $(\tilde g_{ab}(y))$ be the induced metric on $K$.
When we consider the metric coefficients in a neighborhood of $K$,
we obtain a deviation from formula \eqref{eq:splitovg}:
\[
\begin{array}{rllll}\ov g_{ij}&=\delta_{ij}+\frac{1}{3}\,R_{istj}\,x_s\,x_t\,
+\,{\mathcal O}(|x|^3);\quad
\ov g_{aj}=
{\mathcal O}(|x|^2);\\[3mm]
\ov g_{ab}&=\tilde{g}_{ab}-\bigg\{\tilde{g}_{ac}\,\Gamma_{bi}^c+\tilde{g}_{bc}\,\Gamma_{ai}^c\bigg\}\,x_i
+\bigg[R_{sabl}+\tilde g_{cd}\Gamma_{as}^c\,\Gamma_{bl}^d \bigg]x_s x_l+\,{\mathcal O}(|x|^3).
\end{array}
\]
Here $a=1,...,k$ and any $i,j=1,...,N-1$, and $R_{\a\b\g\d}$ the components of the curvature tensor
with lowered indices, which are obtained by means of the usual ones
$R_{\b\g\d}^\s$ by~
\begin{equation}
\label{ctens} R_{\a\b\g\d}=\ov
g_{\a\s}\,R_{\b\g\d}^\s.\end{equation}
The proof of these facts can be found in  Lemma 2.1 in \cite{demamu}, see also
  \cite{chavel,mmp,mazzeopacard}.

\medskip
Next we introduce a parametrization of a neighborhood in $\O$ of $ q
\in \pa \O$ through the map $\U$ given by
\begin{equation}\label{eq:fe}
    \U(y, x) = F( y, \bar x) +
x_N \nu(y, \bar x), \qquad x = (\bar x,x_N) \in \R^{N-1} \times \R,
\end{equation}
where $F$ is the parametrization introduced in \equ{eqF} and
$\nu(y,\bar x)$ is the inner unit normal to $\partial \O$ at
$F(y, \bar x)$. We have
$$
\frac{\partial \U}{\partial y_a} = \frac{\partial F}{\partial
y_a}(y, \bar x) +  \zn \frac{\partial \nu}{\partial y_a}(y, \bar x);
\qquad \qquad \frac{\partial \U}{\partial x_i} = \frac{\partial
F}{\partial x_i}(y, \bar x) +\zn \frac{\partial \nu}{\partial
x_i}(y, \bar x).
$$
Let us define the tensor matrix ${H}$ to be given by
\begin{equation}\label{eq:dn}
    d \nu_x [v] =  -{H}(x)[v].
\end{equation}
We thus find
\begin{equation}\label{eq:dfe1}
   \frac{\partial \U}{\partial y_a} = \left[ Id -  \zn
   {H}(y, \bar x) \right] \frac{\partial
F}{\partial y_a}(y, \bar x); \quad \mbox{and}\ \  \frac{\partial \U}{\partial x_i} = \left[ Id - \zn
  {H}(y, \bar x) \right] \frac{\partial
F}{\partial x_i}(y, \bar x).
\end{equation}
Differentiating $\U$ with respect to $\zn$ we also get
$
  \frac{\partial \U}{\partial \zn} = \nu(y, \bar x)
$.

Hence, letting $g_{\a \b}$ be the coefficients of the flat metric
$g$ of $\R^{N+k}$ in the coordinates $(y, \bar x,\zn)$, with easy
computations we deduce for $\tilde{y} = (y, \bar x)$ that
$$
    g_{\a\b} (\tilde{y},\zn) = \ov{g}_{\a\b} ( \tilde{y})
  - \zn \left( H_{\a\d} \ov{g}_{\d \b} + H_{\b \d} \ov{g}_{\d \a}
  \right) ( \tilde{y}) +  x_N^2 H_{\a \d} H_{\s \b}
  \ov{g}_{\d \s} (\tilde{y});  \quad  g_{\a N} \equiv 0; \quad \qquad g_{NN} \equiv 1.
$$
In the above expressions, with $\alpha$ and $\beta$ we denote any
index of the form $a=1, \ldots , k$ or $i=1, \ldots , N-1$.

For the metric $g$ in the above coordinates $(y,\bar x, x_N)$
we have the expansions
\begin{align*}
 g_{ij} =&\delta_{ij} - 2  x_N H_{ij} +
\frac{1}{3} \,R_{istj}\,x_s\,x_t +   x_N^2 (H^2)_{ij}
\,+\,{\mathcal O}((|x|^3), \quad 1\le i,j\le N-1;\\[3mm]
 g_{aj} =&-  x_N \bigg(H_{aj} + \tilde g_{ac}H_{cj}\bigg)+{\mathcal O}(|x|^2),\quad 1\le a\le k , \, 1\le j\le  N-1;\\[3mm]
 g_{ab} =&\tilde{g}_{ab}-\bigg\{\tilde{g}_{ac}\,\Gamma_{bi}^c+\tilde{g}_{bc}\,\Gamma_{ai}^c\bigg\}\,x_i
 - x_N\,\bigg\{ H_{ac}\,\tilde g_{bc}+ H_{bc}\,\tilde g_{ac}\bigg\}\\[3mm]
 &+ \bigg[R_{sabl}+\tilde g_{cd}\Gamma_{as}^c\,
\Gamma_{dl}^b \bigg]x_s x_l+x_N^2 (H^2)_{ab}\\[3mm]
&+ x_N\,x_k\bigg[H_{ac}\big\{\tilde g_{bf}\Gamma_{ck}^f+\tilde g_{cf}\Gamma_{bk}^f\big\}+
H_{bc}\big\{\tilde g_{af}\Gamma_{ck}^f+\tilde g_{cf}\Gamma_{ak}^f\big\}
\bigg]  + {\mathcal O}(|x|^3), \  1\le a,b\le k;
\\[3mm]
 g_{a N}  \equiv& 0, \quad a=1, \ldots , k; \qquad g_{i N} \equiv 0, \quad i=1, \ldots , N-1; \qquad  g_{NN} \equiv 1.
\end{align*}
In the above expressions $H_{\a  \b} $ denotes the components of the
matrix tensor ${H}$ defined in \equ{eq:dn}, $R_{istj}$ are the
components of the curvature tensor as defined in \equ{ctens},
$\Gamma_{ai}^b$ are defined in \equ{eq:Gab} and  $\tilde g_{ab}$ is the induced metric on $K$.

\medskip
Once we have the expression of the metric, it is a matter of computation to derive the Laplace Betrami operator. We shall do that in expanded and translated variables.

Let  $(y,x)\in \R^{k+N}$ be the local
coordinates along $K$ introduced in \equ{eq:fe}.
We define $\rho=\ve^{\frac{N-1}{N-2}}$ and we let $\mu_\ve $ be a positive smooth function  defined on $K$  and $d_{1,\e},\ldots,d_{N,\ve}:  K\longrightarrow\,  \R$ be smooth functions. We next introduce new functions $\tilde{\mu}_\ve$ and  $\tilde{d}_{\ell,\ve}$ so that
\begin{eqnarray}\label{tildemu-tilded}
\tilde{\mu}_\ve=\rho\mu_\ve,\quad
\hbox{and}\quad
\tilde{d}_\ve=(\ve^2\bar{d_\ve},\tilde{d}_{N,\ve}),\quad \mbox{with}\ \ \bar{d_\ve}=( d_{1,\ve},\ldots, d_{N-1,\ve}),\quad \tilde{d}_{N,\ve}=\ve d_{N,\ve}.
\end{eqnarray}
We next introduce the following change of variables
$z={y\over \rho}\in K_\rho:=\frac1\rho\,K$ and $\xi={{x-\tilde{d}_\ve}\over{\tilde \mu_\e}} \in \R^N$ and as before we write $\xi=(\bar \xi,\xi_N)$ with
\begin{equation}\label{b0}\ov\xi=\frac{\ov x-\ve^2 \bar{d_\ve}  }{\rho\mu_{\varepsilon}}, \quad \xi_N=\frac{x_N-\ve d_{N,\ve}}{\rho\mu_{\varepsilon}}.
\end{equation}
We now define the function $v$ by
\begin{equation} \label{b}
u(z,\ov x, x_N)=(1+\alpha_\ve)\,\tilde{\mu}_{\varepsilon}^{-\frac{N-2}{2}} \, v\left(z, \,
\xi \right).
\end{equation}
 In (\ref{b}), $\alpha_\ve$ is some parameter which has to be chosen so that
$$
\Delta((1+\alpha_\ve)U)+ \rho^{\frac{N-2}{2}\ve}\left( (1+\alpha_\ve)U \right)^{p-\ve}=0\quad \hbox{in }\, \R^N
$$
where $U$ is standard bubble in $\R^N$ defined in \eqref{wn} ($U=w_N$).
This parameter  can be computed explicitly as
$$
\alpha_\ve=\rho^{\frac{(N-2)^2}{8-2\ve(N-2)}\ve}-1.
$$
Let us mention here that with the above change of variables the functions $\tilde \mu_\e$ and $\tilde{d}_\ve$ depends slowly on the variable $z$.
To emphasize the dependence of the above change of variables on
$\mu_\ve$ and $d_\e=(\bar{d_\ve},d_{N,\ve})$, we will use the notation
\begin{equation} \label{defTT}
u={\mathcal T}_{\mu_\ve , d_\e} (v)  \quad \Longleftrightarrow u
\quad {\mbox {and}} \quad v \quad {\mbox {satisfy \equ{b}}}.
\end{equation}

We assume now that the functions $\mu_\e$ and
$d_\e$ are uniformly bounded, as $\ve \to 0$, on $K$. Since the original variables
$(y,\bar{x},x_N)\in \R^{k}\times \R^{N-1}\times \R_+$ are local coordinates along $K$, we let the
variables $(z, \xi )$ vary in the set $\DD $ defined by
\begin{equation}
\label{defD} \DD  = \left\{ (z, \bar \xi , \xi_N ) \, : \, \rho z \in K,
\quad |\bar \xi | <{\delta \over \rho }, \quad -\frac{ \tilde{d}_{N,\ve}}{  \tilde{\mu}_{\varepsilon}}< \xi_N < {\delta
\over \rho } \right\}
\end{equation}
for some  fixed positive number $\delta$. We will also use
the notation $ \DD  = K_\rho \times \hat \DD $, where
$$
\hat \DD  = \left\{ (\bar \xi , \xi_N ) \, : \, |\bar \xi | <{\delta \over \rho }, \quad -\frac{ \tilde{d}_{N,\ve}}{  \tilde{\mu}_{\varepsilon}}< \xi_N < {\delta
\over \rho } \right\}.
$$

Using the expansions of the metric we can expand the Laplace Beltrami
operator in the new variables $(z, \xi )$ in terms  of parameter functions $\tilde\mu_\ve (y)$ and
$\tilde d_\ve (y)$.
This is the content of next Lemma, whose proof can be seen in \cite[Lemma 3.3]{demamu}.

\begin{lemma} \label{scaledlaplacian}
Given  the change of variables defined  in \eqref{b}, the following
expansion for the Laplace Beltrami operator holds true
\begin{equation}
\label{lap1}
 (1+\alpha_\ve)^{-1}\mu_\ve^{{N+2 \over 2}} \Delta u =  {\mathcal A}_{\mu_\ve , d_\e } (v) :=  {\mu_\ve^2}  \D_{K_\rho} v +
\Delta_{\xi} v + \sum_{\ell=0}^5 {\mathcal A}_\ell v + B(v).
\end{equation}
Above, the expression ${\mathcal A}_k$ denotes the following
differential operators
\begin{align*}
{\mathcal A}_0 v
 = &  \tilde{\mu}_\ve D_{\ov \xi}\, v \,[\D_K \tilde{d}_\e ]  -
\tilde{\mu}_\ve \,\D_K
\tilde{\mu}_\ve \,\left( \gamma
v + D_\xi v \, [\xi] \right)    \\[3mm]
 & +   \,| \nabla_K\tilde{\mu}_\ve|^2 \left[ D_{\xi\xi} v \, [
\xi]^2 +
2 (1+\gamma ) D_\xi v [\xi] + \gamma (1+ \gamma ) v \right]\\[3mm]
    & -   \nabla_K \tilde{\mu}_\ve \,\cdot\,\left\{ 2D_{\ov\xi\,\ov\xi} v[\ov\xi] + N
D_{\ov\xi} v
\right\}\, [\nabla_K  \tilde{d}_\ve ] +   D_{\ov\xi\,\ov\xi} v \,[\nabla_K \tilde{d}_\e]^2\\[3mm]
   &- 2\, \tilde{\mu}_\ve \,\tilde g^{ab}\,\left[  D_\xi (\frac{1}{\rho}\del_{\bar a} v ) [\del_b
\tilde{\mu}_\ve \xi] + D_{  \xi} (\frac{1}{\rho}\del_{\bar a} v )[ \del_b
\tilde{d}_\e ] +
 \gamma  \del_a\tilde{\mu}_\ve\, (\frac{1}{\rho}\del_{\bar b} v )\right],
\end{align*}
where we have set $\gamma=\frac{N-2}{2}$,
\smallskip
\begin{align*}
 {\mathcal A}_1 \, v   = & \,\sum\limits_{i,j} \bigg[ 2(\tilde{\mu}_\ve \xi_N+\tilde{d}_{N,\ve})
H_{ij}   -{1\over 3}\,\,\sum\limits_{m,l} R_{mijl}
(\tilde{\mu}_\ve \xi_m + \tilde{d}_{m,\e} )
(\tilde{\mu}_\ve \xi_l + \tilde{d}_{l,\e} )  \\[3mm]
   &+ (\tilde{\mu}_\ve \xi_N+\tilde{d}_{N,\ve}) \,Q(H)_{ij}+(\tilde{\mu}_\ve \xi_N+\tilde{d}_{N,\ve})\,\sum\limits_{l} \mathfrak{D}_{Nl}^{ij}\,( \tilde{\mu}_\ve \xi_l
+\tilde{d}_{l,\e}x )
  \bigg] \del^2_{ij} v,
\end{align*}
where the function $Q(H)_{ij}$ is defined as
\begin{equation*}\label{eq:Qij}
 Q(H)_{ij}=3x_N^2 \,H_{ik}\,H_{kj}+x_N^2\,\bigg( 2\,H_{ia}\,H_{aj}+\tilde g^{ab}\,H_{ia}\,H_{bj} \bigg),
\end{equation*}
and the functions $\mathfrak{D}_{Nk}^{ij}$ are smooth functions of
the variable $z= \frac{y}{\rho}$ and uniformly bounded. Furthermore,
\smallskip
\begin{equation*}
\label{D4} {\mathcal A}_2 v =\tilde{  \mu}_\ve \sum\limits_j \bigg[ \sum_s
\frac23 R_{mssj}+\sum\limits_{m,a,b}
 \big( \tilde g_\e^{ab}\, R_{mabj}- \G_{am}^b
\G_{bj}^a \big)\bigg](\tilde{\mu}_\ve\xi_m+\tilde{d}_{m,\e})
 \del_j v,
\end{equation*}
and
\begin{equation*}
\label{D5} {\mathcal A}_3 v \, =-\tilde{\mu}_\ve \bigg[  {\rm tr }(H)
+(\tilde{\mu}_\ve+\tilde{d}_{N,\ve}) {\rm tr }(H^2)\,  \bigg]
 \del_N v.
\end{equation*}
Moreover
\smallskip
\begin{eqnarray*}
\label{D2} {\mathcal A}_4 v  =
2(\tilde{\mu}_\ve+\tilde{d}_{N,\ve})(H_{aj}+\tilde{g}^{ac}H_{cj})   \left(\frac{\tilde{\mu}_\ve}{\rho} \partial^2_{\bar{a}j}v-\partial_a\tilde{\mu}_\ve D_\xi(\partial_jv)-D_\xi(\partial_jv)[\partial_a \tilde{d}_\ve]+(1+\gamma)\partial_a\tilde{\mu}_\ve\partial_j v\right)
\end{eqnarray*}
and
\smallskip
\begin{equation*}
{\mathcal A}_5 v    =  \left( \sum\limits_{a,j}
\mathfrak{D}_j^a   [\tilde{\mu}_\ve  \xi_j + \tilde{d}_{j,\e}] + (\tilde{\mu}_\ve\xi_N+\tilde{d}_{N,\ve})\,
\mathfrak{D}_N^a   \right)   \left\{ \tilde{\mu}_\ve \left[ -   D_{\ov\xi} v \, [\del_a \tilde{d}_\e ] +
\tilde{\mu}_\ve \del_{\bar a} v -   \del_a \tilde{\mu}_\ve (\gamma v +
D_\xi v \, [\xi] ) \right] \right\}
\end{equation*}
where $\mathfrak{D}_j^a$ and $\mathfrak{D}_N^a$ are smooth functions
of $z= \frac{y}{\rho}$. Finally,
the operator $B(v)$ is defined below,
\begin{align}\label{decbv}
\mathcal{B}(v) =&O \left(   ( \tilde{\mu}_\ve \bar \xi  + \bar{\tilde{d}}_\ve )^2 + (\tilde{\mu}_\ve
\xi_N+\tilde{d}_{N,\ve})(\tilde{\mu}_\ve \bar \xi
+ \bar{\tilde{d}}_\ve )+(\tilde{\mu}_\ve
\xi_N+\tilde{d}_{N,\ve})^2   \right) \times\nonumber\\
 & \times\left( -\frac{N}{2}\,\pa_{\bar a}\tilde{\mu}_\ve\,\pa_lv
+\frac{\tilde{\mu}_\ve}{\ve}\,\pa^2_{\bar al}v
-\pa_{\bar a}\tilde{\mu}_\ve\xi_J\pa^2_{lJ}v-\pa_{\bar a}\Phi^j\pa^2_{lj}v \right)\nonumber\\
 &+O \left( ( \tilde{\mu}_\ve \bar \xi  + \bar{d}_\ve )^3   + (\tilde{\mu}_\ve   \xi_N
+  \tilde{d}_{N,\ve} ) ( \tilde{\mu}_\ve \bar \xi  + \bar{\tilde{d}}_\ve )^2\right.\nonumber\\
 &  \left.+ (\tilde{\mu}_\ve   \xi_N
+  \tilde{d}_{N,\ve} )^2 ( \tilde{\mu}_\ve \bar \xi  + \bar{\tilde{d}}_\ve )  \tilde{d}_\ve |+ (\tilde{\mu}_\ve   \xi_N
+  \tilde{d}_{N,\ve} )^3  \right) \del^2_{ij} v,
\end{align}
where $\tilde{\bar{d}}_\ve =\ve^2\bar{d}_\ve$.
We recall that the symbols $\pa_{a}$, $\pa_{\ov a}$ and $\pa_i$
denote the derivatives with respect to $\pa_{y_a}$, $\pa_{z_a}$ and
$\pa_{\xi_i}$ respectively.
\end{lemma}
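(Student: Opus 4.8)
\textbf{Proof proposal for Lemma \ref{scaledlaplacian}.}

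The plan is to compute the Laplace--Beltrami operator $\Delta_g = \frac{1}{\sqrt{\det g}}\,\partial_\alpha\big(\sqrt{\det g}\,g^{\alpha\beta}\partial_\beta\big)$ in the coordinates $(y,\bar x,x_N)$ of \eqref{eq:fe}, and then track carefully how each piece transforms under the rescaling and translation \eqref{b0}--\eqref{b}. First I would write $\Delta_g u = g^{\alpha\beta}\partial^2_{\alpha\beta}u + \big(\partial_\alpha g^{\alpha\beta} + g^{\alpha\beta}\partial_\alpha\log\sqrt{\det g}\big)\partial_\beta u$, and feed in the block structure $g_{aN}=g_{iN}\equiv 0$, $g_{NN}\equiv 1$ together with the expansions of $g_{ij}$, $g_{aj}$, $g_{ab}$ established just before the statement (which in turn rest on Lemma~2.1 of \cite{demamu}). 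Inverting the metric to the required order in $|x|$, one gets $g^{ij}=\delta_{ij}+2x_N H_{ij}-\tfrac13 R_{istj}x_sx_t+x_N^2(\dots)+\mathcal O(|x|^3)$, $g^{aj}=\mathcal O(|x|)$ correction terms, $g^{ab}=\tilde g^{ab}+(\dots)x_i+(\dots)x_Nx_k+\mathcal O(|x|^3)$, and $g^{NN}=1$; the curvature identity \eqref{ctens} is used to keep indices consistent. The leading part $\delta_{ij}\partial^2_{ij}+\partial^2_{NN}$ produces the Euclidean Laplacian in the $x$ variables, and $\tilde g^{ab}\partial^2_{ab}$ produces $\Delta_K$; everything else is an explicit, if lengthy, remainder organized by powers of $x$ and by whether it carries a $y$-derivative, an $x$-derivative, or is zeroth order.

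Next I would perform the change of variables. Under $z=y/\rho$ one has $\partial_{y_a}=\rho^{-1}\partial_{z_a}$, so $\tilde g^{ab}\partial^2_{y_ay_b}$ becomes $\rho^{-2}\Delta_{K_\rho}$; after multiplying the whole operator by $(1+\alpha_\ve)^{-1}\mu_\ve^{(N+2)/2}$ and recalling $\tilde\mu_\ve=\rho\mu_\ve$, this is exactly the $\mu_\ve^2\Delta_{K_\rho}v$ term. Under $\xi=(x-\tilde d_\ve)/\tilde\mu_\ve$ with $\tilde\mu_\ve,\tilde d_\ve$ depending on $z$ (hence weakly on $y$), the chain rule generates: the clean $\Delta_\xi v$ from the Euclidean $x$-Laplacian; the whole family of terms in $\mathcal A_0$ coming from the $z$-dependence of $\tilde\mu_\ve$ and $\tilde d_\ve$ hitting the $\xi$-variable through $\partial_{y_a}$ and $\Delta_K$ (this is where $\gamma=\tfrac{N-2}{2}$ enters, from differentiating the prefactor $\tilde\mu_\ve^{-(N-2)/2}$ in \eqref{b}); and substitution $x_i=\tilde\mu_\ve\xi_i+\tilde d_{i,\ve}$, $x_N=\tilde\mu_\ve\xi_N+\tilde d_{N,\ve}$ into the metric-remainder coefficients, which after collecting by the type of surviving derivative gives $\mathcal A_1$ (second $\xi$-derivatives, from the $g^{ij}$ corrections), $\mathcal A_2$ (first $\xi$-derivatives, from $\partial_\alpha g^{\alpha\beta}+g^{\alpha\beta}\partial_\alpha\log\sqrt{\det g}$ acting through the $x_m$-dependence), $\mathcal A_3$ (the $\partial_N v$ term carrying $\operatorname{tr}H$ and $\operatorname{tr}H^2$, from $\partial_{x_N}\log\sqrt{\det g}$), $\mathcal A_4$ (the mixed $g^{aj}$ block), and $\mathcal A_5$ (lower-order pieces with the smooth coefficients $\mathfrak D_j^a,\mathfrak D_N^a$). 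All cubic-and-higher remainders in $|x|$, re-expressed in $\xi$, are swept into $B(v)$ with the schematic bound \eqref{decbv}.

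The main obstacle is purely organizational bookkeeping rather than any conceptual difficulty: one must (i) invert the metric to the correct order while respecting the non-diagonal $g_{aj}=\mathcal O(x_N)+\mathcal O(|x|^2)$ coupling, (ii) expand $\log\sqrt{\det g}$ and its derivatives consistently to the same order, and (iii) keep scrupulous track of the powers of $\rho$, $\tilde\mu_\ve$ and the $\ve$-weights built into $\tilde d_\ve=(\ve^2\bar d_\ve,\ve d_{N,\ve})$ so that each term lands in the stated group with the stated homogeneity. Since $\tilde\mu_\ve$ and $\tilde d_\ve$ depend on $z$ only, every $y$-derivative of them carries an extra $\rho^{-1}$ which is precisely compensated, and this is the one place where a sign or power slip would be easy; I would double-check it against the $k=1$ computation in \cite{dmpa} and the general computation in \cite[Lemma 3.3]{demamu}, to which the statement already defers. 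Given those references the proof reduces to reproducing that computation in the present notation, so I would simply cite \cite{demamu} for the details and record here only the structure of the terms.
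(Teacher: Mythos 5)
The paper gives no proof of this lemma---it simply cites \cite[Lemma 3.3]{demamu}---and your proposal, which sketches the standard computation (expand $\Delta_g$ in Fermi coordinates using the block structure, invert the metric to the needed order, perform the rescaling, collect terms by derivative type, sweep the rest into $B(v)$) and then defers to the same reference, takes essentially the same approach. One small point worth flagging as you run the arithmetic in your second paragraph: for $\tilde g^{ab}\partial^2_{y_ay_b}$ to scale to $\mu_\ve^2\Delta_{K_\rho}v$ and for $\Delta_x$ to scale to $\Delta_\xi v$, the normalizing prefactor in \eqref{lap1} has to be $(1+\alpha_\ve)^{-1}\tilde\mu_\ve^{(N+2)/2}$ rather than $(1+\alpha_\ve)^{-1}\mu_\ve^{(N+2)/2}$ (recall $\tilde\mu_\ve=\rho\mu_\ve$); with $\mu_\ve^{(N+2)/2}$ a stray power $\rho^{-(N+2)/2}$ survives, so the paper's displayed prefactor is most likely a typo, and your stated conclusion implicitly assumes the corrected one.
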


\subsection{Expressing the equation in coordinates}

We recall that we want to find a solution to  the problem
\begin{equation}\label{eq:pe1}
     \Delta u +u_+^{p-\ve} =0  \text{ in } \O , \quad
    u = 0  \text{ on } \partial \O,
\end{equation}
where $p=\frac{N+2}{N-2}$ with $N=n-k$.

After performing the change of variables in
\eqref{b}, the original equation in $u$  reduces locally close to
$K_\rho = {K\over \rho}$ to the following equation in $v$
\begin{equation}
\label{adesso}
 -{\mathcal A}_{\mu_\ve , d_\e } v  -  \mu_\ve^{\frac{N-2}{2}\ve}v^{p-\ve} =0 ,
\end{equation}
where ${\mathcal A}_{\mu_\ve , d_\ve }$ is defined in \eqref{lap1}
and $p={N+2 \over N-2}$. We denote by ${\mathcal S}_\ve$ the
operator given by \equ{adesso}, namely
\begin{equation}
\label{Sep} {\mathcal S}_\ve (v ) := -{\mathcal A}_{\mu_\ve ,
d_\e } v -  \mu_\ve^{\frac{N-2}{2}\ve}v^{p-\ve}.
\end{equation}
Recalling the definitions $\tilde{\mu}_\ve=\rho\mu_\ve$,  $\tilde{d}_\ve=(\tilde{\bar{d}}_\ve,\tilde{d}_{N,\ve})=(\ve^2\bar{d}_\ve,\ve d_{N,\ve})$ with $\bar{d}_\ve=(d_{1,\ve},\ldots,d_{N-1,\ve})$ in Lemma \ref{scaledlaplacian}, we get the following result which gives the expansion of ${\mathcal S}_\ve (v )$ in powers of $\e$, $\rho$ and in terms of the real function $\mu_\e$, $d_{N,\e}$ and the normal section $\bar d_\ve$.

\begin{lemma} \label{scaledlaplacianco}
It holds that
\begin{align}
\label{lap1a}
 {\mathcal S}_\ve (v ) =&-  {\mu_\ve^2}  \D_{K_\rho} v -
\Delta_{\xi} v-\mu_\ve^{\frac{N-2}{2}\ve}v^{p-\ve} -\ve\, 2 d_{N,\ve} H_{ij}\partial_{ij} v \nonumber\\
 &- \ \rho\, \{2\mu_\ve \xi_N H_{ij}\partial_{ij} v-\mu_\ve tr(H)\partial_N v\}\\
 &-\ve^2\, \mathcal{S}_1(v )-\ve\rho\, \mathcal{S}_2(v )-\rho^2\mathcal{S}_3(v)-\ve^3\mathcal{S}_4(v )-\ve^2\rho\mathcal{S}_5(v )-\ve^4\mathcal{S}_6(v )-B(v),\nonumber
\end{align}
where the terms $\mathcal{S}_j(v)$ are given by
\begin{eqnarray*}
\label{lap1as14}
 &&\mathcal{S}_1(v) =   |\nabla_K d_{N,\ve}|^2 \partial_{NN}^2v+d_{N,\ve}^2Q(H)_{ij}\partial_{ij}^2 v-2d_{N,\ve}(H_{aj}+\tilde{g}^{ac}H_{cj})[\partial_ad_{N,\ve}\partial^2_{Nj}v-\frac{1}{\ve}\mu_\ve \partial_{\bar{a}j}^2v], \nonumber\\[3mm]
&&\mathcal{S}_2(v) = -\mu_\ve \partial_N v[\triangle_K d_{N,\ve}]+2(1+\gamma)\nabla_K\mu_\ve \partial_N v[\nabla_K d_{N,\ve}]+2\nabla_K\mu_\ve \partial_{\xi\xi_N}^2v[\xi,\nabla_Kd_{N,\ve}] \nonumber\\
&&\qquad\quad -2\mu_\ve\tilde{g}^{ab}\frac{1}{\rho}\partial_{N\bar{a}}^2v\partial_bd_{N,\ve} +2\mu_\ve d_{N,\ve}\xi_NQ(H)_{ij}\partial_{ij}^2v-\mu_\ve d_{N,\ve}tr(H^2)\partial_Nv \nonumber\\
&&\qquad\quad  -2(H_{aj}+\tilde{g}^{ac}H_{cj})\Big[\mu_\ve\xi_N\partial_ad_{N,\ve}\partial_{Nj}^2v+(1+\gamma)d_{N,\ve}\partial_a\mu_\ve\partial_j v+\frac{1}{\ve}\mu_\ve^2\xi_N \partial_{\bar{a}j}^2v\Big],\nonumber\\[3mm]
&&\mathcal{S}_3(v) =\mu_\ve\triangle_K \mu_\ve[\gamma v+D_\xi v[\xi]]-2\gamma\mu_\ve\nabla_K\mu_\ve \nabla_{K_\rho}v-2\mu_\ve\tilde{g}^{ab}\partial_b\mu_\ve D_\xi(\frac{\partial_{\bar{a}}v}{\rho})[\xi]\nonumber\\
&&\quad\quad \quad+|\nabla_K\mu_\ve|^2\left[\gamma(\gamma+1)v+2(\gamma+1)D_\xi v[\xi]+D_{\xi\xi}^2v[\xi]^2\right]\nonumber\\
&&\quad\quad \quad -\frac{1}{3}R_{islj}\mu_\ve^2\xi_s\xi_l\partial_{ij}^2v+\mu_\ve^2\xi_N^2Q(H)_{ij}\partial_{ij}^2v+\mu_\ve^2\xi_N\mathfrak{D}_{Nl}^{ij} \xi_l\partial_{ij}^2v\nonumber\\
&&\quad\quad\quad  \left. +\mu_\ve^2[\frac{2}{3}R_{mllj}+\tilde{g}^{ab}R_{jabm}-\Gamma^c_{am}\Gamma^{a}_{cj}]\xi_m\partial_jv
-\mu_\ve^2\xi_Ntr(H^2)\partial_Nv\right.\nonumber\\
&&\quad\quad \quad  -2(H_{aj}+\tilde{g}^{ac}H_{cj})[\mu_\ve\xi_N\partial_a\mu_\ve D_\xi(\partial_jv)+(1+\gamma)\xi_N\mu_\ve\partial_a\mu_\ve \partial_jv],\nonumber\\[3mm]
&&\mathcal{S}_4(v) = \partial_{jN}^2v\nabla_K d_{N,\ve}\nabla_Kd_j+d_N\mathfrak{D}_{Nl}^{ij} d_{l,\ve}\partial_{ij}^2v-2d_{N,\ve} (H_{aj}+\tilde{g}^{ac}H_{cj})\partial_ad_l\partial_{jl}^2v,\nonumber\\[3mm]
&&\mathcal{S}_5(v) = -\mu_\ve\partial_jv\triangle_K d_{j,\ve}+\gamma(1+\gamma) \nabla_K\mu_\ve \nabla_Kd_{j,\ve} \partial_jv+2\nabla_K\mu_\ve\nabla_Kd_j\partial_{jl}^2v\xi_l \nonumber\\
&&\qquad \qquad\left. -2\mu_\ve\tilde{g}^{ab}\frac{1}{\rho}\partial_{\bar{a}j}^2v\partial_bd_{j,\ve}-\frac{1}{3}\mu_\ve R_{mijl}(\xi_md_{l,\ve}+\xi_ld_{m,\ve})\partial_{ij}^2v
+\mu_\ve\mathfrak{D}_{Nl}^{ij}\xi_Nd_{l,\ve}\partial_{ij}^2v\right.\nonumber\\
&&\qquad \qquad+\mu_\ve[\frac{2}{3}R_{mllj}+\tilde{g}^{ab}R_{jabm}-\Gamma^c_{am}\Gamma^{a}_{cj}]d_{m,\ve}\partial_jv
-2\mu_\ve\xi_N(H_{aj}+\tilde{g}^{ac}H_{cj})\partial_ad_{l,\ve}\partial_{jl}^2v,\nonumber\\[3mm]
&&\mathcal{S}_6(v) = \nabla_Kd_{j,\ve}\nabla_Kd_{i,\ve} \partial_{ij}^2v-\frac{1}{3}R_{islj}d_{s,\ve}d_{l,\ve}\partial_{ij}^2v,
\end{eqnarray*}
where  the functions $\mathfrak{D}_{Nk}^{ij}$ are smooth functions of
the variable $z= \frac{y}{\rho}$ and uniformly bounded. Finally,
the operator $B(v)$ is defined  in \eqref{decbv}. \smallskip

\end{lemma}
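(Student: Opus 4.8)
The plan is to derive the identity \eqref{lap1a} directly from the expansion of the Laplace–Beltrami operator given in Lemma \ref{scaledlaplacian}, by substituting the scaling relations $\tilde\mu_\ve=\rho\mu_\ve$, $\tilde{\bar d}_\ve=\ve^2\bar d_\ve$, $\tilde d_{N,\ve}=\ve d_{N,\ve}$, and then bookkeeping the powers of $\ve$ and $\rho$ that each term carries. First I would recall from \eqref{Sep} that ${\mathcal S}_\ve(v)=-{\mathcal A}_{\mu_\ve,d_\e}v-\mu_\ve^{\frac{N-2}{2}\ve}v^{p-\ve}$, and that \eqref{lap1} expresses ${\mathcal A}_{\mu_\ve,d_\e}v$ as $\mu_\ve^2\D_{K_\rho}v+\Delta_\xi v+\sum_{\ell=0}^5{\mathcal A}_\ell v+B(v)$. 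So the whole task reduces to expanding each ${\mathcal A}_\ell v$ after the substitution, separating the terms according to their homogeneity in $(\ve,\rho)$, and collecting: the leading-order pieces (the ones that are $O(\ve)$ coming from $\tilde d_{N,\ve}=\ve d_{N,\ve}$, and the ones that are $O(\rho)$ coming from $\tilde\mu_\ve=\rho\mu_\ve$) are displayed explicitly in the first two lines of \eqref{lap1a}, while everything of higher order in $\ve$, $\rho$ or mixed is grouped into $\ve^2{\mathcal S}_1+\ve\rho\,{\mathcal S}_2+\rho^2{\mathcal S}_3+\ve^3{\mathcal S}_4+\ve^2\rho\,{\mathcal S}_5+\ve^4{\mathcal S}_6$.

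The key steps, in order, would be: (1) handle ${\mathcal A}_1 v$, which contains the terms $2(\tilde\mu_\ve\xi_N+\tilde d_{N,\ve})H_{ij}\partial^2_{ij}v$ and the curvature term $-\tfrac13 R_{mijl}(\tilde\mu_\ve\xi_m+\tilde d_{m,\ve})(\tilde\mu_\ve\xi_l+\tilde d_{l,\ve})\partial^2_{ij}v$ plus the $Q(H)$ and $\mathfrak D$ pieces — substituting $\tilde\mu_\ve=\rho\mu_\ve$, $\tilde d_{N,\ve}=\ve d_{N,\ve}$, $\tilde d_{j,\ve}=\ve^2 d_{j,\ve}$ ($j\le N-1$) and expanding the products $(\rho\mu_\ve\xi_N+\ve d_{N,\ve})$ etc., one sees that $2\ve d_{N,\ve}H_{ij}\partial^2_{ij}v$ and $2\rho\mu_\ve\xi_N H_{ij}\partial^2_{ij}v$ split off as the explicit first-order terms while $\ve^2 d_{N,\ve}^2 Q(H)_{ij}\partial^2_{ij}v$, the $\rho^2$-curvature term $-\tfrac13 R_{islj}\mu_\ve^2\xi_s\xi_l\partial^2_{ij}v$, etc. land in ${\mathcal S}_1$, ${\mathcal S}_3$, respectively; (2) do the same for ${\mathcal A}_3 v=-\tilde\mu_\ve[\mathrm{tr}(H)+(\tilde\mu_\ve+\tilde d_{N,\ve})\mathrm{tr}(H^2)]\partial_N v$, which yields $-\rho\mu_\ve\,\mathrm{tr}(H)\partial_N v$ explicitly plus $-\rho^2\mu_\ve^2\mathrm{tr}(H^2)\partial_N v$ and $-\ve\rho\,\mu_\ve d_{N,\ve}\mathrm{tr}(H^2)\partial_N v$ into ${\mathcal S}_3$ and ${\mathcal S}_2$; (3) expand ${\mathcal A}_0 v$ — every term there carries a factor $\tilde\mu_\ve$ or $\nabla_K\tilde\mu_\ve$ or $\nabla_K\tilde d_\ve$, so using $\nabla_K\tilde\mu_\ve=\rho\nabla_K\mu_\ve$, $\nabla_K\tilde d_{N,\ve}=\ve\nabla_K d_{N,\ve}$, $\nabla_K\tilde d_{j,\ve}=\ve^2\nabla_K d_{j,\ve}$, the terms with two $\mu_\ve$-factors give $\rho^2$ (into ${\mathcal S}_3$), those with one $\mu_\ve$ and one $d_{N,\ve}$ give $\ve\rho$ (into ${\mathcal S}_2$), those with two $d_{N,\ve}$ give $\ve^2$ (into ${\mathcal S}_1$), those mixing $d_{N,\ve}$ and $d_{j,\ve}$ give $\ve^3$ (into ${\mathcal S}_4$), and those with $\mu_\ve$ and $d_{j,\ve}$ give $\ve^2\rho$ (into ${\mathcal S}_5$), etc.; (4) treat ${\mathcal A}_2 v$ (carries one factor $\tilde\mu_\ve$ times $(\tilde\mu_\ve\xi_m+\tilde d_{m,\e})$, hence contributes $\rho^2$ and $\ve^2\rho$ terms to ${\mathcal S}_3$ and ${\mathcal S}_5$); (5) ${\mathcal A}_4 v$ (carries $(\tilde\mu_\ve+\tilde d_{N,\ve})$ times derivative combinations with extra $\tilde\mu_\ve$ or $\tilde d_\ve$ factors, giving $\ve$, $\ve^2$, $\ve\rho$, $\rho^2$ pieces — note the explicit first-order $-\ve\cdot 2d_{N,\ve}H_{ij}\partial_{ij}v$ in \eqref{lap1a} in fact comes from combining the ${\mathcal A}_1$ contribution with the relevant ${\mathcal A}_4$ one, so care is needed to match coefficients); (6) ${\mathcal A}_5 v$ (the $\mathfrak D$-type term, all of whose pieces are of order $\ve^2\rho$ or higher, into ${\mathcal S}_5$ and ${\mathcal S}_6$); and (7) observe that $B(v)$ stays $B(v)$ unchanged, as stated, since it is already defined in \eqref{decbv} in the $\ve,\rho$-variables. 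Throughout, the $\tfrac1\rho\partial_{\bar a}v$ factors appearing in ${\mathcal A}_0,{\mathcal A}_4,{\mathcal A}_5$ combine with a compensating $\tilde\mu_\ve=\rho\mu_\ve$ to produce clean $\mu_\ve\partial_{\bar a}v$ or $\tfrac1\ve\mu_\ve$ factors, which is why ${\mathcal S}_1$ and ${\mathcal S}_2$ contain the $\tfrac1\ve\mu_\ve\partial^2_{\bar aj}v$ terms.

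The main obstacle is purely combinatorial precision: because the substitution replaces single symbols by products $\rho\mu_\ve$, $\ve d_{N,\ve}$, $\ve^2 d_{j,\ve}$ and $\D_{K}=\rho^{-2}\D_{K_\rho}$ acts with its own $\rho$-weight, each original monomial in Lemma \ref{scaledlaplacian} splits into several monomials of \emph{different} homogeneity in $(\ve,\rho)$, and one must assign each summand to exactly one of the eight groups (two explicit lines plus ${\mathcal S}_1,\dots,{\mathcal S}_6,B$) without dropping or double-counting. In particular the cross terms in $(\tilde\mu_\ve\xi+\tilde d_\ve)^2$-type expressions and the careful tracking of $\gamma=\tfrac{N-2}{2}$ factors require patience. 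There is no conceptual difficulty — the identity is an algebraic reorganization — but the verification that the six remainder operators ${\mathcal S}_j$ are \emph{exactly} as written (including every $(H_{aj}+\tilde g^{ac}H_{cj})$ block, every $Q(H)$ and every $\mathfrak D$ term) is where all the work lies; I would organize it as a table, one row per ${\mathcal A}_\ell$, one column per power of $(\ve,\rho)$, and read off \eqref{lap1a} from the column sums, then simply cite \cite[Lemma 3.3]{demamu} for the underlying expansion \eqref{lap1} and declare the rest a direct computation.
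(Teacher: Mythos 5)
Your plan is exactly the paper's approach: the paper offers no separate proof of Lemma \ref{scaledlaplacianco}, deriving it precisely by substituting $\tilde\mu_\ve=\rho\mu_\ve$, $\tilde{\bar d}_\ve=\ve^2\bar d_\ve$, $\tilde d_{N,\ve}=\ve d_{N,\ve}$ into the expansion of Lemma \ref{scaledlaplacian} and regrouping by homogeneity in $(\ve,\rho)$. One small correction to your step (5): the displayed first-order term $-2\ve\, d_{N,\ve}H_{ij}\partial_{ij}v$ (both $i,j$ normal indices) comes entirely from ${\mathcal A}_1$, while the ${\mathcal A}_4$ contributions involve the mixed blocks $(H_{aj}+\tilde g^{ac}H_{cj})$ with $\partial^2_{\bar a j}v$ and are absorbed into ${\mathcal S}_1,{\mathcal S}_2,\dots$ via the $\tfrac1\ve\mu_\ve$ bookkeeping that you correctly flag at the end.
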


\setcounter{equation}{0}
\section{Construction of local approximate solutions}\label{s:aprsol}
\smallskip

In this section, we will construct very accurate approximate solutions to our problem.
The basic tool for this construction is a linear theory we describe
below. We consider the domain $\DD$ defined as (\ref{defD})
and for functions $\phi$ defined on $\DD$, an operator of the form
\begin{equation} \label{rivoli2}
L(\phi):=
- \Delta_\xi \phi - p U^{p-1} \phi   + b_{ij}(\rho z,\xi) \partial_{ij}\phi +b_i(\rho z,\xi)\partial_i\phi
\end{equation}
where  $b_{ij} $, $b_i$ and $c$ are functions defined in
$\DD$, which depend smoothly on $y \in K$. Recall that
a variable $z\in K_\rho$ has the form $\rho z=y \in K$.

\medskip

We want to establish a solvability theory and an a-priori bounds for the  following linear problem
\begin{equation}\label{eq:eqwd}
 \left\{
    \begin{array}{ll}
    L(\phi)=h,  &\  \hbox{ in } \DD\\[3mm]
     \phi  = 0, \ \, &\ \hbox{ on } \partial\hat\DD\\[3mm]
    \int_{\hat\DD} \phi (\rho z, \xi ) Z_j (\xi ) \, d\xi = 0 &\  \forall z \in K_\rho, \quad  j=0, \ldots N+1,
    \end{array}
  \right.
\end{equation}
for a given function $h : \DD \to \R$, which depends smoothly on the variable $y \in K$.
The functions $Z_j (\xi )$, $j=1, \ldots , N+1$, are
 \begin{equation}
 \label{lezetas}
Z_j (\xi ) = {\partial U \over \partial \xi_j} , \quad j=1, \ldots
, N, \quad Z_{N+1} (\xi ) = \xi \cdot \nabla U
(\xi )  + \frac {N-2}2 U (\xi ) .
\end{equation}
It is well known  (see for instance \cite{bianchiengel}) that these functions
are the only bounded solutions to the linearized equation around
$U$ of problem \equ{w0} in $\R^N$
$$
-\Delta \phi - p U^{p-1} \phi =0 \quad {\mbox {in}} \quad \R^{N}.
$$
Moreover, $Z_0$ is the first eigenfunction  (normalized to have $L^2$-norm equal
to $1$) corresponding to the first eigenvalue
$\lambda_1 >0$
$L^2(\R^N)$  of the problem
\begin{equation}\label{denotareZ0}
 \Delta_\xi \phi + pU(\xi)^{p-1}\phi  = \la \phi\quad   \mbox{ in} \quad
 \R^{N}.
\end{equation}
Observe that this eigenfunction
decays exponentially at infinity like $\xi \longmapsto |\xi|^{-{N-1 \over 2}}
e^{-\sqrt{\lambda_1} \, |\xi|}$.

\medskip
\noindent
In order to solve the above linear problem, we define the following norms.
Let $\delta >0$ be a positive, small fixed number. Let $r$ be an
integer. For a function $w$ defined in $\DD=K_\rho \times \hat\DD$, we define
\begin{equation}\label{eqinftynu}
\|w\|_{\e,r}:=\sup_{(z,\xi)\in K_\rho \times \hat\DD}\left( \,(1+|\xi|^2)^{r \over 2}|w(z,\xi)|
\right).
\end{equation}
Let $\sigma \in (0,1)$. We define
\begin{equation}
\label{normsigma} \| w \|_{\e, r, \sigma} := \| w \|_{\rho , r} + \sup_{(z,\xi)\in K_\rho
\times \hat\DD}\left( \,(1+|\xi|^2)^{r
+\sigma\over 2} [w]_{\sigma, B(\xi , 1)} \right)
\end{equation}
where we have denoted
\begin{equation}\label{eqnorms}
[w]_{\sigma, B(\xi , 1)} := \sup_{ \xi_1, \xi_2\in B(\xi , 1),\, \xi_1\ne \xi_2}
\frac{|w(z,\xi_2)-w(z,\xi_1)|}{|\xi_1-\xi_2|^\sigma}.
\end{equation}
We will establish  existence and uniform a priori estimates for problem (\ref{eq:eqwd}) in the above norms, provided that appropriate bounds for coefficients hold.
We have the validity of the following result.

\medskip

\begin{proposition}\label{linear}
Assume that $N\geq 7$, and let $r$ be an integer such that $2<r < N-2$.
Then there exist positive numbers $\delta,  C$ such that if, for all
$i,j$
\begin{equation}\label{ipo1}
\begin{array}{lll}
 \|b_{ij}\|_\infty + \|D b_{ij}\|_\infty +
\|(1+|y|)b_i\|_\infty 
< \delta,
\end{array}
\end{equation}
for all $y=\rho z\in \R^k$.
Let $h : K \times \hat\DD
\to \R $ be a function that depends smoothly on the variable $y \in
K$, such that $ \| h \|_{\ve , r} $ is bounded, uniformly in $\ve$,
and
$$
\int_{\hat\DD}h(\ve z,\xi)Z_j(\xi)d\xi=0\quad \mbox{for\ all}\ z\in K_\rho,\ \ j=0,1,\ldots,N+1.
$$
Then there exists a solution $\phi$ of problem (\ref{eq:eqwd}) and a constant $C>0$ such that
\begin{equation}
\label{est0a} \| D^2_\xi \phi \|_{\ve , r , \sigma } + \| D_\xi \phi
\|_{\ve , r -1 , \sigma } +\|\phi \|_{\ve, r- 2 , \sigma}\le C
\|h\|_{\ve,r ,\sigma} .
\end{equation}
Furthermore, the function $\phi$ depends smoothly on the variable
$\rho z$, and the following estimates hold true: for any integer $l$
there exists a positive constant $C_l$ such that
\begin{equation}
\label{est1a} \| D^l_y \phi \|_{\ve , r- 2 , \sigma }  \le C_l \left(
\sum_{k\leq l} \|D^k_y h\|_{\ve,r , \sigma}\right).
\end{equation}
\end{proposition}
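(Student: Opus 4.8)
The plan is to prove Proposition \ref{linear} in two stages: first establishing the a priori estimate \eqref{est0a} by a blow-up/compactness argument, and then using it to produce the solution via a Galerkin-type approximation on an exhausting sequence of bounded domains, finally obtaining the smooth dependence and the higher-order estimates \eqref{est1a} by differentiating the equation in the $y$-variable and bootstrapping.

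\medskip
\textbf{Step 1: A priori estimate by contradiction.} Suppose \eqref{est0a} fails. Then there are sequences $\ve_m\to 0$, coefficients $b_{ij}^{(m)},b_i^{(m)}$ satisfying \eqref{ipo1}, solutions $\phi_m$ of \eqref{eq:eqwd} with right-hand sides $h_m$, normalized so that $\|D^2_\xi\phi_m\|_{\ve_m,r,\sigma}+\|D_\xi\phi_m\|_{\ve_m,r-1,\sigma}+\|\phi_m\|_{\ve_m,r-2,\sigma}=1$ while $\|h_m\|_{\ve_m,r,\sigma}\to 0$. Pick points $(z_m,\xi_m)\in K_{\rho_m}\times\hat\DD$ where the weighted norm of $\phi_m$ is attained up to a factor $1/2$. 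There are two cases. If $|\xi_m|$ stays bounded, translate $z$-coordinates so that $\rho_m z_m\to q_\infty\in K$ (using compactness of $K$) and pass to the limit: the rescaled domain $K_{\rho_m}\times\hat\DD$ exhausts $\R^k\times\R^N$ or a half-space, the coefficients $b_{ij}^{(m)}\partial_{ij}+b_i^{(m)}\partial_i$ drop out because of \eqref{ipo1} (here the smallness $\delta$ and the decay $(1+|y|)b_i$ are used), and $\phi_m\to\phi_\infty$ in $C^2_{loc}$, where $\phi_\infty$ is a bounded nontrivial solution of $-\Delta_\xi\phi_\infty-pU^{p-1}\phi_\infty=0$ in $\R^N$ satisfying the orthogonality conditions $\int\phi_\infty Z_j=0$ for $j=0,\ldots,N+1$. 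By the nondegeneracy of $U$ recalled after \eqref{lezetas}, the space of bounded solutions is spanned by $Z_1,\ldots,Z_{N+1}$ (and $Z_0$ is not bounded—more precisely not in the relevant space—or is excluded by the $j=0$ orthogonality), so $\phi_\infty\equiv 0$, contradicting nondegeneracy of the normalization at $\xi_m$. If instead $|\xi_m|\to\infty$, use the weighted norm: set $\tilde\phi_m(\xi)=|\xi_m|^{r-2}\phi_m(z_m,\xi_m+|\xi_m|\,\xi)$ (and similarly track the derivatives with their own weights), so that $|\tilde\phi_m(0)|\ge 1/2$; the potential $pU^{p-1}$ decays like $|\xi|^{-4}$ at $\xi_m$, so in the limit $\tilde\phi_m$ converges to a bounded entire harmonic function in $\R^N$ (or a half-space with zero boundary data) which, by the decay built into the norm and Liouville, must vanish at the origin—again a contradiction. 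Combining both cases yields \eqref{est0a}, and in particular uniqueness.

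\medskip
\textbf{Step 2: Existence.} With the a priori bound in hand, solve \eqref{eq:eqwd} on the truncated domains $\DD_R=K_\rho\times(\hat\DD\cap B_R)$ with zero data on the new boundary, working in the Hilbert space $H$ of functions in $H^1_0(\DD_R)$ satisfying the orthogonality constraints $\int_{\hat\DD\cap B_R}\phi Z_j\,d\xi=0$ for a.e.\ $z$. The bilinear form associated with $L$ is a compact perturbation of the $H^1$ inner product (the extra terms $b_{ij}\partial_{ij}+b_i\partial_i$ are lower-order and small, and $pU^{p-1}$ is bounded with decay), so by Fredholm's alternative the finite-dimensional-kernel problem is solvable modulo the span of the $Z_j$; the Lagrange multipliers are controlled because the $Z_j$ are linearly independent and decay exponentially in the $Z_0$ case and polynomially otherwise, giving a genuine solution $\phi_R$. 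The a priori estimate of Step 1, applied uniformly in $R$ (its proof did not use boundedness of the domain), gives $\|\phi_R\|_{\ve,r-2,\sigma}\le C\|h\|_{\ve,r,\sigma}$ independently of $R$; letting $R\to\infty$ and using elliptic interior estimates plus a diagonal argument extracts a limit $\phi$ solving \eqref{eq:eqwd} on all of $\DD$, still satisfying \eqref{est0a}. Schauder estimates upgrade the bounds to include the $D^2_\xi$ and $D_\xi$ pieces in the $C^\sigma$-weighted norms.

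\medskip
\textbf{Step 3: Smooth dependence on $y$ and higher estimates.} Since all data depend smoothly on $y=\rho z$ and the solution of the constrained problem is unique, $\phi$ depends smoothly on $y$; differentiating \eqref{eq:eqwd} $l$ times in $y$ gives that $D^l_y\phi$ solves an equation of the same type with right-hand side $D^l_y h$ plus terms involving $D^k_y\phi$ for $k<l$ multiplied by $y$-derivatives of the (bounded) coefficients, together with correction terms coming from differentiating the orthogonality conditions (which produce a bounded projection onto $\mathrm{span}\{Z_j\}$, harmless by the same Fredholm bookkeeping). Applying \eqref{est0a} to this differentiated problem and inducting on $l$ yields \eqref{est1a}.

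\medskip
\textbf{Main obstacle.} The delicate point is the large-$|\xi|$ regime of the blow-up argument in Step 1: one must choose the rescaling so that \emph{all three} weighted quantities $\|D^2_\xi\phi\|_{\ve,r,\sigma}$, $\|D_\xi\phi\|_{\ve,r-1,\sigma}$, $\|\phi\|_{\ve,r-2,\sigma}$ are simultaneously respected, verify that the restriction $2<r<N-2$ is exactly what makes the limiting Liouville-type statement applicable (the decay $|\xi|^{-(r-2)}$ with $0<r-2<N-4$ must be too fast for a nonzero entire harmonic function yet slow enough to be consistent with the source), and handle the boundary contribution $\xi_N\ge -\tilde d_{N,\ve}/\tilde\mu_\ve$ where in the limit the half-space or the whole space appears depending on whether $\xi_m$ escapes toward the boundary; keeping the zero Dirichlet condition on $\partial\hat\DD$ consistent through this limit is the part requiring the most care.
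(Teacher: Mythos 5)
Your overall strategy — blow-up/compactness for the a priori bound, Fredholm alternative for existence, and $y$-differentiation for the higher-order estimates — is the same as the paper's. The paper differs in one organizational point worth flagging, because it hides a gap in your Step 1: you claim the terms $b_{ij}^{(m)}\partial_{ij}+b_{i}^{(m)}\partial_i$ ``drop out'' in the blow-up limit thanks to \eqref{ipo1}. But \eqref{ipo1} bounds these coefficients by a fixed (small) $\delta$, not by a quantity tending to $0$ along the blow-up sequence, so in the limit you a priori obtain an equation $-\Delta\phi_\infty - pU^{p-1}\phi_\infty + b^\infty_{ij}\partial_{ij}\phi_\infty + b^\infty_i\partial_i\phi_\infty=0$ with possibly nonzero $b^\infty$ of size $\delta$; this is no longer the nondegenerate model equation and the kernel characterization via $Z_1,\dots,Z_{N+1}$ does not apply directly. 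The correct logic (and the paper's) is to run the blow-up argument only for the reduced operator with $b\equiv 0$, obtain \eqref{est0a} in that case, and then absorb $b_{ij}\partial_{ij}+b_i\partial_i$ \emph{after the fact} by a perturbation estimate using the smallness $\delta$: the weighted norms of $b_{ij}\partial_{ij}\phi$ and $b_i\partial_i\phi$ are bounded by $C\delta$ times the left-hand side of \eqref{est0a}, so for $\delta$ small the perturbed bound follows. This two-step structure is exactly why $\delta$ appears in the hypotheses and is not merely cosmetic.

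Two smaller remarks. First, $Z_0$ is in fact bounded (it decays exponentially, $Z_0\sim |\xi|^{-(N-1)/2}e^{-\sqrt{\lambda_1}|\xi|}$); what excludes it is that it is an eigenfunction with eigenvalue $\lambda_1>0$, hence not in the kernel of $-\Delta-pU^{p-1}$. The $j=0$ orthogonality is therefore not what kills it in the bounded-$|\xi_m|$ case of the blow-up; the nondegeneracy statement already spans the kernel by $Z_1,\dots,Z_{N+1}$, and orthogonality against these suffices. Second, your existence step via truncated domains $\DD_R$ and passage to the limit differs slightly from the paper, which works with the Fredholm alternative directly on $\DD$ in the constrained Hilbert space; both routes are fine, and the a priori estimate (which is uniform in the domain) indeed carries through the truncation. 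Also note that your normalization in Step 1 puts all three weighted norms on the left; the paper normalizes only $\|\phi_n\|_{\ve_n,r-2}=1$ and recovers the derivative weights via interior Schauder rescaling in a separate step, which is slightly cleaner since otherwise you must track at which scale the supremum is attained.
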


\begin{proof}
The proof is adapted from Proposition 3.1 in \cite{dmm}. We give here the main ideas of the proof for completeness.

First, we prove a priori estimates by dividing into the following several steps.

{\it Step 1}. \ \ Let us assume that in problem (\ref{eq:eqwd}) the coefficients $b_{ij},b_i$ are identically zero.
Thus assume that $\phi$ is a solution to
\begin{equation}\label{eq:eqw}
 \left\{
    \begin{array}{lll}
   - \D \phi - p w_0^{p-1} \phi 
   =h  &\quad   \hbox{ in } \DD\\
     \phi  = 0, \ \, &\quad\hbox{ on } \partial\hat\DD\\
    \int_{\hat\DD} \phi (\rho z, \xi ) Z_j (\xi ) \, d\xi = 0 & \foral z \in K_\rho, \quad  j=0, \ldots N+1.
    \end{array}
  \right.
\end{equation}
We  claim that there exists $C>0$ such that
\begin{equation}
\label{mar2} \|\phi \|_{\ve, r- 2}\le C \|h\|_{\ve,r}.
\end{equation}
By contradiction, assume that there exist sequences $\ve_n
\to 0$(note that $\rho_n=\ve_n^{\frac{N-1}{N-2}}\to 0$), $h_n$ with $\| h_n \|_{\ve_n , r} \to 0$ and solutions
$\phi_n$  to \equ{eq:eqw} with $\| \phi_n \|_{\ve_n ,  r -2} =1$.

Let $z_n \in K_{\rho_n}$ and $\xi_n$ be such that
$
|\phi_n (\rho_n z_n , \xi_n )| = \sup  |\phi_n (y, \xi )|.
$
We may assume that, up to subsequences, $ \rho_n z_n  \to \bar y $ in
$K$. In particular one
gets that $|\xi_n | \leq C \rho_n^{-1}$ for some positive
constant $C$ independent of $\e_n$.

Let us now assume that there exists a positive constant $M$ such
that $|\xi_n |\leq M$. In this case, up to subsequences, one gets
that $\xi_n \to \xi_0$. Consider the functions
$\tilde \phi_n ( z, \xi ) = \phi_n ( z , \xi + \xi_n ).$ This is a
sequence of uniformly bounded functions, that converges uniformly over compact sets of $K \times
\hat\DD$ to a function $\tilde \phi$ solution to
$
    - \D \tilde \phi - p w_0^{p-1} \tilde \phi =0 $   in $ \R^{N}.
$
Since the orthogonality conditions pass to the limit, we get that
furthermore
$$
    \int_{\R^{N} } \tilde \phi (y, \xi ) Z_j (\xi ) \, d\xi = 0 \quad \foral y \in K, \quad \foral j=0, \ldots N+1.
    $$
These facts imply that $\tilde \phi \equiv 0$, that is a
contradiction.

Assume now that $\lim\limits_{n \to \infty} |\xi_n | = \infty$. Consider the scaled function
$$
\tilde \phi_n (z, \xi ) = \phi_n (z, |\xi_n | \xi + \xi_n )
$$
defined on the set
$$
\bar\DD=\left\{(z,\bar\xi,\xi_N) :\ |\bar\xi|<\frac{\delta}{\rho_n|\xi_n|}-\frac{\xi_n}{|\xi_n|},\
-\frac{\ve_nd_{N,\ve_n}}{\rho_n\mu_{\ve_n}|\xi_n|}-\frac{\xi_n}{|\xi_n|}<\xi_N<\frac{\delta}{\rho_n|\xi_n|}-\frac{\xi_n}{|\xi_n|}\right\}.
$$
Thus
$\tilde \phi_n$ satisfies the equation
$$
-\Delta \tilde \phi_n - p \,c_N^{p-1} {|\xi_n |^2 \over (1+| \, |\xi_n | \xi
+ \xi_n |^2 )^2 } \tilde \phi_n  =
|\xi_n |^2 h (z, |\xi_n | \xi +\xi_n )\ \ \mbox{in}\ \bar\DD.
$$
Under our assumptions, we have that $\tilde \phi_n$ is
uniformly bounded and it converges locally over compact sets to
$\tilde \phi$ solution to
$
\Delta \tilde \phi = 0, \quad  |\tilde
\phi | \leq C |\xi |^{2-r}\quad {\mbox {in}} \quad \R^{N}.
$
Since $2<r<N$, we conclude that $\tilde \phi \equiv 0 $, which is a
contradiction.
The proof of \equ{mar2} is completed.

\smallskip
{\it Step 2}. \ \  We shall now show that there exists $C>0$ such
that, if $\phi$ is a solution to \equ{eq:eqw}, then
\begin{equation}
\label{mar3} \| D^2_\xi \phi \|_{\ve , r } + \| D_\xi \phi \|_{\ve ,
r -1  } +\|\phi \|_{\ve, r- 2 }\le C \|h\|_{\ve,r }.
\end{equation}

For $z \in K_\rho$, we have that $\phi$ solves $ -\Delta_{\xi} \phi = h+p w_0^{p-1} \phi:=\tilde h $
in $ \DD $. From Step 1, we have that $|\tilde h |\leq \frac{\|h\|_{\ve,r}}{(1+|\xi|^{r } )}$. Elliptic estimates give that $| \phi | \leq
{C \over (1+|\xi|^{r -2 } )}$.

Let us now fix a point $e \in \R^N$ and a positive number $R>0$.
Perform the change of variables $ \tilde \phi (z,t) =R^{r-2}\, \phi (z, Rt
+3Re)$, so that
$$
\Delta \tilde \phi =   R^{r}  \tilde h(z,Rt+3Re) \quad {\mbox {in}}
\quad |t|\leq 1.
$$
Elliptic estimates
give then that
$$\| D^2 \tilde \phi \|_{L^\infty(B(0,1))} +\| D \tilde \phi \|_{L^\infty(B(0,1))} \leq C
\|R^{r}  \tilde h(z,Rt+3Re) \|_{L^\infty (B(0,2))}.$$
It then follows that
$$
\| (1+|\xi |)^{r} D^2 \phi \|_{L^\infty (|\xi |\leq \delta
\rho^{-1} )} \leq C \| (1+|\xi |)^{r} h \|_{L^\infty (|\xi
|\leq \delta \rho^{-1} )}.
$$

Arguing in a similar way, one gets the internal weighted estimate
for the first derivative of $\phi$
$$
\| (1+|\xi |)^{r-1} D \phi \|_{L^\infty (|\xi |\leq \delta
\rho^{-1} )} \leq C \| (1+|\xi |)^{r} h \|_{L^\infty (|\xi
|\leq \delta \rho^{-1} )}.
$$

By using the
representation formula for solution $\phi$ to the above equation, we
see that $ | \phi | \leq C \rho^{{r - 2 \over 2}}$ in $|\xi |<\delta
\rho^{-1}$. Furthermore, elliptic estimates give that in this
region $ | D\phi | \leq C \rho^{{r - 1 \over 2}}$ and $ | D^2 \phi |
\leq C \rho^{{r  \over 2}}$. This concludes the proof of \equ{mar3}.

\smallskip

{\it Step 3}. \ \  We shall now show that there exists $C>0$ such
that, if $\phi$ is a solution to \equ{eq:eqw}, then
\begin{equation}
\label{mar4} \| D^2_\xi \phi \|_{\ve , r , \sigma } + \| D_\xi \phi
\|_{\ve , r -1 , \sigma } +\|\phi \|_{\ve, r- 2 , \sigma}\le C
\|h\|_{\ve,r ,\sigma} .
\end{equation}

Let us first assume we are in the region $|\xi |<\delta
\rho^{-1}$, and $z \in K_\rho$. We first claim that from
elliptic regularity, we have that if  $ \|h\|_{\ve,r ,\sigma} \leq
C$ then $\|\phi \|_{\ve, r- 2 , \sigma} \leq C$. Thus, we write that
$\phi$ solves $ -\Delta \phi = \tilde h $ in $ |\xi |< \delta
\rho^{-1} $ where $\|\tilde h\|_{\ve,r ,\sigma} \leq C$.

Arguing as in the previous step, we fix a point $e \in \R^N$ and a
positive number $R>0$. Perform the change of variables $ \tilde \phi (z,t) =R^{r-2}\, \phi (z, Rt
+3Re)$, so that
$$
\Delta \tilde \phi = R^{r}  \tilde h(z,Rt+3Re) \quad {\mbox {in}}
\quad |t|\leq 1.
$$
Elliptic estimates
give then that $\|  D^2 \tilde \phi \|_{C^{0,\sigma}
(B(0,1))} \leq C \| \tilde h \|_{C^{0,\sigma} (B(0,2))}$. This implies
that
$$
  \| D_\xi^2 \tilde \phi \|_{L^\infty (B_1)} + [D^2
\tilde \phi ]_{\sigma, B(0,1)} \leq C.
$$
In particular, we have for any $z \in K_\rho$, that
$$
  \sup_{y_1 , y_2 \in B(0,1)} {|D^2 \tilde \phi (z, y_1) - D^2
\tilde \phi (z, y_2) | \over |y_1 - y_2 |^\sigma} \leq C.
$$
This inequality gets translated in term of $\phi$ as
$$
R^{r+\sigma} \sup_{\xi_1 , \xi_2 \in B(\xi,1)} {|D^2  \phi (z,
\xi_1) - D^2  \phi (z, \xi_2) | \over |\xi_1 - \xi_2 |^\sigma} \leq
C.
$$
In a very similar way, one gets the estimate on $D\phi$.
This concludes the
proof of \equ{mar4}.

\smallskip

{\it Step 4}. \ \ Differentiating equation \equ{eq:eqw} with respect to the
$z$ variable $l$ times and using elliptic regularity estimates, one
proves that
\begin{equation}
\label{est1} \| D^l_y \phi \|_{\ve , r- 2 , \sigma }  \le C_l \left(
\sum_{k\leq l} \|D^k_y h\|_{\ve,r , \sigma}\right)
\end{equation}
for any given integer $l$.

\smallskip

{\it Step 5}. \ \ Assume now that the function $b_{ij}$ and $b_i$ in \equ{eq:eqwd} are not zero, and assume that $\phi$ is a solution of problem (\ref{eq:eqwd}), then by \equ{mar4} we obtain
\begin{align*}
 & \| D^2_\xi \phi \|_{\ve , r , \sigma } + \| D_\xi \phi
\|_{\ve , r -1 , \sigma } +\|\phi \|_{\ve, r- 2 , \sigma}\nonumber\\[3mm]
 \le&  C
\|h\|_{\ve,r ,\sigma}+C\|b_{ij}\partial_{ij}\phi\|_{\ve,r ,\sigma}+C\|b_{i}\partial_{i}\phi\|_{\ve,r ,\sigma}.
\end{align*}
By definition of the norms and from (\ref{ipo1}), we have
$$
\|b_{ij}\partial_{ij}\phi\|_{\ve,r ,\sigma}+\|b_{i}\partial_{i}\phi\|_{\ve,r ,\sigma}
\leq C\delta \left(\| D^2_\xi \phi \|_{\ve , r , \sigma } + \| D_\xi \phi
\|_{\ve , r -1 , \sigma } +\|\phi \|_{\ve, r- 2 , \sigma}\right).
$$
Therefore, taking $\delta>0$ small enough, we  get (\ref{est0a}). Also we  get (\ref{est1a}) as a consequence of (\ref{est1}).

\medskip

Next we prove the existence of the solution
$\phi$ to problem \equ{eq:eqw}.
To this purpose  we consider the Hilbert space $\mathcal{H}$ defined as the subspace of functions $\psi$ which are in $H^1(\DD)$ such that $\psi=0$ on $\partial\hat\DD$, and
$$
\int_{\hat\DD} \psi (\rho z, \xi ) Z_j (\xi ) \, d\xi = 0  \  \foral z \in K_\e, \quad  j=0, \ldots N+1.
$$
Define a bilinear form in $\mathcal{H}$ by
$
B(\phi,\psi):=\int_{\hat\DD}\psi L\phi.
$
Then problem (\ref{eq:eqwd}) gets weakly formulated as that of finding $\phi\in \mathcal{H}$ such that
$
B(\phi,\psi)=\int_{\hat\DD}h\psi\quad \forall\ \psi\in \mathcal{H}.
$
By the Riesz representation theorem, this is equivalent to solve
$
\phi= T(\phi)+\tilde{h}
$
with $\tilde{h}\in \mathcal{H}$ depending linearly on $h$, and $T:  \mathcal{H} \rightarrow \mathcal{H}$ being a compact operator.
Fredholm's alternative guarantees that there is a unique solution to problem (\ref{eq:eqwd}) for any $h$ provided that
\begin{eqnarray}\label{linear7}
\phi= T(\phi)
\end{eqnarray}
has only the zero solution in $\mathcal{H}$. Equation (\ref{linear7}) is equivalent to problem (\ref{eq:eqwd}) with $h=0$.
If $h=0$, the estimate in (\ref{est0a}) implies that $\phi=0$.
This concludes the proof of Proposition \ref{linear}.
\end{proof}

Now we show how we can construct  very accurate approximate solutions to
Problem \eqref{adesso} locally close to $K_\rho$,  using  an iterative method that we describe below: let $I$ be an integer.
The
expanded variables $(z, \xi)$ will be defined as in \equ{b0} with
\begin{equation}\label{deffmu}
\mu_\ve(y)=  \mu_{0,\ve}+\mu_{1,\e}+\dots +\mu_{I,\e},\quad y=\rho z
\end{equation}
where $\mu_{0,\ve} , \, \mu_{1,\e} , \ldots , \mu_{I , \e} $ will be
smooth functions on $K$, with
$
\mu_{0,\ve}=\mu_0+\ve^{\frac{1}{N-2}}\bar{\mu}_0$, $\mu_0>0
$ as defined in (\ref{defmu0dn0}).
Moreover
\begin{equation}\label{deffPhi}
d_{j,\ve}(y) =  d^{0}_{j,\ve}+d^{1}_{j,\ve} +\dots+d^{I}_{j,\ve} ,\ \ \ j=1,\dots,N,
\end{equation}
where $d_{j,\e}^{\ell}$, $j=1,\dots,N; \, \ell=0,\ldots,I$, will be smooth
functions defined along $K$ with values in $\R$, with
$
d^0_{N,\ve}=d_{N}^0+\ve^{\frac{1}{N-2}}\bar{d}_{N}^0$, $d_{N}^0>0$ as defined in (\ref{defmu0dn0}).
In the $(z, \xi )$ variables, the shape of the approximate solution will be  given by
\begin{equation}\label{vIe1}
v_{I+1,\e}(z, \ov \xi, \xi_N)= \tilde{\omega}_{I+1,\ve}+ \tilde{e}_\ve (y ) \chi_\ve (\xi)
Z_0,\quad y=\rho z\in K,
\end{equation}
with
\begin{equation}\label{vIe}
\tilde{\omega}_{I+1,\ve}= U\left( \xi \right)-\bar{U}\left( \xi \right) +
  w_{1,\e}\left(z, \xi \right)
  + \dots +  w_{I+1,\e}\left(z, \xi \right), \quad \xi = (\bar \xi , \xi_N )
\end{equation}
where $\bar U$ is given by
\begin{equation}\label{ubar}
\bar{U}\left( \xi \right)=U\left( \bar{\xi},\xi_N+2\frac{ \tilde{d}_{N,\ve}}{\tilde{\mu}_\ve} \right)={\alpha_N \over (1+|\bar{\xi}|^2+|\xi_N+2\frac{\tilde{d}_{N,\ve}}{\tilde{\mu}_\ve}|^2)^{N-2 \over 2} }, \quad \alpha_N= (N(N-2))^{\frac{N-2}4},
\end{equation}
and  the functions $w_{j, \e}$'s for $j\ge 1$ are to be
determined so that the above function $v_{I+1,\e}$
satisfies formally
$$
\mathcal{S}_\ve(v_{I+1,\e})=-{\mathcal A}_{\mu_\ve , d_\e } v_{I+1,\e} -\mu_{\ve}^{\frac{N-2}{2}\ve}v_{I+1,\e}^{\frac{N+2}{N-2}-\ve}=\mathcal{O}(\ve^{I+2}) \quad {\mbox {in}} \quad K_\rho\times \hat\DD.
$$
In the second term in \equ{vIe1}, $Z_0$ denotes the first
eigenfunction in $L^2(\R^N)$  of the problem
$$
 \Delta \phi + pU^{p-1}\phi  = \la \phi\quad   \mbox{ in} \quad
 \R^{N}, \quad \lambda_1 >0
$$
with $\int Z_0^2 = 1$ and $\chi_\ve$ is a cut off function defined
as follows. Let $\chi = \chi (s) $, for $s \in \R$, with $\chi (s) =
1$ if $s<\hat \delta $, $\chi (s) = 0$ if $s > 2\hat \delta$, for
some fixed $ \hat \delta>0$ to be chosen in such a way that $\chi_\ve (\bar
\xi, -{\tilde{d}_{N,\ve} \over \tilde{\mu}_\ve } ) =0 $, where $\chi_\ve (\xi) = \chi
(\ve^{1\over N-2} |\xi| )$. Observe that the function $v_{I+1}$ satisfies
the Dirichlet boundary condition for $\xi_N = - {\tilde{d}_{N,\ve} \over
\tilde{\mu}_\ve } $.

Finally, in \equ{vIe1} the function $\tilde{e}_\ve (\rho z )$ is defined as
follows
\begin{equation}\label{canuto111}
\tilde{e}_\ve =\ve e_\ve= \ve (e_{0}+e_{1,\ve}+\cdots+e_{I,\ve})
\end{equation}
where
$
e_{0,\ve}=e_0+\ve^{\frac{1}{N-2}}\bar{e}_0,
$
with $e_0$ is
an explicit smooth function, uniformly bounded in $\ve$, whose expression is given in (\ref{defe0}).

Next Proposition shows existence and qualitative properties of the functions $\mu_\e$, $d_\e$ and
$v_{I+1, \e}$ as described above.
We prove the following result.

\begin{proposition} \label{Construction}
For any integer $I \in \N$ there exist  smooth functions $\mu_\e :K
\to \R$ and $d_{1,\e},\ldots,d_{N,\ve} : K \to \R^{N }$, $e_\ve: K \to \R$,
such that
\begin{equation}
\label{bf2} \| \, \mu_{\e}\|_{L^\infty (K)} +\| \partial_a
\mu_{\e}\|_{L^\infty (K)} +\|\partial^2_a \, \mu_{\e}\|_{L^\infty
(K)} \leq C
\end{equation}
\begin{equation}
\label{bf3} \| \, d_{j,\e}\|_{L^\infty (K)} +\| \partial_a
d_{j,\e}\|_{L^\infty (K)} +\|\partial^2_a
d_{j,\e}\|_{L^\infty (K)} \leq C,\quad \mbox{for}\ j=1,\ldots,N,
\end{equation}
\begin{equation}\label{bf45}
\|e_\ve\|_{L^\infty (K)} +\| \partial_a
e_\ve\|_{L^\infty (K)} +\|\partial^2_a
e_\ve\|_{L^\infty (K)} \leq C
\end{equation}
for some positive constant $C$, independent of $\e$.  Moreover there exists a positive
function $v_{I+1, \e} :K_\rho\times \hat\DD \to \R$ such that
$$
-{\mathcal A}_{\mu_\e , d_\e} (v_{I+1 , \e} ) -\mu_\ve^{\frac{N-2}{2}\ve}v_{I+1, \e}^{p-\ve} = {\mathcal E}_{I+1 , \e} \quad {\mbox {in}}
\quad  \DD
$$
$$
v_{I+1 , \e}  = 0 \quad {\mbox {on}}
\quad \partial \DD
$$
with
\begin{equation} \label{bf4}
\| v_{I+1 , \e} - v_{I , \e} \|_{\e , 2 , \sigma } \leq C
\e^{I+1}, \quad \| {\mathcal E}_{I+1 , \e} \|_{\e , 4 , \sigma} \leq C \e^{I+2}.
\end{equation}
\end{proposition}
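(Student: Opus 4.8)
The plan is to construct $v_{I+1,\e}$ by induction on $I$, adding one correction term at each step and solving a projected linear problem of the type handled by Proposition \ref{linear}. I would begin with the first approximation $v_{0,\e} := U - \bar U + \tilde e_\e \chi_\e Z_0$ and compute $\mathcal S_\e(v_{0,\e})$ using the expansion in Lemma \ref{scaledlaplacianco}. Since $U$ solves the limit problem \eqref{limprok} (after the scaling absorbed into $\alpha_\e$) and $\bar U$ is a harmless reflection correcting the boundary condition, the error $\mathcal E_{0,\e}$ is a sum of terms carrying explicit powers of $\e$ and $\rho = \e^{\frac{N-1}{N-2}}$ coming from the curvature terms $-\e\, 2 d_{N,\e} H_{ij}\partial_{ij}v$, $-\rho\{2\mu_\e\xi_N H_{ij}\partial_{ij}v - \mu_\e \mathrm{tr}(H)\partial_N v\}$, and the lower-order operators $\mathcal S_1,\dots,\mathcal S_6$ and $B(v)$; the leading term is of order $\e$. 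I would also expand $\mu_\e^{\frac{N-2}{2}\e} v^{p-\e} = v^p(1 + \frac{N-2}{2}\e \log\mu_\e - \e\, v^{p-1}\log v + \dots)$ to extract the $\e$-order contribution.

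The inductive step is the heart of the argument. Suppose we have $v_{J,\e} = U - \bar U + w_{1,\e} + \dots + w_{J,\e} + \tilde e_\e \chi_\e Z_0$ with $\mathcal S_\e(v_{J,\e}) = \mathcal E_{J,\e} = \e^{J+1} \mathsf R_{J,\e} + \mathcal O(\e^{J+2})$, where the principal part $\mathsf R_{J,\e}$ is explicit. We seek $w_{J+1,\e}$ (of size $\e^{J+1}$) and increments of the parameter functions $\mu_{J,\e}, d^J_{j,\e}, e_{J,\e}$ (also of size $\e^{J+1}$, or $\e^{J+1+\frac1{N-2}}$ for the correction pieces $\bar\mu_0,\bar d_N^0,\bar e_0$) so that, to order $\e^{J+2}$,
\[
-\Delta_\xi w_{J+1,\e} - pU^{p-1} w_{J+1,\e} = -\e^{J+1}\mathsf R_{J,\e} + (\text{terms from varying }\mu,d,e).
\]
The solvability of this equation in the weighted Hölder norms requires, by the Fredholm theory behind Proposition \ref{linear}, that the right-hand side be $L^2(\hat\DD)$-orthogonal to $Z_1,\dots,Z_{N+1}$ (the translation and dilation kernel elements) and to $Z_0$ (the first eigenfunction, which must be killed because of the resonance). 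The orthogonality against $Z_{N+1}$ (dilation) determines the equation for $\mu_{J,\e}$: at the bottom level $J=0$ this yields an explicit ODE/algebraic equation along $K$ whose principal term is forced to be positive precisely by the hypothesis $\sum_{a=1}^k H_{aa}(q) < 0$, giving $\mu_0 > 0$ as in \eqref{defmu0dn0}. Orthogonality against $Z_N$ (normal translation) determines $d_{N,\e}$, yielding $d_N^0 > 0$; orthogonality against $Z_1,\dots,Z_{N-1}$ determines the normal section $\bar d_\e$; and orthogonality against $Z_0$ determines $e_{J,\e}$, which is where the term $\tilde e_\e \chi_\e Z_0$ in \eqref{vIe1} earns its keep. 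Once these projection equations are solved (they are solvable because $K$ is minimal and nondegenerate — nondegeneracy makes the relevant Jacobi-type operator on normal sections invertible), Proposition \ref{linear} furnishes $w_{J+1,\e}$ with $\|w_{J+1,\e}\|_{\e,2,\sigma} \le C\e^{J+1}$, and the smooth dependence on $y = \rho z$ follows from estimate \eqref{est1a}.

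After $I+1$ iterations one reaches $\mathcal S_\e(v_{I+1,\e}) = \mathcal E_{I+1,\e}$ with $\|\mathcal E_{I+1,\e}\|_{\e,4,\sigma} \le C\e^{I+2}$; the weight $4$ (rather than $2$) in this norm reflects that the error, being built from curvature terms times derivatives of bubble-like profiles, decays faster than $v_{I+1,\e}$ itself. The uniform bounds \eqref{bf2}–\eqref{bf3}–\eqref{bf45} on $\mu_\e, d_\e, e_\e$ and their first two tangential derivatives come from the explicit nature of the leading terms plus elliptic estimates for the correction terms obtained via \eqref{est1a}. Positivity of $v_{I+1,\e}$ is clear since it is a small perturbation of $U - \bar U > 0$ on the relevant region. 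Finally $\|v_{I+1,\e} - v_{I,\e}\|_{\e,2,\sigma} \le C\e^{I+1}$ is just the size of the last correction $w_{I+1,\e}$ together with the parameter increments. The main obstacle I anticipate is twofold: first, controlling the error expansion carefully enough — because the profile $U$ has only polynomial decay, the terms in $B(v)$ and $\mathcal S_1,\dots,\mathcal S_6$ that involve factors like $\tilde\mu_\e \bar\xi$ or $\tilde\mu_\e\xi_N$ grow in $\xi$ and one must check that after multiplying by the appropriate derivatives of bubbles the result still lies in the right weighted space (this is where $N \ge 7$, equivalently $2 < r < N-2$ with room to spare, is used); and second, verifying that all the projection equations for $\mu_\e, \bar d_\e, d_{N,\e}, e_\e$ are simultaneously solvable with the claimed sign conditions and regularity, which is exactly the point where minimality, nondegeneracy, and the curvature hypothesis $\sum_a H_{aa} < 0$ all enter.
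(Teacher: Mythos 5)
Your proposal is correct and reproduces the paper's strategy: an iterative Picard-type scheme, imposing $L^2(\hat\DD)$-orthogonality against $Z_0, Z_1,\ldots, Z_{N+1}$ to determine the parameter corrections at each step, with Proposition \ref{linear} then supplying $w_{J+1,\e}$ in the weighted H\"older norms. Two implementation details are smoothed over in your account and are worth flagging because they affect the bookkeeping. First, the $Z_l$-projections ($l=1,\ldots,N-1$) of $h_{1,\e}$ and $h_{2,\e}$ vanish identically by $\bar\xi$-evenness, so there is nothing to solve for in the normal directions until step three; the paper therefore introduces $d^0_{l,\e}$ only with the construction of $w_{3,\e}$ (where the $Z_l$-projection is of order $\e^2\rho$), and at the general step $I+1$ the normal section correction carries the offset index $d^{I-2}_{l,\e}$ while $\mu_{I,\e}$, $d^I_{N,\e}$, $e_{I,\e}$ carry index $I$. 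Second, the base parameters $(\mu_{0,\e}, d^0_{N,\e}, e_{0,\e})$ are determined by the genuinely \emph{nonlinear} algebraic system \eqref{systemC}, whose solvability hinges on $\det(F_0)\neq 0$, which in turn reduces to $H_{aa}<0$; the Jacobi operator (and hence nondegeneracy of $K$) enters only later, for $d^0_{l,\e}$ via \eqref{phi1def}, and the subsequent increments $(\mu_{J,\e}, d^J_{N,\e}, e_{J,\e})$ for $J\geq 1$ solve only \emph{linearizations} around the base point via the matrix $M$ in \eqref{systematrix}. Your scheme is salvageable as written, but treating all parameters as uniformly indexed and determined at each step would lead to vacuous equations for $\bar d_\e$ at steps one and two.
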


To construct  such accurate approximate  solutions, we use an iterative scheme of Picard's type.
The arguments have been used in previous works, but in turns out that in this paper the arguments are more involved. For this reason we give a full detailed construction here.  This is the aim of the rest of this section.

\subsubsection*{ \bf Construction of $w_{1,\e}$, $\mu_{0,\ve}$, $d^0_{N,\ve}$ and $e_{0,\ve}$ : }
For this first step we define
$$
v_{1 , \e} = U-\bar{U} + w_{1,\e}+\ve e_{0,\ve}\chi_\ve(\xi) Z_0
$$
with
$
\mu_\ve = \mu_{0,\ve} ,\quad d_{N,\ve} = d^0_{N,\ve},\quad \hbox{and } \quad e_{\ve}=e_{0,\ve}.
$
Using the expansion of $S_\ve(v_{1,\ve})$ given in Lemma
\ref{scaledlaplacianco} with  $U=w_N$ is the standard bubble defined in \eqref{wn},
we then have, in $\DD$,
\begin{align*}
\mathcal{S}_\ve (v_{1,\e})
 =&- {\mathcal A}_{\mu_\ve , d_\e } \left(U-\bar{U} + w_{1,\e}+\ve  e_{0,\ve} Z_0\right) -\mu_{0,\ve}^{\frac{N-2}{2}\ve}\left(U-\bar{U} + w_{1,\e}+\ve  e_{0,\ve} Z_0\right)^{p-\ve}\\[3mm]
  = &- \D_{\R^{N}} w_{1,\e}
  - p U^{p-1} w_{1,\e} -2(\ve\,   d_{N,\ve}^0 + \rho\,  \mu_{0,\ve} \xi_N)H_{ij}\partial_{ij}   w_{1,\ve}   +\rho\mu_{0,\ve} H_{\alpha\alpha}\partial_N
   w_{1,\ve} \\[3mm]
 &+ \bar U^p + pU^{p-1}\bar{U} +\ve \left\{ U^p\log U-\frac{N-2}{2}U^p \log(\mu_{0,\ve})-2d^0_{N,\ve}H_{ij}\partial_{ij}^2U-\lambda_1 e_{0,\ve} Z_0\right\} \nonumber\\
 & -\rho\mu_{0,\ve}  \left[2\xi_NH_{ij}\partial_{ij}^2 U-H_{\alpha\alpha}\partial_NU\right]   + \ve^2\mathcal{E}_{1,\e}  + Q_\ve (w_{1,\ve })\nonumber\\
 =& \mathcal{L}_{\ve}w_{1,\ve}+h_{1,\ve}  + \ve^2\mathcal{E}_{1,\e}  + Q_\ve (w_{1,\ve }),
\end{align*}
where the operator $\mathcal{L}_{\ve}$ is given by
\begin{equation} \label{rivoli3}
\mathcal{L}_{\ve}w_{1,\e} := - \D_{\R^{N}} w_{1,\e}
  - p U^{p-1} w_{1,\e} -2(\ve\,   d_N + \rho\,  \mu_\ve \xi_N)H_{ij}\partial_{ij}   w_{1,\ve}   +\rho\mu_\ve tr(H)\partial_N
   w_{1,\ve}.
\end{equation}
The term $h_{1,\e} $ is defined as follow
\begin{align}\label{g1e}
h_{1,\e}   =& pU^{p-1}\bar{U} +\ve \left\{U^p\log U -\frac{N-2}{2}U^p \log(\mu_{0,\ve})- 2d^0_{N,\ve}H_{ij}\partial_{ij}^2U-\lambda_1 e_{0,\ve} Z_0\right\} \nonumber\\
  &-\rho \mu_{0,\ve}  \left[2\xi_NH_{ij}\partial_{ij}^2 U-H_{\alpha\alpha}\partial_NU\right].
\end{align}
The function $\mathcal{E}_{1, \e}$ is a function which is a sum of
functions of the form
\begin{eqnarray}\label{youjieas}
&&f_1(\rho z) \left[ f_2 ( \mu_{0,\ve}, d^0_{N,\ve},e_{0,\ve}, \partial_a\mu_{0,\ve}, \partial_ad^0_{N,\ve},\partial_e e_{0,\ve}) +\right.\nonumber\\
 &&  \left.+o(1) f_3 ( \mu_{0,\ve}, d^0_{N,\ve},e_{0,\ve},  \partial_a\mu_{0,\ve}, \partial_ad^0_{N,\ve}, \partial_ae_{0,\ve} , \partial^2_{aa}\mu_{0,\ve},
\partial^2_{aa}d^0_{N,\ve}  , \partial^2_{aa}e_{0,\ve} ) \right] f_4(y)
\end{eqnarray}
with $f_1$ a smooth function uniformly bounded in $\ve$, $f_2$ and
$f_3 $ are smooth functions of their arguments, uniformly bounded in
$\ve$ as $\mu_{0,\ve}$, $d^0_{N,\ve}$ and $e_{0,\ve}$ are uniformly bounded. An
important remark is that the function  $f_3$ depends linearly on the
argument. Concerning $f_4$, we have
$$
\sup (1+ |\xi|^4 )|f_4(y)| < + \infty.
$$
The term $Q_\ve (w_{1,\e} )$ is quadratic in $w_{1,\e}$, in fact it
is explicitly given by
$$
\mu_{0,\ve}^{  \frac{N-2}{2}\e}\left[(U-\bar{U} + w_{1\e}+\ve  e_{0,\ve} Z_0 )^{p- \e} - U^{p\pm \e} - p U^{p-1\pm \e} (\bar{U}+w_{1,\e}+\ve  e_{0,\ve} Z_0)\right].
$$
We now define $\mu_\ve = \mu_{0,\ve}$ , $d_{N,\ve} = d^0_{N,\ve}$, and $e_{\ve}=e_{0,\ve}$ in such a way that
\begin{equation}\label{proh1zl}
\int_{\hat\DD}h_{1,\ve}Z_{l}d\xi = 0\qquad \hbox{ for all }\quad  l=0, 1,\ldots,N.
\end{equation}
Since  $h_{1,\ve}$ is an even function on the variable $\bar \xi$ (due to the fact that $U$ and $\bar U$ are even in $\bar\xi$) since the set $\hat\DD$ is symmetric in the variable $\bar \xi$, the above condition is authomatically satisfied for any $l=1, \ldots , N-1$.

On the other hand, we have (see Section \ref{secapp} for a proof)
\begin{equation}\label{proh1zn1}
 \int_{\hat\DD}h_{1,\ve}Z_{N+1}d\xi
  =  \ve \left[-
A_1 \left( { \mu_{0,\ve} \over   d_{N,\ve}^0} \right)^{N-2} +  A_2  +
\ve^{1\over N-2}
  \left({\mu_{0,\ve} \over   d_{N,\ve}^0} \right)^{N-1} g_{N+1} \left( {\mu_{0,\ve} \over   d_{N,\ve}^0} \right) \right](1+o(1)),
\end{equation}
\begin{equation}\label{proh1zn}
\int_{\hat\DD}h_{1,\ve}Z_{N}d\xi  =  \ve^{1+\frac{1}{N-2}} \left[A_3
  \left( { \mu_{0,\ve} \over   d_{N,\ve}^0} \right)^{N-1} +A_6\mu_{0,\ve} \, H_{aa} +
\ve^{1\over N-2}
  \left({\mu_{0,\ve} \over   d_{N,\ve}^0} \right)^{N} g_{N} \left( {\mu_{0,\ve} \over   d_{N,\ve}^0} \right)\right](1+o(1)),
\end{equation}
and
\begin{eqnarray}\label{proh1z0h}
\int_{\hat\DD}h_{1,\ve}Z_{0}d\xi &=& \ve  \left[A_4
  \left( { \mu_{0,\ve} \over   d_{N,\ve}^0} \right)^{N-2} +  A_5-A_7\log(\mu_{0,\ve})-\lambda_1 e_{0,\ve}-2H_{jj}d_{N,\ve}^0\int_{\R^N}\partial^2_{jj}UZ_0d\xi\right.\nonumber\\
  &&\quad\left.+
\ve^{1\over N-2}
  \left({\mu_{0,\ve} \over   d_{N,\ve}^0} \right)^{N-1} g_{0}\left( {\mu_{0,\ve} \over   d_{N,\ve}^0} \right)\right](1+o(1))
\end{eqnarray}
where the functions $g_i$ are smooth function
with $g_i (0) \not= 0$ and $A_i$ are positive constants.

\medskip
Let $(  \mu_{0,\ve} , d_{N,\ve}^0,  e_{0,\ve} ) : K \to \in
(0,\infty) \times (0,\infty)\times \R$ be the solution to the
following system of nonlinear algebraic equations
\begin{equation}\label{systemC}
\left\{\begin{array}{lll}
-A_1   \left ( \frac{\mu_{0,\ve}}{d_{N,\ve}^0} \right )^{N-2} +A_2
    + \ve^{\frac 1{N-2}} \,
 \left ( \frac{\mu_{0,\ve}}{d_{N,\ve}^0}\right )^{N-1}
g_{N+1} \left ( \frac{\mu_{0,\ve}}{d_{N,\ve}^0}\right )\,  =\ 0
\cr \\[3mm]
  A_1
 \left ( \frac{\mu_{0,\ve}}{d_{N,\ve}^0}\right )^{N-1}+\frac{A_1A_6}{A_3} \mu_{0,\ve} H_{aa}
    + \ve^{\frac 1{N-2}} \,
 \left ( \frac{\mu_{0,\ve}}{d_{N,\ve}^0}\right )^{N}   g_{N} \left ( \frac{\mu_{0,\ve}}{d_{N,\ve}^0}\right )\, = 0
 \cr \\[3mm]
 A_4\left(\frac{\mu_{0,\ve}}{d_{N,\ve}^0}\right)^{N-2}+A_5-A_7\log(\mu_{0,\ve})-
\lambda_1 e_{0,\ve}  \\[1mm]
\qquad - 2H_{jj}  d_{N,\ve}^0\int_{\R^N}\partial_{jj}^2 UZ_0d\xi
     + \ve^{\frac 1{N-2}} \,
 \left ( \frac{\mu_{0,\ve}}{d_{N,\ve}^0}\right )^{N-1}    g_{0} \left ( \frac{\mu_{0,\ve}}{d_{N,\ve}^0}\right )\, = \, 0 \ .
\end{array}\right.
\end{equation}
This solution $( \mu_{0,\ve}  , d_{N,\ve}^0 ,
e_{0,\ve} )$ exists and has the form
\begin{equation}\label{choiceofmude}
\mu_{0,\ve} = \mu_0 + \ve^{1\over N-2} \bar{\mu}_0, \quad  d_{N,\ve}^0 = d^0_{N} +
\ve^{1\over N-2} \bar{d}^0_{N}, \quad   e_{0,\ve}   = e_0 + \ve^{1\over N-2} \bar{e}_0,
\end{equation}
where $\mu_0$, $d^0_{N}$ and $e_0$ solve
\begin{equation}\label{soluofmude}
F(\mu_0 , d^0_N , e_0 ) = 0
\end{equation}
where
$$
 F(\mu, d_N , e ) := \left(
 \begin{array}{c}
 -A_1   \left ( \frac \mu
{d_N} \right )^{N-2} + A_2 \ \\
 A_1
 \left ( \frac \mu {d_N} \right )^{N-1}+\frac{A_1A_6}{A_3} \mu H_{aa}\\
 A_4\left(\frac{ \mu }{ d_{N}}\right)^{N-2}+A_5-A_7\log(\mu)-
\lambda_1 e   - 2H_{jj} d_{N}\int_{\R^N}\partial_{jj}^2 UZ_0d\xi\\
\end{array}
\right)
 $$
Explicitely,  we get
\begin{equation}\label{defmu0dn0}
\mu_0=-\left(\frac{A_2}{A_1}\right)^{\frac{N-1}{N-2}}\frac{A_3}{A_6}\frac{1}{H_{aa}},\qquad d^0_{N}=-\frac{A_2}{A_1}\frac{A_3}{A_6}\frac{1}{H_{aa}}
\end{equation}
and
\begin{equation}\label{defe0}
e_0=\frac1{\lambda_1}\bigg\{-2 d_{N}^0 H_{jj}\int_{\R^N}\partial_{jj}^2 UZ_0d\xi+\frac{A_2A_4}{A_1}+A_5-A_7\log(\mu_{0})\bigg\}.
\end{equation}
Exactly at this point is where  we need to assume  that the mean curvature in the directions of  $T_qK$ is negative for any $q\in K$ in order to ensure that $\mu_0$ is positive.

Direct computations give
\begin{eqnarray*}
F_0 :=  \nabla_{\mu , d_N , e} F(\mu_0 ,d^0_{N} , e_0 )
=\left(
\begin{array}{ccc}
-(N-2) A_1
{\mu_0^{N-3} \over
(d^0_{N})^{N-2}} &
(N-2) A_1 {\mu_0^{N-2} \over
(d^0_{N})^{N-1}} & 0 \\[3mm]
 (N-2) A_1 {\mu_0^{N-2} \over
(d^0_{N})^{N-1}}&
-(N-1) A_1 {\mu_0^{N-1} \over
(d^0_{N})^{N}} & 0 \\[3mm]
 a_{31}&
 a_{32} & -\lambda_1 \\
\end{array} \right),
\end{eqnarray*}
where
$$
a_{31}=(N-2)A_4 \frac{\mu_0^{N-3} }{ (d^0_{N})^{N-2}} -\frac{A_7}{\mu_0},\qquad
a_{32}=-(N-2)A_4 \frac{\mu_0^{N-2} }{ (d^0_{N})^{N-1}}-2 H_{jj}d^0_{N}\int_{\R^N}\partial_{jj}^2UZ_0d\xi.
$$
Since
$$
\mbox{det} \left( F_0 \right) =-
\lambda_1 (N-2) A_1^2 {\mu_0^{N-2} \over
(d^0_{N})^{N-1}}
H_{aa}>0,
$$
solving system \equ{systemC} is equivalent to solve a fixed point
problem, which is uniquely solvable in the set
$$
\bigg\{ (\bar{\mu}_0 , \bar{d}_{N}^0 , \bar{e}_0 ) \, : \, \| \bar{\mu}_0 \|_\infty \leq \delta,\  \| \bar{d}_{N}^0\|_\infty \leq \delta,\  \|
\bar{e}_0 \|_\infty \leq \delta \bigg\}
$$
for some proper small $\delta>0$. Moreover, the smoothness of $\bar{\mu}_0 , \bar{d}_{N}^0 , \bar{e}_0$ follows using of the Implicit function Theorem.

For a later purpose we define the following quantities which appeared in the above matrix $F_0$
$$
A:=-(N-2) A_1
{\mu_{0}^{N-3} \over
(d^0_{N})^{N-2}} , \quad
B= (N-2) A_1 {\mu_{0}^{N-2} \over
(d^0_{N})^{N-1}}, \quad
C= -(N-1) A_1 {\mu_{0}^{N-1} \over
(d^0_{N})^{N}}.
$$
An easy computation shows that $AC-B^2 >0$.

\medskip


\medskip
With the choice for $\mu_{0,\ve}$, $d^0_{N,\ve}$ and $e_{0,\ve}$ in (\ref{choiceofmude}), the integral of  the right hand side
in \eqref{eq:eqw1} against $Z_{l}$, $l=0,1,\ldots,N+1$, vanishes on $\hat\DD$.
Furthermore, with this choice of  $\mu_{0,\ve}$, $d^0_{N,\ve}$ and $e_{0,\ve}$ in (\ref{choiceofmude}), the linear operator $\mathcal{L}_{\ve}$ defined in \eqref{rivoli3} satisfies the assumptions of Proposition \ref{linear}. Thus, we  define $w_{1,\e}$ to be solution of the Problem
\begin{equation}\label{eq:eqw1}
    \mathcal{L}_{\ve}w_{1,\e} =- h_{1,\e}  \quad\hbox{ in } \DD\qquad
     w_{1,\e}  = 0, \ \, \quad\hbox{ on } \partial\DD.
\end{equation}
Moreover, it is straightforward to check that
$$
\|h_{1,\e} \|_{\ve , 4 , \sigma } \leq C\ve
$$
for some $\sigma \in (0,1)$. Proposition \ref{linear} thus gives that
\begin{equation}
\label{ew1} \| D^2_\xi w_{1,\e} \|_{\ve , 4 , \sigma } + \| D_\xi
w_{1,\e} \|_{\ve , 3 , \sigma } +\|w_{1,\e} \|_{\ve, 2,
\sigma}\le C \ve
\end{equation}
and that there exists a positive constant $\beta$ (depending only on
$\Omega, K$ and $N$) such that for any integer $\ell$ there holds
\begin{equation}\label{eq:estw1}
    \|\nabla^{(\ell)}_{z} w_{1,\e}(z,\cdot)\|_{\ve,2,\sigma} \leq \beta C_l
    \ve \qquad \quad z \in K_\rho
\end{equation}
where $C_l$ depends only on $l$, $p$, $K$ and $\Omega$.

\medskip
\noindent
With this definition of $\mu_{0,\ve}$, $d^0_{N,\ve}$,  $e_{0,\ve}$ and $w_{1, \e}$, we have in particular that
$$
\| - {\mathcal A}_{\mu_\ve , d_\e } v_{1,\e}  - \mu_\ve^{  \frac{N-2}{2}\ve }\,v_{1,\e}^{p- \ve} \|_{\e , 4 , \sigma} \leq C \e^2.
$$

\medskip

\subsubsection*{\bf  Construction of $w_{2,\e}$ and choice of the parameters $\mu_{1,\e}$, $d^1_{N,\e}$ and $e_{1,\ve}$}

To improve further our approximate solutions $v_{1,\e}$ constructed in the previous step we define  the function
$$v_{2,\e}(z, \xi)= U(\xi)-\bar{U}\left( \xi \right) +
  w_{1,\e}\left(z, \xi \right)
  +   w_{2,\e}\left(z, \xi \right)+\ve e_\ve \chi_\ve(\xi)Z_0,$$
  where now
  $\mu_\e= \mu_{0,\ve}+\mu_{1,\e}$, $d_{N,\ve} =  d^0_{N,\ve}  +d^1_{N, \e}$, $e_\ve=e_{0,\ve}+e_{1,\ve}$ and where $\mu_{0,\ve}$, $d^0_{N,\ve}$, $e_{0,\ve}$ and $w_{1,\ve}$ have already been constructed  in the previous step.
  Observe that a Taylor expansion yields
\begin{align}\label{baruexp}
\bar U(\xi) =&U\left( \bar{\xi},\xi_N+2\frac{ \ve (d^0_{N,\ve}  +d^1_{N, \e})}{\rho (\mu_{0,\ve}+\mu_{1,\e})} \right)=U\big( \bar{\xi},\xi_N+2\frac{ \ve d^0_{N,\ve}}{\rho \mu_{0,\ve}} \big)\nonumber\\
 +& 2\frac\e\rho\partial_NU\big( \bar{\xi},\xi_N+2\frac{ \ve d^0_{N,\ve}}{\rho \mu_{0,\ve}} \big)\left\{ \frac{  d^0_{N,\ve}   }{ \mu_{0,\ve} }\left(\frac{d^1_{N, \e}}{d^0_{N, \e}}-\frac{\mu_{1,\e}}{\mu_{0,\e}}\right)+O\left(\frac{d^1_{N, \e}}{d^0_{N, \e}}-\frac{\mu_{1,\e}}{\mu_{0,\e}}\right)^2\right\}.
\end{align}

Computing ${\mathcal S}_\e (v_{2,\e})$ (see \equ{Sep}) we get
\begin{eqnarray}\label{rivoli4}
{\mathcal S}_\e (v_{2,\e})=
\mathcal{L}_{\e} w_{2, \e}+ h_{2, \e}+  \ve^3 \mathcal{E}_{2,\e}  +Q_\ve (w_{2,\e} )
\end{eqnarray}
where $\mathcal{L}_{\e}$ is defined in \equ{rivoli3} and  the function $h_{2,\e}$ is given by
\begin{align}\label{w2epsilon}
 h_{2,\ve } =  & -2\ve d^1_{N,\ve}  H_{ij}\partial_{ij}^2U +\rho \, \mu_{1,\e}\ \left[-2\xi_NH_{ij}\partial_{ij}^2U+H_{\alpha\alpha}\partial_NU\right]-\lambda_1\,\e\,e_{1,\e}\, Z_0\nonumber\\
 &-\ve\frac{N-2}{2}\frac{\mu_{1,\e}}{\mu_{0,\e}}U^p+ \tilde{f}_{2\ve} +\tilde{h}_{2\ve}(y,\xi,\mu_{0,\ve},d^0_{N,\ve},e_{0,\ve})
\end{align}
where
$$
\tilde{f}_{2\ve}=2pU^{p-1}\partial_N U\big( \bar{\xi},\xi_N+2\frac{ \ve d^0_{N,\ve}}{\rho \mu_{0,\ve}} \big)\frac{ \ve  d^0_{N,\ve}   }{\rho \mu_{0,\ve} }\left[\frac{d^1_{N, \e}}{d^0_{N, \e}}-\frac{\mu_{1,\e}}{\mu_{0,\e}}\right],
$$
and $\tilde{h}_{2\ve}$ is a smooth function on its variables and is even in the variable $\bar \xi\in \R^{N-1}$, which  implies in particular that
\begin{eqnarray}\label{tildef2epro1}
\int_{\hat\DD} \tilde{h}_{2\ve} Z_jd\xi=0
\quad j=1,\ldots,N-1,
\end{eqnarray}
Moreover we can easily show that
\begin{eqnarray}\label{tildef2epro10}
\int_{\hat\DD} \tilde{h}_{2\ve} Z_0d\xi=\ve^2\vartheta_1(\mu_{0,\ve},d_{N,\ve}^0,e_{0,\ve}), \quad
\int_{\hat\DD} \tilde{h}_{2\ve} Z_{N+1}d\xi= \ve^2\vartheta_2(\mu_{0,\ve},d_{N,\ve}^0,e_{0,\ve}),
\end{eqnarray}
and
\begin{eqnarray}\label{tildef2epro3}
\int_{\hat\DD} \tilde{h}_{2\ve} Z_N d\xi=\ve \rho\vartheta_3(\mu_{0,\ve},d_{N,\ve}^0,e_{0,\ve}).
\end{eqnarray}
where where $\vartheta_i(\mu_{0,\ve},d_{N,\ve}^0,e_{0,\ve}), i=1,2,3$ are some uniformly bounded functions.
In \equ{rivoli4} the term $ \mathcal{E}_{2,\e}$ can be described as
the sum of functions of the form (\ref{youjieas}).
Finally the term $Q_\ve (w_{2,\e} )$ is a sum of quadratic terms in
$w_{2,\e}$ like
$$
(\mu_{0,\ve}+\mu_{1,\ve})^{\frac{N-2}{2}\ve}\left[-(U-\bar{U} + w_{1,\e} + w_{2,\e}  +\ve e_\ve \chi_\ve(\xi)Z_0)^{p-\ve} \right.
$$
$$\left.+ (U-\bar{U} + w_{1,\e} +\ve e_\ve \chi_\ve(\xi)Z_0)^{p-\ve}  +(p-\ve) (U-\bar{U} +
w_{1,\e}+\ve e_\ve \chi_\ve(\xi)Z_0 )^{p-1-\ve}  w_{2,\e}\right]
$$
and linear terms in $w_{2,\e}$ multiplied by a term of order
$\ve$, like
$$
(p-\ve) \left( (U-\bar{U} + w_{1,\ve} +\ve e_\ve \chi_\ve(\xi)Z_0)^{p-1-\ve} - U^{p-1-\ve} \right) w_{2,\ve}.
$$

\medskip
First  we define  $\mu_{1,\ve}, d^1_{N,\ve},e_{1,\ve}$.
Similar computations as in \equ{proh1zn1}-\equ{proh1z0h} yields
$$
\int_{\hat\DD}h_{2\ve}Z_{N+1}d\xi
  =  -\ve A_1 \left( { \mu_{0,\ve} \over   d_{N,\ve}^0} \right)^{N-2}\left[
(N-2)\left(\frac{\mu_{1,\ve}}{\mu_{0,\ve}}-\frac{d_{N,\ve}^1}{d_{N,\ve}^0}\right)  +
O\left(\ve^{1\over N-2}\right) \right](1+o(1))
$$
$$
\int_{\hat\DD}h_{2\ve}Z_{N}d\xi  =  \ve^{1+\frac{1}{N-2}} A_3(N-1)
  \left( { \mu_{0,\ve} \over   d_{N,\ve}^0} \right)^{N-1} \left(\frac{\mu_{1,\ve}}{\mu_{0,\ve}}-\frac{d_{N,\ve}^1}{d_{N,\ve}^0}\right)(1+o(1))
$$
$$
\int_{\hat\DD}h_{2\ve}Z_0d\xi =  \ve A_4 \left( { \mu_{0,\ve} \over   d_{N,\ve}^0} \right)^{N-2}\left[
(N-2)\left(\frac{\mu_{1,\ve}}{\mu_{0,\ve}}-\frac{d_{N,\ve}^1}{d_{N,\ve}^0}\right)  +
O\left(\ve^{1\over N-2}\right) \right](1+o(1)).
$$
We choose $\mu_{1,\ve}, d^1_{N,\ve},e_{1,\ve}$ so that
\begin{eqnarray}\label{tildef2epro1k}
\int_{\hat\DD} h_{2,\ve} Z_{l}d\xi=0,\quad l=0, N,N+1.
\end{eqnarray}
We can easily see that the above orthogonality conditions are fulfilled provided we choose the parameters  $\mu_{1,\ve}, d_{N,\ve}^1, e_{1,\ve}$ to solve the following system
\begin{equation}\label{systemCtgh}
\left\{\begin{array}{llll}
(N-2)A_1\frac{\mu_{0,\ve}^{N-3}}{(d_{N,\ve}^0)^{N-2}}\mu_{1,\ve}- (N-2)A_1\frac{\mu_{0,\ve}^{N-2}}{(d_{N,\ve}^0)^{N-1}}d_{N,\ve}
\cr
+ \ve^{\frac 1{N-2}}g_{N+1} \left ( \frac{\mu_{1,\ve}}{d_{N,\ve}^1}\right )=  \ve \Re_1(\mu_{0,\ve},d_{N,\ve}^0,e_{0,\ve}) \cr \\[3mm]
 \left[A_6H_{\a\a}-(N-1)A_3\frac{\mu_{0,\ve}^{N-2}}{(d_{N,\ve}^0)^{N-1}}\right]\mu_{1,\ve}+(N-1)A_3\frac{\mu_{0,\ve}^{N-1}}{(d_{N,\ve}^0)^{N}}d_{N,\ve}^1
\\[3mm] + \ve^{\frac 1{N-2}}g_{N} \left ( \frac{\mu_{1,\ve}}{d_{N,\ve}^1}\right )=
\ve\Re_2(\mu_{0,\ve},d_{N,\ve}^0,e_{0,\ve})
 \cr \\[3mm]
(N-2)A_4\frac{\mu_{0,\ve}^{N-2}}{(d_{N,\ve}^0)^{N-2}}\left(\frac{\mu_{1,\ve}}{\mu_{0,\ve}}-\frac{d_{N,\ve}^1}{d_{N,\ve}^0}\right)
+A_5-A_7\log(\mu_{1,\ve})-
\lambda_1 e_{1,\ve}  \\[3mm]
- 2H_{jj}  d_{N,\ve}^1\int_{\R^N}\partial_{jj}^2 UZ_0d\xi
     + \ve^{\frac 1{N-2}}g_{0} \left ( \frac{\mu_{1,\ve}}{d_{N,\ve}^1}\right )=\ve\Re_3(\mu_{0,\ve},d_{N,\ve}^0,e_{0,\ve}) \ ,
\end{array}\right.
\end{equation}
where $\Re_i$, $i=1,2,3$ are some smooth uniformly bounded functions.
Arguing as in the first step we can show that the above system is solvable and the solution $( \mu_{1,\ve}  , d_{N,\ve}^1 ,
e_{1,\ve} )$ has the form
\begin{equation}\label{choiceofmudea}
\mu_{1,\ve} = \tilde{\mu}_{1,\ve} + \ve^{1\over N-2} \overline{\mu}_{1,\ve}, \quad  d_{N,\ve}^1 = \tilde{d}^1_{N,\ve} +
\ve^{1\over N-2} \overline{d}^1_{N,\ve}, \quad   e_{1,\ve}   = {\tilde{e}}_{1,\ve} + \ve^{1\over N-2} \overline{e}_{1,\ve},
\end{equation}
where $(\tilde{\mu}_{1,\ve}$, $\tilde{d}^1_{N,\ve},\tilde{e}_{1,\ve})$ is a solution of
\begin{equation}\label{systemCtghja}
\ \left\{\begin{array}{llll}
\tilde{\mu}_{1,\ve}-  \frac{\mu_{0,\ve} }{ d_{N,\ve}^0 }\tilde{d}_{N,\ve}^1
=  \ve\tilde{\Re}_1(\mu_{0,\ve},d_{N,\ve}^0,e_{0,\ve}) \cr \\[3mm]
 \left[A_6H_{\a\a}-(N-1)A_3\frac{\mu_{0,\ve}^{N-2}}{(d_{N,\ve}^0)^{N-1}}\right]\tilde{\mu}_{1,\ve}
 +(N-1)A_3\frac{\mu_{0,\ve}^{N-1}}{(d_{N,\ve}^0)^{N}}\tilde{d}_{N,\ve}^1
 = \ve\Re_2(\mu_{0,\ve},d_{N,\ve}^0,e_{0,\ve})
 \cr \\[3mm]
(N-2)A_4\frac{\mu_{0,\ve}^{N-2}}{(d_{N,\ve}^0)^{N-2}}\left(\frac{\tilde{\mu}_{1,\ve}}{\mu_{0,\ve}}-\frac{\tilde{d}_{N,\ve}^1}{d_{N,\ve}^0}\right)
+A_5-A_7\log(\tilde{\mu}_{1,\ve})-
\lambda_1 \tilde{e}_{1,\ve}  \\[3mm]
- 2H_{jj}  \tilde{d}_{N,\ve}^1\int_{\R^N}\partial_{jj}^2 UZ_0d\xi
     = \ve \Re_3(\mu_{0,\ve},d_{N,\ve}^0,e_{0,\ve}) \ ,
\end{array}\right.
\end{equation}
where $\tilde{\Re}_1=\frac{1}{(N-2)A_1}\frac{(d_{N,\ve}^0)^{N-2}}{\mu_{0,\ve}^{N-3}}\Re_1$. Indeed,
the first two equations in \eqref{systemCtgh} can be rewritten in the following form
\begin{equation}\label{systemCtghjeq}
M\left(
   \begin{array}{c}
     \tilde{\mu}_{1,\ve} \\[3mm]
     \tilde{d}_{N,\ve}^1 \\
   \end{array}
 \right)=
 \ve \left(
   \begin{array}{c}
     \tilde{\Re}_1(\mu_{0,\ve},d_{N,\ve}^0,e_{0,\ve}) \\[3mm]
     \Re_2(\mu_{0,\ve},d_{N,\ve}^0,e_{0,\ve}) \\
   \end{array}
 \right)
\end{equation}
with the matrix
\begin{equation}\label{systematrix}
M=\left(
          \begin{array}{cc}
            1 & -\frac{\mu_{0,\ve} }{ d_{N,\ve}^0 } \\[3mm]
             A_6H_{\a\a}-(N-1)A_3\frac{\mu_{0,\ve}^{N-2}}{(d_{N,\ve}^0)^{N-1}}  & (N-1)A_3\frac{\mu_{0,\ve}^{N-1}}{(d_{N,\ve}^0)^{N}} \\[3mm]
          \end{array}
        \right)
\end{equation}
which is clearly invertible since $\det(M)=A_6H_{\a\a}\,\frac{\mu_{0,\ve} }{ d_{N,\ve}^0 }\neq 0$. Thus we can get the existence of $\mu_{1,\ve}$ and $d_{N,\ve}^1$ in  (\ref{systemCtghja}), and we then get the existence of $e_{1,\ve}$ from the third equation in (\ref{systemCtghja}).
Moreover,
we have the following estimates
\begin{equation*}
\label{emu1e} \| \mu_{1,\e}\|_{L^\infty (K)} +\| \partial_a
\mu_{1,\e}\|_{L^\infty (K)} +\|\partial^2_a \mu_{1,\e}\|_{L^\infty
(K)} \leq C \ve,
\end{equation*}
\begin{equation*}
\label{emu1e} \| d^1_{N,\e}\|_{L^\infty (K)} +\| \partial_a
d^1_{N,\e}\|_{L^\infty (K)} +\|\partial^2_a d^1_{N,\e}\|_{L^\infty
(K)} \leq C \ve
\end{equation*}
and
\begin{equation*}
\label{e1e} \| e_{1,\e}\|_{L^\infty (K)} +\| \partial_a
e_{1,\e}\|_{L^\infty (K)} +\|\partial^2_a e_{1,\e}\|_{L^\infty
(K)} \leq C \ve.
\end{equation*}
Observe now the following
\begin{itemize}
\item   from (\ref{tildef2epro1}) and using the facts that $\partial_{jj}^2U$ is even with respect to $\bar\xi$,  we have
\begin{eqnarray}\label{tildef2epro1l}
\int_{\hat\DD} h_{2,\ve} Z_jd\xi=0,\quad j=1,\ldots,N-1.
\end{eqnarray}
\item given the choice of the parameters \eqref{choiceofmudea}, the linear operator defined in \eqref{rivoli4} by \eqref{rivoli3}, which  depends on $\mu_\e$, $d_{N,\e}$ and $e_\e$, satisfies the assumptions of Proposition \ref{linear}.
\end{itemize}
Henceforth, we apply the result of Proposition \ref{linear} to define   $w_{2,\e}$ to solve
\begin{equation}\label{edith2}
    \mathcal{L}_{\ve}w_{2,\e} =- h_{2,\e} \quad\hbox{ in } \DD\qquad
     w_{2,\e}  = 0, \ \,  \quad\hbox{ on } \partial\DD.
\end{equation}
Since,  for a given $\sigma \in (0,1)$,
$
\|   h_{2,\e} \|_{\e, 4, \sigma} \leq C \ve^2,
$
we have that
\begin{equation}
\label{ew2} \| D^2_\xi w_{2,\e} \|_{\ve , 4 , \sigma } + \| D_\xi
w_{2,\e} \|_{\ve , 3, \sigma } +\|w_{2,\e} \|_{\ve, 2 ,
\sigma}\le C \ve^{2}
\end{equation}
and that there exists a positive constant $\beta$ (depending only on
$\Omega, K$ and $n$) such that for any integer $\ell$ there holds
\begin{equation}\label{eq:estw2}
    \|\nabla^{(\ell)}_{y} w_{2,\e}(z,\cdot)\|_{\ve,2,\sigma} \leq \beta C_\ell \,
    \ve^{2} \qquad \quad \rho y = z \in K_\rho
\end{equation}
where $C_\ell$ depends only on $\ell$, $p$, $K$ and $\Omega$.

\medskip
With this choice of $\mu_{1,\e}$, $e_{1,\ve}$, $d^1_{N,\ve}$ and $w_{2, \e}$ we get that
$$
\| - {\mathcal A}_{\mu_\ve , d_\e } v_{2,\e}  - \mu_\ve^{ \frac{N-2}{2}\ve}\,v_{2,\e}^{p- \ve} \|_{\e, 4 , \sigma} \leq C \e^3.
$$

\medskip

\subsubsection*{\bf  Construction of $w_{3,\e}$ and choice of $\mu_{2,\e}$, $d^2_{N,\e}$, $e_{2,\ve}$ and $d^{0}_{j,\ve}$, $l=1,\ldots,N-1$ }

\

We define
$$v_{3,\e}(z, \xi)= U(\xi)-\bar{U}\left( \xi \right) +
  w_{1,\e}\left(z, \xi \right)
  +   w_{2,\e}\left(z, \xi \right)+   w_{3,\e}\left(z, \xi \right)+\ve e_\ve \chi_{\ve}(\xi) Z_0$$
  where $\mu_\e= \mu_{0,\ve}+\mu_{1,\e}+\mu_{2,\ve}$, $e_\ve=e_{0,\ve}+e_{1,\ve}+e_{2,\ve}$, $d_{N,\ve} =  d^0_{N,\e}+d^1_{N,\e}+d^2_{N,\e}$, $d_{l,\ve}=d^{0}_{1,\ve}$, $l=1,\ldots,N-1$. We remind that
 $\mu_{0,\ve},\mu_{1,\ve}$, $e_{0,\ve}, e_{1,\ve}$, $d^0_{N,\e}, d^1_{N,\e}$ and $w_{1,\ve}, w_{2,\ve}$ have already been constructed  in the previous steps.
Computing ${\mathcal S}_\e (v_{3,\e})$ (see \equ{Sep}) we get
\begin{equation}\label{rivoli43}
\mathcal{S}_\ve (v_{3,\e})  = \mathcal{L}_{\e} w_{3, \e}-  h_{3, \e}+ \ve^4 \mathcal{E}_{3,\e}  +   Q_\ve (w_{3,\e} )
\end{equation}
where $\mathcal{L}_{\e}$ is defined in \equ{rivoli3},  and  the function $h_{3,\e}$ is given by
\begin{align}\label{w2epsilon3}
h_{3,\ve } =&   -2\ve d^2_{N,\ve}  H_{ij}\partial_{ij}^2U +\rho \, \mu_{2,\e}\ \left\{-2\xi_NH_{ij}\partial_{ij}^2U+H_{\alpha\alpha}\partial_NU\right\}-\lambda_1\,\e\,e_{2,\e}\, Z_0-\ve\frac{N-2}{2}\frac{\mu_{2,\e}}{\mu_{0,\e}}U^p\nonumber\\
 &+ 2pU^{p-1}\partial_N U\big( \bar{\xi},\xi_N+2\frac{ \ve d^0_{N,\ve}}{\rho \mu_{0,\ve}} \big)\frac{ \ve  d^0_{N,\ve}   }{\rho \mu_{0,\ve} }\left[\frac{d^2_{N, \e}}{d^0_{N, \e}}-\frac{\mu_{2,\e}}{\mu_{0,\e}}\right]+\ve^2\rho\Xi_3(d^0_{j,\ve})\nonumber\\
 &+\tilde{h}_{3\ve}(y,\xi,\mu_{0,\ve},\mu_{1,\ve},d^0_{N,\ve},d^1_{N,\ve},e_{0,\ve},e_{1,\ve})
\end{align}
with the function $\tilde{h}_{3,\ve}$ satisfying
\begin{equation}\label{tildef2epro13}
\int_{\hat\DD} \tilde{h}_{3,\ve} Z_jd\xi= O(\ve^2\rho)  ,\quad j=1,\ldots,N-1,
\end{equation}
and
\begin{equation}\label{tildef2epro23}
\int_{\hat\DD} \tilde{h}_{3,\ve} Z_{N+1}d\xi=O(\ve^3),
\qquad \int_{\hat\DD} \tilde{h}_{3,\ve} Z_N d\xi=O(\ve^2 \rho),
\qquad \int_{\hat\DD} \tilde{h}_{3,\ve} Z_0 d\xi=O(\ve^3).
\end{equation}
In \equ{w2epsilon3}, $\Xi_3(d^0_{j,\ve})$ is given by
\begin{align*}
\Xi_3(d^0_{j,\ve}) =& \left\{-\mu_{0,\ve}\partial_jU\triangle_K d^0_{j,\ve}+\gamma(1+\gamma) \nabla_K\mu_{0,\ve} \nabla_Kd^0_{j,\ve} \partial_jU+2\nabla_K\mu_{0,\ve}\nabla_Kd^0_{j,\ve}\partial_{jl}^2U\xi_l\right.\nonumber\\[3mm]
 &  \left. -2\mu_{0,\ve}\tilde{g}^{ab}\frac{1}{\rho}\partial_{\bar{a}j}^2U\partial_bd^0_{j,\ve}-\frac{1}{3}\mu_\ve R_{mijl}(\xi_md^0_{l,\ve}+\xi_ld^0_{m,\ve})\partial_{ij}^2U
+\mu_{0,\ve}\mathfrak{D}_{Nl}^{ij}\xi_Nd^0_{l,\ve}\partial_{ij}^2U\right.\nonumber\\[3mm]
 & \left.+\mu_{0,\ve}[\frac{2}{3}R_{mllj}+\tilde{g}^{ab}R_{jabm}-\Gamma^c_{am}\Gamma^{a}_{cj}]d^0_{m,\ve}\partial_jv
-2\mu_{0,\ve}\xi_N(H_{aj}+\tilde{g}^{ac}H_{cj})\partial_ad^0_{l,\ve}\partial_{jl}^2U\right\}.
\end{align*}
In \equ{rivoli43} the term $ \mathcal{E}_{3,\e}$ can be described as
the sum of functions of the form (\ref{youjieas}).

Finally the term $Q_\ve (w_{3,\e} )$ is a sum of quadratic terms in
$w_{2,\e}$ like
$$
(\mu_{0,\ve}+\mu_{1,\ve}+\mu_{2,\ve})^{\frac{N-2}{2}\ve}\left[ (U-\bar{U} + w_{1,\e} + w_{2,\e}+ w_{3,\e}+\ve e_\ve Z_0 )^{p-\ve}- (U-\bar{U} + w_{1,\e}+ w_{2,\e}+\ve e_\ve Z_0  )^{p-\ve}\right.
$$
$$
\left.-(p-\ve) (U-\bar{U} +
w_{1,\e}+ w_{2,\e}+\ve e_\ve Z_0  )^{p-1-\ve}  w_{3,\e}\right]
$$
and linear terms in $w_{3,\e}$ multiplied by a term of order
$\ve$, like
$$
(p-\ve) \left( (U-\bar{U} + w_{1,\ve}+ w_{2,\e}+\ve e_\ve Z_0  )^{p-1-\ve} - U^{p-1-\ve} \right) w_{3,\ve}.
$$

\medskip
We now proceed with the choice of $\mu_{2,\ve}, d_{N,\ve}^2, e_{2,\e}$ and $d^0_{l,\ve}$, $l=1, \ldots , N-1$.

\noindent  {\it Projection onto $Z_{N+1}, Z_{N}, Z_{0}$ and choice of
$\mu_{2,\ve}, d_{N,\ve}^2, e_{2,\e}$}. \ \  Arguing as in the last step of the iteration we can prove that the three orthogonality conditions
$
 \int_{ \DD} h_{3,\ve}Z_{l}=0,\quad l=0, N, N+1.
$
 are guaranteed choosing the parameters $\mu_{2,\ve}, d_{N,\ve}^2, e_{2,\e}$, to be  solutions of the following system
$$
M\left(
   \begin{array}{c}
     \mu_{2,\ve} \\[3mm]
     d_{N,\ve}^2 \\
   \end{array}
 \right)=
 \ve^2 \left(
   \begin{array}{c}
     \tilde{\Re}_{13}(\mu_{0,\ve},\mu_{1,\ve};d_{N,\ve}^0,d_{N,\ve}^1;e_{0,\ve},e_{1,\ve}) \\[3mm]
     \Re_{23}(\mu_{0,\ve},\mu_{1,\ve};d_{N,\ve}^0,d_{N,\ve}^1;e_{0,\ve},e_{1,\ve}) \\
   \end{array}
 \right)
$$
and
\begin{eqnarray}\label{systemCtghj}
&&(N-2)A_4\frac{\mu_{0,\ve}^{N-2}}{(d_{N,\ve}^0)^{N-2}}\left(\frac{\mu_{2,\ve}}{\mu_{0,\ve}}-\frac{d_{N,\ve}^2}{d_{N,\ve}^0}\right)
+A_5-A_7\log(\mu_{2,\ve})-
\lambda_1 e_{2,\ve}
\nonumber\\
& -& 2H_{jj}  d_{N,\ve}^2\int_{\R^N}\partial_{jj}^2 UZ_0d\xi=\ve^2 \Re_{33}(\mu_{0,\ve},\mu_{1,\ve};d_{N,\ve}^0,d_{N,\ve}^1;e_{0,\ve},e_{1,\ve}). \nonumber
\end{eqnarray}
where the matrix $M$ was defined in (\ref{systematrix}). Arguing as in the previous step, we can get the existence and smoothness of   $\mu_{2,\ve}, d_{N,\ve}^2$,   $e_{2,\ve}$, solutions of the above system. Moreover, we have the validity of the following bounds on such parameters
\begin{equation*}
\label{emu1e3} \| \mu_{2,\e}\|_{L^\infty (K)} +\| \partial_a
\mu_{2,\e}\|_{L^\infty (K)} +\|\partial^2_a \mu_{2,\e}\|_{L^\infty
(K)} \leq C \ve^2,
\end{equation*}
\begin{equation*}
\label{emu1e3} \| d^2_{N,\e}\|_{L^\infty (K)} +\| \partial_a
d^2_{N,\e}\|_{L^\infty (K)} +\|\partial^2_a d^2_{N,\e}\|_{L^\infty
(K)} \leq C \ve^2,
\end{equation*}
and
\begin{equation*}
\label{emu1e3} \| e_{2,\e}\|_{L^\infty (K)} +\| \partial_a
e_{2,\e}\|_{L^\infty (K)} +\|\partial^2_a e_{2,\e}\|_{L^\infty
(K)} \leq C \ve^2.
\end{equation*}

\medskip
\noindent  {\it Projection onto $Z_l$ and choice of
$d^0_{l,\ve}$:}\ \  Multiplying $h_{3,\ve}$ by $Z_l = \pa_l U$,
integrating over $ \DD$ and using the fact $U$ is even in the
variable $\ov\xi$, one obtains
\begin{align}\label{projZl}
 \int_{\hat \DD}h_{3,\ve}\,Z_l  =&\e^2\rho\left\{ - \,  \mu_{0,\ve} \,\Delta_K d^0_{j,\ve}\,\int_{ \hat\DD} \pa_jU \pa_lU+\ve \int_{\hat \DD}\mathfrak{G}_{2,\e}\,\pa_l U \right.\nonumber\\[2mm]
    &-  {1\over 3}\,\mu_{0,\ve}\, R_{mijs}\int_{ \hat\DD}(\xi_md^0_{s,\ve}+\xi_sd^0_{m,\ve})\,\del^2_{ij}U  \pa_lU\\[2mm]
  & +\left.  \mu_{0,\ve}\,\left[\frac23\, R_{mssj}\,d^0_{m,\ve}+  \bigg( \tilde g_\e^{ab}\,R_{maaj}- \G_a^c(E_m) \G_c^a(E_j)  \bigg)
\,d^0_{m,\ve}\right]\, \int_{ \hat\DD}\pa_j U\,\pa_lU\right\}+O(\ve^2\rho).\nonumber
\end{align}
First of all, observe that  by oddness in $\ov\xi$ we have that
$$
\int_{ \hat\DD} \pa_jU\pa_lU=\d_{lj}\,\left(\int_{\R^N } |\pa_lU|^2+O(\e^{N-2})\right)=
\d_{lj}\, C_0+O(\e^{N-2})
$$
with
$
C_0:= \int_{\R^N } |\pa_lw_0|^2.
$
On the other hand the integral $\int_{\hat\DD} \xi_m
\,\del^2_{ij}U  \pa_lU$ is non-zero only if, either $i = j$ and
$m = l$, or $i = l$ and $j = m$, or $i = m$ and $j = l$. In the
latter case we have $R_{mijs}=0$ (by the antisymmetry of the
curvature tensor in the first two indices). Therefore, the first
term of the second line of the above formula becomes simply
\begin{eqnarray*}
&&  \sum_{ijms}R_{mijs}\int_{\hat\DD}\xi_m  d^0_{s,\ve}\,\del^2_{ij}U  \pa_lU\\
 &=&
  \sum_{is} R_{liis} d^0_{s,\ve} \int_{\hat\DD} \xi_l \partial_{l}
U\partial^2_{ii} Ud \xi+  \sum_{js} R_{jljs} d^0_{s,\ve}
  \int_{\hat \DD} \xi_j \partial_{l}
 U\partial^2_{lj} U d \xi  \\
& =&
  \sum_{is} R_{liis} d^0_{s,\ve} \int_{\R^N } \xi_l \partial_{l}
U \partial^2_{ii} U d \xi   +  \sum_{js} R_{jljs} d^0_{s,\ve}
  \int_{\R^N } \xi_j \partial_{l}
 U \partial^2_{lj} U d \xi+O(\e^{N-2}).
\end{eqnarray*}
Observe that, integrating by parts, when $l \neq i$ (otherwise
$R_{liis}=0$) there holds
\begin{eqnarray*}
  \int_{\hat\DD} \xi_l \partial_{l} U \partial^2_{ii}
Ud \xi
  = \int_{\R^N} \xi_l \partial_{l} U \partial^2_{ii}
U d \xi +O(\e^{N-2}) = - \int_{\R^N} \xi_l \partial_{i}
U \partial^2_{li} U d \xi+O(\e^{N-2}).
\end{eqnarray*}
Hence, still by the antisymmetry of the curvature tensor we obtain that
$$
 \sum_{ijms}R_{mijs}\int_{\hat\DD}\xi_m  d^0_{s,\ve}\,\del^2_{ij}U  \pa_lU=-2  \sum_{is}R_{liis}d_{s,\e}^0\bigg( \int_{\R^N} \xi_l \partial_{i}
U \partial^2_{li} U d \xi+O(\e^{N-2})\bigg).
$$
Then the second line in Formula \eqref{projZl} becomes (permuting the indices $s$ and $m$ in the above
argument)
\begin{eqnarray*}
&&-   {1\over 3}\,\mu_{0,\ve}\, \sum_{ijms}R_{mijs}\int_{\hat\DD}(\xi_m  d^0_{s,\ve}+\xi_s  d^0_{m,\ve})\,\del^2_{ij}U  \pa_lU\\
&=&  {4\over 3}\,\mu_{0,\ve}\,  \sum_{is}R_{liis}d_{s,\e}^0\bigg( \int_{\R^N} \xi_l \partial_{i}
U \partial^2_{li} U d \xi+O(\e^{N-2})\bigg)= -\,{2\over 3}\,\mu_{0,\ve}\,  \sum_{is}R_{liis}d_{s,\e}^0\bigg( C_0+O(\e^{N-2})\bigg)
\end{eqnarray*}
Collecting the above computations, we conclude that
\begin{eqnarray*}
-{1\over 3}\,\mu_{0,\ve}\,R_{mijl}\int_{\hat\DD} (\xi_m  d^0_{l,\ve}
+\xi_l  d^0_{m,\ve} )\,\del^2_{ij}U  \pa_lU+
 \frac23\,\mu_{0,\ve}\, R_{mssj}\,d^0_{m,\ve}\int_{\hat\DD}\pa_jU\,\pa_lU=O(\e^{N-2}).
\end{eqnarray*}
Hence formula \eqref{projZl} becomes simply
\begin{align*}
[\mu_{0,\ve}\e^2\rho]^{-1}\int_{\hat\DD}h_{3,\e}\,\pa_l U =& -     C_0\,\Delta_K\,d^0_{l,\ve}+    C_0\,\bigg( \tilde g_\e^{ab}\,R_{maal}- \G_a^c(E_m) \G_c^a(E_l)  + O(\e^{N-2}) \bigg)
\,d^0_{m,\ve}\\
 +&  \int_{\hat\DD}\mathfrak{G}_{2,\e}\,\pa_l w_0.
\end{align*}
We thus obtain that $h_{3,\e}$, the
right-hand side of \eqref{edith2}, is $L^2$-orthogonal to $Z_l$
($l=1,\cdots,N-1$) if and only if $d^0_{l,\ve}$ satisfies an
equation of the form
\begin{equation}\label{phi1def}
  \Delta_K \,d^0_{l,\ve}-\bigg( \tilde g_\e^{ab}\,R_{maal}- \G_a^c(E_m) \G_c^a(E_l) + O(\e^{N-2} ) \bigg)
\,d^0_{m,\ve} = G_{2,\e}(\rho z),
\end{equation}
for some smooth function $G_{2,\e}$, whose $L^\infty$ norm on $K$ is bounded by a fixed constant, as $\e \to 0$. Observe that
the operator acting on $d^0_{l,\ve}$ in the left hand side is
nothing but the Jacobi operator of the sub manifold $K$. By our assumption,  $K$ is non degenerate and hence this operator is invertible.  This implies the solvability of the
above equation in $d^0_{l,\ve}$.
Furthermore, equation \equ{phi1def} defines $d^0_{l,\ve}$ as a
smooth function on $K$, with
\begin{equation}
\label{ePhi1e} \| d^0_{l,\ve}\|_{L^\infty (K)} +\| \partial_a
d^0_{l,\ve}\|_{L^\infty (K)} +\|\partial^2_a d^0_{l,\ve}\|_{L^\infty
(K)} \leq C \quad l=1,\ldots,N-1.
\end{equation}

\medskip
\noindent
Given the choice of the parameters $\mu_{2,\e}, d^2_{N,\ve}$, $e_{2,\ve}$ and $d^0_{l,\ve}(l=1,\ldots,N-1)$, the linear operator defined in \eqref{rivoli43} by \eqref{rivoli3}, which  depends on $\mu_\e$, $d_{N,\e}$ and $e_\e$, satisfies the assumptions of Proposition \ref{linear}. Furthermore, we have the existence of $w_{3,\e}$ solution to
\begin{equation}\label{edith23}
   \mathcal{L}_{\ve}w_{3,\e} =   h_{3,\e}  \quad\hbox{ in } \DD,\qquad
     w_{3,\e}  = 0, \ \  \hbox{ on } \partial\DD.
\end{equation}
Moreover, for a given $\sigma \in (0,1)$ we have
$
\|   h_{3,\e} \|_{\e, 4, \sigma} \leq C \ve^3.
$
Proposition \ref{linear} thus gives then that
\begin{equation}
\label{ew2} \| D^2_\xi w_{3,\e} \|_{\ve , 4, \sigma } + \| D_\xi
w_{3,\e} \|_{\ve , 3 , \sigma } +\|w_{3,\e} \|_{\ve, 2 ,
\sigma}\le C \ve^{3}
\end{equation}
and that there exists a positive constant $\beta$ (depending only on
$\Omega, K$ and $n$) such that for any integer $\ell$ there holds
\begin{equation}\label{eq:estw2}
    \|\nabla^{(\ell)}_{y} w_{3,\e}(z,\cdot)\|_{\ve,N-3,\sigma} \leq \beta C_\ell \,
    \ve^{3} \qquad \quad y =\rho z \in K.
\end{equation}
where $C_\ell$ depends only on $\ell$, $p$, $K$ and $\Omega$.
Moroever, we have that
$$
\| - {\mathcal A}_{\mu_\ve , d_\e } v_{3,\e}   - \mu_\ve^{\frac{N-2}{2}\ve}\,v_{3,\e}^{p- \ve} \|_{\e, 4 , \sigma} \leq C \e^4.
$$

\medskip
\subsubsection*{\bf Expansion at an arbitrary order}

\noindent We take now an arbitrary integer $I$, we let
\begin{equation}\label{eqmu}
 \mu_\ve:= \mu_{0,\ve}+\mu_{1,\e}+\cdots +\mu_{I-1,\e} + \mu_{I, \ve},
\end{equation}
$$
d_{l,\ve}=d^0_{l,\ve}+\ldots+d_{l,\ve}^{I-2}.\quad l=1,\ldots,N-1;\qquad
d_{N,\ve}=d^0_{N,\ve}+\ldots+d_{N,\ve}^I
$$
and
$$
e_\ve=e_{0,\ve}+e_{1,\ve}+\cdots+e_{I,\ve}
$$
and we define
\begin{equation}\label{eqWWW}
v_{I+1, \ve} = U (\xi ) -\bar{U}(\xi)+ w_{1, \ve} (z, \xi) +
\ldots + w_{I, \ve} (z, \xi ) + w_{I+1, \ve} (z, \xi )+\ve e_\ve \chi_\ve Z_0
\end{equation}
where $\mu_{0,\ve},  \mu_{1,\e} , \cdots ,  \mu_{I-1,\e}$, $d^1_{l,\e},
\cdots , d^{I-3}_{l,\e}$, $d^0_{N,\ve},\ldots,d^{I-1}_{N,\ve}$, $e_{0,\ve},e_{1,\ve},\ldots,e_{I-1,\ve}$ and $w_{1, \ve}, .. ,w_{I, \ve}$ have
already been constructed following an iterative scheme, as described
in the previous steps of the construction.

In particular one has, for any $i=1, \ldots , I-1$,
\begin{eqnarray*}
&& \| \mu_{i,\e}\|_{L^\infty (K)} +\| \partial_a
\mu_{i,\e}\|_{L^\infty (K)} +\|\partial^2_a \mu_{i,\e}\|_{L^\infty
(K)} \leq C \ve^{i},\\[3mm]
&&  \| d^i_{N,\e}\|_{L^\infty (K)} +\| \partial_a
d^i_{N,\e}\|_{L^\infty (K)} +\|\partial^2_a d^i_{N,\e}\|_{L^\infty
(K)} \leq C \ve^{i-1},\\[3mm]
&& \| d^i_{l,\e}\|_{L^\infty (K)} +\| \partial_a
d^i_{l,\e}\|_{L^\infty (K)} +\|\partial^2_a d^i_{l,\e}\|_{L^\infty
(K)} \leq C \ve^{{i-1}},\quad l=1,\ldots,N-1,\\[3mm]
&& \| e_{i,\e}\|_{L^\infty (K)} +\| \partial_a
e_{i,\e}\|_{L^\infty (K)} +\|\partial^2_a e_{i,\e}\|_{L^\infty
(K)} \leq C \ve^{{i-1}},
\end{eqnarray*}
and moreover for any  $i=0, \ldots , I-1$ we have that
\begin{equation*}
\label{ewi} \| D^2_\xi w_{i+1,\e} \|_{\ve , 4 , \sigma } + \|
D_\xi w_{i+1,\e} \|_{\ve , 3 , \sigma } +\|w_{i+1,\e} \|_{\ve, 2
, \sigma}\le C \ve^{i+1}
\end{equation*}
and for any integer $\ell$
\begin{equation*}\label{eq:estwi}
    \|\nabla^{(\ell)}_{z} w_{i+1,\e}(z,\cdot)\|_{\ve,2,\sigma} \leq \beta C_l \ve^{i+1},
     \qquad \quad z \in K_\rho.
\end{equation*}

The new components $(\mu_{I, \ve} , d^{I-2}_{1,\ve},\ldots, d^{I-2}_{N-1,\ve},d^I_{N, \ve} , e_{I, \ve} )$
will be found reasoning as before. Computing $S
(v_{I+1 , \ve })$ (see \equ{Sep}) we get
\begin{eqnarray}\label{rivoli43i}
\mathcal{S}_\ve( v_{I+1,\e}  ) = \mathcal{L}_{\e} w_{I+1, \e}-  h_{I+1, \e}+   \ve^{I+2}\mathcal{E}_{I+1,\e}    +Q_\ve (w_{I+1,\e} )
\end{eqnarray}
where $\mathcal{L}_{\e}$ is defined in \equ{rivoli3},  and  the function $h_{3,\e}$ is given by
\begin{eqnarray}\label{w2epsilon3i}
h_{I+1,\ve} &= &  -2\ve d_{N,\ve}^I H_{ij}\partial_{ij}^2U  +\rho \, \mu_{I,\e}\ \left\{-2\xi_NH_{ij}\partial_{ij}^2U+H_{\alpha\alpha}\partial_NU\right\} -\lambda_1\,\e\,e_{I,\e}\, Z_0\nonumber\\
&& -\ve\frac{N-2}{2}\frac{\mu_{I,\e}}{\mu_{0,\e}}U^p+ 2pU^{p-1}\partial_N U\big( \bar{\xi},\xi_N+2\frac{ \ve d^0_{N,\ve}}{\rho \mu_{0,\ve}} \big)\,\frac{ \ve  d^0_{N,\ve}   }{\rho \mu_{0,\ve} }\left[\frac{d^I_{N, \e}}{d^0_{N, \e}}-\frac{\mu_{I,\e}}{\mu_{0,\e}}\right]\\
&&+\ve^I\,\rho\,\Xi_{I+1}(d^{I-2}_{j,\ve})+ \tilde{h}_{I+1,\ve}\nonumber
\end{eqnarray}
where $\tilde{h}_{I+1,\ve}$ is a smooth function on its variable which depends only on the parameters $\mu_{j,\ve}$, $d_{N,\ve}^j$, $d^{j}_{\ell,\ve}$, $e_{j,\ve}$ which have been constructed in the previous steps.
with
\begin{equation}\label{tildef2epro13i}
\int_{\hat\DD} \tilde{h}_{I+1,\ve} Z_jd\xi=O(\ve^I\rho),\quad j=1,\ldots,N-1,
\end{equation}
and
\begin{equation}\label{tildef2epro23i}
\int_{\hat\DD} \tilde{h}_{I+1,\ve} Z_{N+1}d\xi=O(\ve^{I+1}),
\qquad \int_{\hat\DD} \tilde{h}_{I+1,\ve} Z_N d\xi=O(\ve^I \rho),
\qquad \int_{\hat\DD} \tilde{h}_{I+1,\ve} Z_0 d\xi=O(\ve^I \rho).
\end{equation}

In \equ{w2epsilon3}, $\Xi_{I+1}(d^{I-2}_{j,\ve})$ is given by
\begin{align*}
 \Xi_{I+1}(d^{I-2}_{j,\ve})
 =&  -\mu_{0,\ve}\partial_jU\triangle_K d^{I-2}_{j,\ve}+\gamma(1+\gamma) \nabla_K\mu_{0,\ve} \nabla_Kd^{I-2}_{j,\ve} \partial_jU+2\nabla_K\mu_{0,\ve}\nabla_Kd^{I-2}_{j,\ve}\partial_{jl}^2U\xi_l \nonumber\\[3mm]
 &   -2\mu_{0,\ve}\tilde{g}^{ab}\frac{1}{\rho}\partial_{\bar{a}j}^2U\partial_bd^j_{I-1,\ve}-\frac{2}{3}\mu_\ve R_{islj}\xi_sd^{I-2}_{l,\ve}\partial_{ij}^2U
+\mu_{0,\ve}\mathfrak{D}_{Nl}^{ij}\xi_Nd^{I-2}_{l,\ve}\partial_{ij}^2U \nonumber\\[3mm]
 &  +\mu_{0,\ve}[\frac{2}{3}R_{mllj}+\tilde{g}^{ab}R_{jabm}-\Gamma^c_{am}\Gamma^{a}_{cj}]d^{I-2}_{m,\ve}\partial_jv
-2\mu_{0,\ve}\xi_N(H_{aj}+\tilde{g}^{ac}H_{cj})\partial_ad^{I-2}_{l,\ve}\partial_{jl}^2U.
\end{align*}
In \equ{rivoli43i} the term $ \mathcal{E}_{I+1,\e}$ can be described as
the sum of functions of the form (\ref{youjieas}).
Finally the term $Q_\ve (w_{I+1,\e} )$ in \equ{w2epsilon3i} is a sum of quadratic terms
in $w_{I+1,\e}$ like
\begin{eqnarray*}
(\mu_{0,\ve}+\mu_{1,\e}+\cdots +\mu_{I-1,\e} + \mu_{I, \ve})^{  \frac{N-2}{2}\e}\left[v_{I+1, \ve}^{p- \e} - v_{I, \ve}^{p- \e}  -(p- \e) v_{I, \ve}^{p-1- \e}
w_{I+1,\e}\right]
\end{eqnarray*}
and linear terms in $w_{I+1,\e}$ multiplied by a term of order
$\ve^2$, like
$$
(p-\ve) \left( (U-\bar{U} + w_{1,\ve})^{p-1-\ve} - (U-\bar{U})^{p-1-\ve } \right)
w_{I+1,\ve}.
$$
Arguing as in the previous step, it is possible to prove the existence of parameters $\mu_{I,\e}$ and the normal section $d^{I-2}_{1,\e},\ldots,d^{I-2}_{N-1,\e},d^I_{N,\e}$ and $e_{I,\ve}$
in such a way that  $h_{I+1,\ve}$ is $L^2$-orthogonal to $Z_j$, $j=0,
1,\cdots,N+1$. Furthermore,
\begin{equation*}
\label{emu1e3I} \| \mu_{I,\e}\|_{L^\infty (K)} +\| \partial_a
\mu_{I,\e}\|_{L^\infty (K)} +\|\partial^2_a \mu_{I,\e}\|_{L^\infty
(K)} \leq C \ve^I,
\end{equation*}
\begin{equation*}
\label{emu1e3I} \| d^I_{N,\e}\|_{L^\infty (K)} +\| \partial_a
d^I_{N,\e}\|_{L^\infty (K)} +\|\partial^2_a d^I_{N,\e}\|_{L^\infty
(K)} \leq C \ve^I,
\end{equation*}
\begin{equation*}
\label{emu1e3I} \| e_{I,\e}\|_{L^\infty (K)} +\| \partial_a
e_{I,\e}\|_{L^\infty (K)} +\|\partial^2_a e_{I,\e}\|_{L^\infty
(K)} \leq C \ve^I.
\end{equation*}
and
\begin{equation}
\label{ePhiIe} \| d^{I-2}_{l,\ve}\|_{L^\infty (K)} +\| \partial_a
d^{I-2}_{l,\ve}\|_{L^\infty (K)} +\|\partial^2_a d^{I-2}_{l,\ve}\|_{L^\infty
(K)} \leq C \ve^{I-2}.
\end{equation}
We are now in a position to apply Proposition \ref{linear} to get $w_{I+1, \e}$ solution to
\begin{equation}\label{eq:eqwIi}
    \mathcal{L}_{\ve}w_{I+1,\e} =  h_{I+1,\e}   \quad\hbox{ in } \DD\qquad
     w_{I+1,\e}  = 0, \ \,   \hbox{ on } \partial\DD.
\end{equation}
where $\mathcal{L}_{\ve}$ is defined in \equ{rivoli3}.
Furthermore, we have that
\begin{equation}
\label{ewI} \| D^2_\xi w_{I+1,\e} \|_{\ve , 4 , \sigma } + \|
D_\xi w_{I+1,\e} \|_{\ve , 3 , \sigma } +\|w_{I+1,\e} \|_{\ve, 2
, \sigma}\le C \ve^{I+1}
\end{equation}
and that there exists a positive constant $\beta$ (depending only on
$\Omega, K$ and $N$) such that for any integer $\ell$ there holds
\begin{equation}\label{eq:estw2}
    \|\nabla^{(\ell)}_{y} w_{I+1,\e}(z,\cdot)\|_{\ve,2,\sigma} \leq \beta C_l
    \ve^{I+1} \qquad \quad   y = \rho z \in K.
\end{equation}
With this choice of $(\mu_{I, \ve} , d^{I-2}_{1,\ve},\ldots, d^{I-2}_{N-1,\ve},d^I_{N, \ve} , e_{I, \ve} )$ and $w_{I+1 , \e}$ we obtain that
$$
\| - {\mathcal A}_{\mu_\ve , d_\ve } v_{I+1,\e}   - \mu_\ve^{\frac{N-2}{2}\ve}\,v_{I+1,\e}^{p- \ve} \|_{\e , 4 , \sigma} \leq C \e^{I+2}.
$$

This concludes our construction and have the validity of Proposition \ref{Construction}.

\section{The existence result: Proof of the Theorem \ref{teo1}}\label{exct}
\setcounter{equation}{0}

Let us recall that if $u$ is a solution to problem (\ref{eq:pe}), and defining $\tilde{u}$ by
$$
u(x) =(1+\alpha_\e)\rho^{-{N-2 \over 2}}
\tilde{u}( \rho^{-1}x ),
$$
then $\tilde{u}$ satisfies the following equation
\begin{equation} \label{changea}
-\Delta \tilde{u} =  \tilde{u}^{{N+2\over N-2} -\e}, \quad \tilde{u}>0\quad
\mbox{ in }
\Omega_\rho ; \qquad
 \tilde{u}=0 \quad \mbox{ on }\ \ \partial\Omega_\rho,
\end{equation}
where $\Omega_\rho=\frac{\Omega}{\rho}$.
\subsection{Global approximate solution}
Let $I$ be an integer. We have constructed an approximate solution $v_{I+1,\ve}$ in Section \ref{s:aprsol}, such that
$$
\mathcal{S}_\ve(v_{I+1,\e})=-{\mathcal A}_{\mu_\ve , d_\e } v_{I+1,\e} -\mu_{\ve}^{\frac{N-2}{2}\ve}v_{I+1,\e}^{\frac{N+2}{N-2}-\ve}=\mathcal{O}(\ve^{I+2}) \quad {\mbox {in}} \quad K_\rho\times \hat\DD,
$$
where $\mu_\e (y)  $ and $d_\e (y)$ are functions defined on $K$, whose existence and
properties are established in Proposition \ref{Construction}.
We
define locally around $K_\rho $  the function
\begin{eqnarray} \label{Vdef}
\tilde{U}_\e (z, x) &:=&  \, \mu_{\e}^{-{N-2 \over 2}} (\rho z) \, v_{I+1 , \e}
\left( z, \,  \frac{\bar{x}- \e^2\rho^{-1}\bar{d}_\e (\rho z)}{ \mu_\e (\rho z)}\, , \frac{x_N- \e \rho^{-1}d_{N,\e} (\rho z)}{ \mu_\e (\rho z)} \right)   \nonumber\\
&&\times\chi_\e (|(\bar{x}- \e^2\rho^{-1}\bar{d}_\e , x_N- \e \rho^{-1}d_{N,\e})|)
\end{eqnarray}
where $z \in K_\rho$. Here $\chi_\e$
is a smooth cut-off function with
\begin{equation}\label{magaly}
 \chi_\e (r) =1,\ \hbox{for} \ r \in [0,2 \e^{-\gamma} ],\ \
   \chi_\e (r) =  0,\ \hbox{for}\ r \in [3 \e^{-\gamma}, 4\e^{-\gamma} ],\ \
 \mbox{and}\ \ |\chi_\e^{(l)} (r) | \leq C_l \e^{l \gamma}, \ \ \forall l\geq 1,
\end{equation}
for some $\gamma \in ({1\over 2} , 1)$ to be fixed later.

We will use the notation
\begin{equation} \label{defTTa}
\tilde{u}=\tilde{{\mathcal T}}_{\mu_\e , d_\e} (\tilde{v})
\end{equation}
if and only if $\tilde{u}$ and $\tilde{v}$ satisfy
\begin{equation*}
 \tilde{u}=\mu_{\e}^{-{N-2 \over 2}} (\rho z) \, \tilde{v}
\left( z, \,  \frac{\bar{x}- \e^2\rho^{-1}\bar{d}_\e (\rho z)}{ \mu_\e (\rho z)}\, , \frac{x_N- \e\rho^{-1} d_{N,\e} (\rho z)}{ \mu_\e (\rho z)} \right) .
\end{equation*}
The function $\tilde{U}_\e$ is globally defined in
$\Omega_\rho$. We will look for  a solution to (\ref{changea})
of the form
$$
\tilde{u}_\e = \tilde{U}_\e +\phi,
$$
where $\phi$ is a lower term. Thus $\phi$ satisfies the following problem
\begin{equation}\label{nonlinearproblem}
  L_\e (\phi):=  -\Delta  \phi  -  (p-\e) \tilde{U}_\e^{p -1-\e} \phi =S_\e (\tilde{U}_\e) + N_\e (\phi)  \quad  \text{ in } \Omega_\rho,\quad \phi=0\quad \mbox{on}\ \ \partial\Omega_\rho,
\end{equation}
where
\begin{equation}
\label{eomegaeps}
S_\e (\tilde{U}_\e )= \Delta_{g^\rho} \tilde{U}_\e   +
\tilde{U}_\e^p,
\end{equation}
and
\begin{equation}
\label{Nomegaeps}
N_\e (\phi) =  (\tilde{U}_\e + \phi )^{p-\e} - \tilde{U}_\e^p -  (p-\e)
\tilde{U}_\e^{p -1-\e} \phi,
\end{equation}
where $g^\rho(y,x)=g(\rho y,\rho x)$.

To solve the Non-Linear Problem
(\ref{nonlinearproblem}) we use a fixed point argument based on the
contraction Mapping Principle. First we establish some
invertibility properties of the linear problem
$$
L_\e (\phi) = f \quad {\mbox {in}} \quad \Omega_\rho ,\quad \phi=0\quad {\mbox {on}} \quad \partial\Omega_\rho
$$
with $f\in L^2 (\Omega_\rho )$. This is the purpose of the next result.

\begin{proposition}\label{glinear}
There exist a sequence $\e_l \to 0$ and a positive
constant $C >0$, such that, for any $f \in L^2 (\Omega_{\rho_l} )$,
there exists a solution $\phi \in H^1_0 (\Omega_{\rho_l} )$ to the
equation
$$
L_{\e_l }\phi = f \quad \hbox{in } \Omega_{\rho_l},\quad \phi=0\ \ \mbox{on}\ \partial\Omega_{\rho_l},
$$
with $\rho_l=\e_l^{\frac{N-1}{N-2}}$.
Furthermore,
\begin{equation}
\label{gigio}
\| \phi \|_{H^1_0  (\Omega_{\rho_l})} \leq C\, \rho_l^{-\max \{
2, k\}} \| f \|_{L^2 (\Omega_{\rho_l})}.
\end{equation}
\end{proposition}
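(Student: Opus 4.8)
The plan is to reduce the proposition to a single \emph{a priori} estimate valid along a well chosen sequence of parameters. On the bounded domain $\Omega_\rho$ the operator $L_\e$ is self-adjoint and differs from $-\Delta$ with zero Dirichlet datum by the multiplication operator $\phi\mapsto(p-\e)\tilde U_\e^{\,p-1-\e}\phi$, which is bounded (since $\tilde U_\e\in L^\infty(\Omega_\rho)$) and hence compact from $H^1_0$ to $H^{-1}$; thus $L_\e$ is Fredholm of index zero. It therefore suffices to produce a sequence $\e_l\to 0$, with $\rho_l=\e_l^{(N-1)/(N-2)}$, and a constant $C$ such that every solution of $L_{\e_l}\phi=f$ obeys $\|\phi\|_{H^1_0(\Omega_{\rho_l})}\le C\rho_l^{-\max\{2,k\}}\|f\|_{L^2(\Omega_{\rho_l})}$: taking $f=0$ this gives injectivity of $L_{\e_l}$, and the Fredholm alternative then yields solvability with the same bound. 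So the whole proof reduces to this estimate.

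To prove the estimate I would localize near $K_\rho$. Fix a cut-off $\eta_\e$ equal to $1$ on a tubular neighbourhood of $K_\rho$ of width $\sim\e^{-\gamma}$, for a suitable $\gamma\in(\tfrac12,1)$, and supported in a slightly larger one; split $\phi=\eta_\e\phi+(1-\eta_\e)\phi$ and dispose of the exterior part by the Dirichlet--Poincar\'e inequality on $\Omega_\rho$ (there $\tilde U_\e$ has decayed, so $L_\e$ is a small perturbation of $-\Delta$), the commutator terms supported in the overlap being of lower order and absorbed. On the support of $\eta_\e$ I would pass to the Fermi coordinates of Section \ref{sec4} and apply the scaling $\eta_\e\phi=\tilde{{\mathcal T}}_{\mu_\e,d_\e}(v)$; by Lemma \ref{scaledlaplacian}, with $U=w_N$, the equation $L_\e\phi=f$ becomes, modulo the metric remainders ${\mathcal A}_1,\ldots,{\mathcal A}_5,B$ whose coefficients satisfy the smallness requirement \eqref{ipo1},
\[
-\mu_\e^2\,\Delta_{K_\rho}v-\Delta_\xi v-p\,U^{p-1}v=\tilde f\qquad\text{in }\DD,
\]
with $\|\tilde f\|_{\e,4,\sigma}$ bounded by a controlled power of $\rho$ times $\|f\|_{L^2}$. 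I would then decompose fibrewise $v(z,\cdot)=\sum_{j=0}^{N+1}v_j(z)Z_j+v^\perp(z,\cdot)$ with $\int_{\hat\DD}v^\perp Z_j\,d\xi=0$, the modes being coupled only through the $O(\delta)$-small lower-order operators, which is dealt with by a contraction argument. For $v^\perp$ the fibre operator $-\Delta_\xi-p\,U^{p-1}$ is non-negative, so together with $-\mu_\e^2\Delta_{K_\rho}\ge 0$ Proposition \ref{linear} applies and gives $\|v^\perp\|_{\e,2,\sigma}\le C\|\tilde f\|_{\e,4,\sigma}$; converting back to $L^2$--$H^1_0$ norms on $\Omega_\rho$ (the volume of $K_\rho$ being $\asymp\rho^{-k}$) this reads $\|\eta_\e\phi^\perp\|_{H^1_0}\le C\rho^{-2}\|f\|_{L^2}$. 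The finitely many modes $v_j$, $j=1,\ldots,N+1$, whose reduced operator is $-\mu_\e^2\Delta_{K_\rho}$ plus lower-order terms and whose only near-kernel consists of the functions constant on $K_\rho$, are treated exactly as in \cite{dmpa,dmm}, the needed non-degeneracy being encoded in the minimality and non-degeneracy of $K$ together with the negativity of $\sum_a H_{aa}$; again the loss is no worse than $\rho^{-2}$.

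The delicate mode is $v_0$. Since $(-\Delta_\xi-p\,U^{p-1})Z_0=-\lambda_1 Z_0$, its reduced equation reads $-\mu_\e^2\Delta_{K_\rho}v_0-\lambda_1 v_0+{\mathcal R}_\e v_0=f_0$ on $K_\rho$, with ${\mathcal R}_\e$ of size $O(\delta)+O(\e^{1/(N-2)})$. Rescaling $z=y/\rho$ presents $-\mu_\e^2\Delta_{K_\rho}$ as a lower-order perturbation of the operator $-\tilde\mu_\e^2\Delta_K$ on the fixed compact manifold $K$ (with $\tilde\mu_\e=\rho\mu_\e\asymp\rho\mu_0$), whose eigenvalues $\{\Lambda_j(\e)\}$ obey Weyl's law $\#\{j:\Lambda_j(\e)\le\Lambda\}\asymp\rho^{-k}\Lambda^{k/2}$; hence near the fixed level $\lambda_1$ consecutive eigenvalues are spaced by $\asymp\rho^{\,k}$, while each $\Lambda_j(\e)$ varies with $\e$ at rate $\asymp\Lambda_j/\e$. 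An interval-avoidance (pigeonhole) argument in $\e$, in the spirit of \cite{dmpa}, then produces a sequence $\e_l\to 0$ for which $\dist(\lambda_1,\ {\rm spec}(-\tilde\mu_{\e_l}^2\Delta_K))\ge c\,\rho_l^{\,k}$; along it $-\mu_{\e_l}^2\Delta_{K_{\rho_l}}-\lambda_1$ is invertible with inverse of norm $\le C\rho_l^{-k}$, and absorbing ${\mathcal R}_{\e_l}$ by choosing $\delta$ and $\e_l$ small yields $\|\eta_{\e_l}\phi_0\|_{H^1_0}\le C\rho_l^{-k}\|f\|_{L^2}$. Collecting the three interior contributions, re-gluing with the exterior estimate, and noting that $\rho_l^{-\max\{2,k\}}$ dominates both $\rho_l^{-2}$ and $\rho_l^{-k}$, gives the desired \emph{a priori} bound, which with the reduction of the first paragraph proves the proposition. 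I expect the main obstacle to be precisely this spectral analysis of the $Z_0$-mode: because $\mu_\e$ depends on both $\e$ and the point of $K$, one must control the eigenvalues of a genuinely variable-coefficient operator on $K$ near the fixed threshold $\lambda_1$, select the sequence $\e_l$ so that the spectral gap degenerates no faster than $\rho_l^{\,k}$, and check that the remainders ${\mathcal R}_{\e_l}$ do not close it.
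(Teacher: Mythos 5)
Your proposal follows essentially the same route as the paper: localize near $K_\rho$ in Fermi coordinates, decompose into the modes $Z_0,\dots,Z_{N+1}$ plus an orthogonal remainder handled by Proposition \ref{linear}, reduce to a system of operators on $K$, and select the sequence $\e_l$ by a resonance-avoidance (gap/pigeonhole) argument for the $Z_0$-mode — which is exactly Proposition \ref{teouffa} and Lemma \ref{prop-gap}, with the globalization to $\Omega_\rho$ delegated (as the paper does) to \cite{demamu}. The only framing difference is that the paper works variationally by expanding the energy $E(\phi)$ on the decomposition \eqref{decomp} to arrive at the reduced system \eqref{mudidne}, whereas you argue by a priori estimates plus the Fredholm alternative; also note that for the modes $j=1,\dots,N+1$ the reduced operators are not near-translation-invariant kernels but the coercive $(\delta,d_N)$-system (with $A,C<0$, $AC-B^2>0$) and the non-degenerate Jacobi operator, as in \eqref{mudidne}.
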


The proof of this proposition will be given in Section \ref{linearas}. We are now in position to prove our main Theorem \ref{teo1}.

\smallskip

\subsection{ Proof of the main Theorem \ref{teo1}}
By Proposition \ref{glinear}, $\phi \in H^1_0 (\Omega_{\rho} )$ is a
solution to (\ref{nonlinearproblem}) if and only if
$$
\phi = L_\e^{-1} \left( S_\e (\tilde{U}_\e ) + N_\e (\phi )
\right).
$$
Notice that
\begin{equation}
\label{nonno1} \| N_\e (\phi ) \|_{L^2 (\Omega_{\rho} )} \leq C
\begin{cases}
    \| \phi \|_{H^1_0 (\Omega_{\rho})}^p & \text{ for } p\leq 2, \\
    \| \phi \|_{H^1_0 (\Omega_{\rho} )}^2 & \text{ for } p>2
  \end{cases} \quad\qquad  \| \phi \|_{H^1_0 (\Omega_{\rho} )} \leq 1
\end{equation}
and
\begin{eqnarray}
\label{nonno2}
 \| N_\e (\phi_1 ) - N_\e (\phi_2 ) \|_{L^2
(\Omega_{\rho}
)}
  \leq  C \,\begin{cases}
   \left(  \| \phi_1  \|_{H^1_0 (\Omega_{\rho})}^{p-1} + \| \phi_2  \|_{H^1_{g^\e} (\Omega_{\rho})}^{p-1} \right)
   \| \phi_1 -\phi_2 \|_{H^1_0  (\Omega_{\rho} )} & \text{ for } p\leq 2, \\[3mm]
    \left(  \| \phi_1  \|_{H^1_0 (\Omega_{\rho} )} + \| \phi_2  \|_{H^1_0  (\Omega_{\rho})} \right) \| \phi_1 -\phi_2 \|_{H^1_0  (\Omega_{\rho} )}  & \text{ for } p>2
  \end{cases},
\end{eqnarray}
for any $\phi_1$, $\phi_2$ in $H^1_0  (\Omega_{\rho} )$ with $\| \phi_1
\|_{H^1_0  (\Omega_{\rho} )}$, $ \| \phi_2 \|_{H^1_0 (\Omega_{\rho})} \leq 1.$

Defining $T_\e :H^1_0  (\Omega_{\rho} ) \to H^1_0 (\Omega_{\rho} )$ as
$$
T_\e (\phi ) = L_\e^{-1} \left( S_\e (\tilde{U}_\e ) + N_\e (\phi
) \right)
$$
we will show that $T_\e$ is a contraction in some small ball in $H^1_0 (\Omega_{\rho} )
$. A direct consequence of (\ref{bf4}), we have
$
\|S_\e (\tilde{U}_\e )\|_{L^2(\Omega_{\rho})}\leq C\e^{I+1}.
$
Using this inequality and by
(\ref{nonno1}),
(\ref{nonno2}) and (\ref{gigio}), we obtain
$$
\| T_\e (\phi ) \|_{H^1  (\Omega_{\rho} )} \leq C \rho^{- \max \{ 2 , k \}}
\begin{cases}
   \left( \e^{I+1 }+  \| \phi  \|_{H^1_0 (\Omega_{\rho} )}^p \right) & \text{ for } p\leq 2, \\
    \left( \e^{I+1} + \| \phi  \|_{H^1_0  (\Omega_{\rho} )}^2 \right)   & \text{ for } p>2.
  \end{cases}
$$
Now we choose integers $d$ and $I$ so that
$$
d> \begin{cases}
   \frac{N-1}{N-2}{\max \{ 2 ,k \} \over p-1}& \text{ for } p\leq 2, \\[3mm]
    \frac{N-1}{N-2}\max \{ 2 ,k \}  & \text{ for } p>2
  \end{cases} \quad I > d-1 + \frac{N-1}{N-2}\max \{ 2 , k \}.
  $$
  Thus one easily gets that $T_\e$ has a unique fixed point in set
  $${\mathcal B} = \{ \phi \in H^1_0 (\Omega_{\rho} )\, : \, \| \phi \|_{H^1_0  (\Omega_{\rho} )} \leq \e^d \},
  $$
  as a direct application of the contraction mapping Theorem. This concludes the proof.
\hfill

\section{The linear theory: proof of Proposition \ref{glinear}}\label{linearas}

\setcounter{equation}{0}

\smallskip

In this section, we will establish a solvability theory for the linear problem to prove Proposition \ref{glinear}.
We first study  the above problem in a strip close to the scaled
manifold $K_\rho$.
Let $\gamma \in ({1\over 2} , 1)$ be the number fixed before in
(\ref{magaly}) and define
\begin{equation}
\label{omegaepsilongamma}
\Omega_{\rho, \gamma} := \{ x \in \Omega_\rho
\, : \, {\mbox {dist}}  (x,K_\rho ) <2 \e^{-\gamma} \}.
\end{equation}
We are first interested in solving the following problem: given $f
\in L^2 (\Omega_{\rho, \gamma})$
\begin{equation} \label{lineare}
    -\Delta  \phi  -  (p-\e)  \tilde{U}_\e^{p -1-\e} \phi =f\quad \text{ in } \Omega_{\rho, \gamma},\quad \phi=0\quad \mbox{on}\ \partial\Omega_{\rho, \gamma}.
\end{equation}

We have the validity of the following result.
\begin{proposition}\label{teouffa}
There exist a constant $C>0$ and a sequence $\e_l = \e \to 0$ such
that, for any $f \in L^2 (\Omega_{\rho, \gamma} )$ there exists a
solution $\phi \in H^1_0 (\Omega_{\rho, \gamma}) $ to Problem (\ref{lineare}) such that
\begin{equation}
\label{uffa1}
\| \phi \|_{ H^1_0 (\Omega_{\rho, \gamma})} \leq C \rho^{- \max \{2, k\}} \| f
\|_{L^2 (\Omega_{\rho, \gamma})}.
\end{equation}
\end{proposition}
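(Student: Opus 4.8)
The plan is to solve \eqref{lineare} by a Fredholm/Galerkin scheme combined with an a priori estimate obtained by a blow-up (compactness-contradiction) argument, exactly in the spirit of the proof of Proposition \ref{teouffa}-type statements in \cite{dmm}. Since the domain $\Omega_{\rho,\gamma}$ is a thin neighborhood of $K_\rho$, one first changes variables via $\tilde{\mathcal T}_{\mu_\e,d_\e}$ to transfer the problem to the model set $\DD$; there the operator $-\Delta-(p-\e)\tilde U_\e^{p-1-\e}$ becomes, to leading order, $\mu_\e^2\Delta_{K_\rho}+\Delta_\xi+pU^{p-1}$ plus the small perturbation $\sum_\ell \mathcal A_\ell+B$ from Lemma \ref{scaledlaplacian}. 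The real unknown is therefore a function $\phi(z,\xi)$ on $K_\rho\times\hat\DD$, and the source of non-invertibility is the approximate kernel generated by the bound states $Z_0,\ldots,Z_{N+1}$ of the limiting operator on $\R^N$: for each $z$, $\phi(z,\cdot)$ has a component in ${\rm span}\{Z_j\}$ that cannot be controlled by the equation alone. The strategy is thus to first solve the problem in the subspace orthogonal (for each $z$) to the $Z_j$, where Proposition \ref{linear} gives invertibility, and then to handle the finite-dimensional part by hand.

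The key steps, in order, are as follows. First, I would decompose $\phi$ as $\phi = \phi^\perp + \sum_{j=0}^{N+1} c_j(z)\, Z_j(\xi)\,\chi_\e(\xi)$ with $\phi^\perp$ satisfying the orthogonality conditions $\int_{\hat\DD}\phi^\perp Z_j=0$, and correspondingly solve an auxiliary \emph{projected} linear problem: find $\phi^\perp$ and scalar functions $c_j(z)$ on $K_\rho$ with
\begin{equation*}
L_\e(\phi^\perp) = f - \sum_{j=0}^{N+1} c_j(z)\, Z_j \chi_\e \quad\text{in }\Omega_{\rho,\gamma},\qquad \phi^\perp=0 \text{ on }\partial\Omega_{\rho,\gamma},\qquad \int_{\hat\DD}\phi^\perp(\rho z,\xi)Z_j(\xi)\,d\xi=0.
\end{equation*}
Existence and the estimate $\|\phi^\perp\|_{H^1_0}\le C\|f\|_{L^2}$ for this projected problem follow from the linear theory behind Proposition \ref{linear} (or rather a suitable $L^2$--$H^1$ version of it): the Fredholm alternative applies once the orthogonality conditions kill the obstruction, and the a priori bound is proved by contradiction — rescaling a would-be unbounded sequence of solutions and passing to a limit that solves $-\Delta\tilde\phi-pU^{p-1}\tilde\phi=0$ on $\R^N$ with the $Z_j$-orthogonality preserved, hence $\tilde\phi\equiv0$. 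Second, having solved the projected problem, I would test it against $Z_i\chi_\e$ and integrate over $\hat\DD$ to obtain a linear system for the coefficients $c_j(z)$: this produces, after expanding the metric coefficients and using the explicit integrals $\int_{\R^N}Z_iZ_j$, a second-order elliptic system on $K$ of the schematic form $\mathcal L_K c = (\text{data from }f) + (\text{small coupling terms})$, where the principal part involving $c_0$ is governed by the operator $\mu_0^2\Delta_K - \lambda_1$ (from the eigenvalue $\lambda_1$ of \eqref{denotareZ0}) and the part involving $c_{N+1}$ (and $c_N$) by the Jacobi-type operator of $K$ together with the nondegeneracy/curvature data. Third, I would show this system is solvable with the claimed loss $\rho^{-\max\{2,k\}}$ \emph{for a suitable sequence} $\e=\e_l\to0$: the $c_0$-equation $\mu_0^2\Delta_K c_0-\lambda_1 c_0/\rho^2 = \cdots$ is always uniquely solvable and well-controlled because $\lambda_1>0$, but the $c_{N+1}$-component involves an operator whose spectrum, after the $\rho^{-2}$ rescaling, picks up the resonant values — precisely the reason the statement only asserts existence along a sequence — so one invokes an argument à la Mazzeo–Pacard / del Pino–Musso–Pacard: the relevant one-parameter family of self-adjoint operators $\mathcal A(\rho)$ has eigenvalues crossing zero, but one can choose $\rho_l\to0$ staying a definite distance $\sim \rho_l^{\max\{2,k\}-2}$ (or a fixed gap, depending on the normalization) away from the spectrum, which yields the bound \eqref{uffa1}.

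\textbf{Main obstacle.} The genuinely hard step is the third one: the \emph{resonance}. Unlike the construction of approximate solutions (Section \ref{s:aprsol}), where one only ever had to solve algebraic or elliptic equations on $K$ whose invertibility was guaranteed once and for all (det$(F_0)\neq0$, Jacobi operator invertible), here the linearization of the \emph{full} problem around the highly accurate approximation contains, through the dilation parameter and the negative direction $Z_0$ being a genuine (not approximate) kernel element of the model, a family of modes indexed by the Fourier decomposition along $K$; as $\rho\to0$ more and more of these become resonant, the Morse index diverges, and there is no way to get a uniform bound for \emph{all} small $\e$. One must therefore carefully identify the resonant set, show it has small measure / is discrete enough, and select $\e_l$ avoiding it while keeping quantitative control of the inverse — this is where the precise power $\rho^{-\max\{2,k\}}$ comes from, the $k$ reflecting the number of variables along $K$ entering the worst eigenvalue estimate. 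Everything else (the change of variables, the Fredholm argument for $\phi^\perp$, the blow-up a priori estimate, deriving the reduced system) is routine and parallels \cite{dmm,demamu}, so I would state those steps compactly and concentrate the detailed work on the spectral-gap selection of the sequence $\e_l$.
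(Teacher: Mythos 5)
Your overall strategy — decompose $\phi$ into a part orthogonal (fiberwise) to the bound states $Z_0,\dots,Z_{N+1}$ plus a finite-dimensional remainder, solve the projected problem by Fredholm plus a blow-up a priori estimate, and reduce the obstruction to a linear system on $K$ whose invertibility must be arranged along a sequence $\e_l\to 0$ — does match the paper's approach, which expands the energy functional $E(\phi)$ after the decomposition \eqref{decomp} and derives the reduced system \eqref{mudidne}. But there is a genuine error in the heart of the argument, namely in the identification of which reduced equation is resonant, and it comes from a sign.

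You assert that the $c_0$-equation is governed by $\mu_0^2\Delta_K c_0 - \lambda_1 c_0/\rho^2$ and is therefore uniformly invertible because $\lambda_1>0$, while the $c_{N+1}$ (and $c_N$) component carries the resonance. This is backward. Since $Z_0$ is the \emph{unstable} eigendirection of the model operator — eq.\ \eqref{denotareZ0} reads $\Delta Z_0 + p U^{p-1}Z_0 = \lambda_1 Z_0$ with $\lambda_1>0$, equivalently $-\Delta Z_0 - p U^{p-1}Z_0 = -\lambda_1 Z_0$, so the quadratic form $\int(|\nabla Z_0|^2 - pU^{p-1}Z_0^2) = -\lambda_1\int Z_0^2$ is \emph{negative} — the reduced operator on $c_0$ obtained from $-\mu_\e^2\Delta_{K_\rho}-\Delta_\xi-pU^{p-1}$ acting on $c_0(z)Z_0(\xi)$ is (in the $y$-variable) $-\mu_\e^2\rho^2\Delta_K - \lambda_1$, i.e.\ $\mu_\e^2\Delta_K + \lambda_1/\rho^2$ after dividing by $-\rho^2$, not your $\mu_0^2\Delta_K - \lambda_1/\rho^2$. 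Its eigenvalues $-\mu_\e^2\mu_i + \lambda_1/\rho^2$ (with $\mu_i\geq 0$ the spectrum of $-\Delta_K$) sweep through zero as $\rho\to 0$; this is exactly the diverging-Morse-index phenomenon announced in the introduction, and it is \emph{this} operator, $L_0(e)=\Delta_K e + D_1\lambda_1 e + D_2 d_N$ in \eqref{mudidne}, that requires Lemma \ref{prop-gap} and the sequence selection. The $(\delta,d_N)$ system $L_{N+1}, L_N$, by contrast, has the algebraic part $\begin{pmatrix}A&B\\ B&C\end{pmatrix}$ with $A<0$, $C<0$, $AC-B^2>0$ coming from the boundary-correction $\bar U$, and the paper proves the \emph{uniform} (in $\e$, though with a fixed $\e$-power loss) a priori bound \eqref{eq:estdeltadN} by the maximum principle and Ascoli–Arzel\`a, with no need to avoid a resonant set; and the $L_j$ equations are uniformly invertible by nondegeneracy of $K$. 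So your plan would lead you to prove a uniform bound for the $e$-component (which is false) and to look for a spectral gap in the $(\delta,d_N)$-system (which is unnecessary). With the sign fixed, your spectral-gap argument à la Mazzeo–Pacard is the right idea, but it has to be aimed at the $Z_0$-mode.

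A smaller stylistic difference: the paper works variationally with the quadratic functional $E(\phi)$ and expands it mode by mode (\eqref{expane1}–\eqref{tere3e}) rather than testing the projected equation; the two routes are essentially equivalent and yield the same reduced system, so this is not a gap.
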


\begin{proof}
The quadratic functional of problem (\ref{lineare})
given by
\begin{equation}\label{functional1}
E(\phi ) = {1\over 2} \int_{\Omega_{\rho,\gamma}}
(|\nabla  \phi |^2  -  (p-\e) \tilde{U}_\e^{p-1-\e} \phi^2 )
\end{equation}
for functions $\phi \in H^1(\Omega_{\rho,\gamma}) $.

Let
$(y,x)\in \mathbb{R}^{k+N}$ be the local coordinates along $K_\rho$. With abuse of notation we will denote
\begin{equation}
\label{bb}
  \phi (\U(y, x))= \phi(z, x ),\quad \mbox{with}\ \ y=\rho z.
\end{equation}
Since the original variable $(z, x)\in \mathbb{R}^{k+N}$
are only local coordinates along $K_\rho$ we let the variable $(z, x)$
vary in the set $\mathcal{C}_\e$ defined by
\begin{equation}\label{ddomain}
\mathcal{C}_\e = \{ (z,x) \ / \ \rho z\in \ K,\quad    | x|  <  \e^{-\gamma} \}.
\end{equation}
We write $\mathcal{C}_\e=\frac{1}{\rho} K\times\hat {\mathcal{C}_\e}$ where
\begin{equation}\label{dddomain}
\hat {\mathcal{C}_\e} = \{ x \ / \  |x|  <  \e^{-\gamma} \}.
\end{equation}
Observe  that $\hat {\mathcal{C}_\e}$ approaches, as $\e \to 0$, the whole
space $\mathbb{R}^N$.

In these new local coordinates,  the energy density associated to
the energy $E$ in (\ref{functional1}) is given by
\begin{equation} \label{e}
\frac12 \left[|\nabla  \phi|^2  - (p-\e) \tilde{U}_\e^{p -1-\e}
\phi^2  \right] \sqrt{\det(g^\e)},
\end{equation}
where $\nabla_{g^\e}$ denotes the gradient in the new variables and
where $g^\e$ is the metric in the coordinates
$(z, x)$. Arguing as in \cite{demamu}, we have that, if $(z,x)$ vary in $\mathcal{C}_\e$,
then, the energy functional (\ref{functional1}) in the new variables
(\ref{bb}) is given by
\begin{eqnarray}\label{energydensity}
E  ( \phi) & = &\int_{K_\rho \times \hat{\mathcal{C}_\e}} \left(\frac12 (
|\nabla_x \phi|^2   - (p-\e) \tilde{U}_\e^{p -1-\e} \phi^2 )  \right)
\sqrt{\det(g^\e)} \, dz \, dx\nonumber\\
&& +\int_{K_\rho \times \hat{ \mathcal{C}_\e}}\frac12\,\Xi_{ij} (\rho z ,
x)\,\partial_{i}\phi\partial_{j}\phi\,\sqrt{\det(g^\e)} \, dz \, dx\\
 &&
 +\frac12 \int_{K_\rho\times \hat{\mathcal{C}_\e}} |\nabla_{K_\rho}\phi|^2\,\sqrt{\det(g^\e)} \, dz \, dx+
 \int_{K_\e \times \hat{\mathcal{C}_\e}} B(\phi,\phi)\,\sqrt{\det(g^\e)} \, dz \, dx, \nonumber
\end{eqnarray}
where
\begin{equation}\label{eqXiij}
\Xi_{ij} (\rho z , x) =2 \rho H_{ij} x_N -\frac{\rho^2}3\,R_{islj}x_l x_s-\rho^2 x_N^2 (H^2)_{ij},
\end{equation}
and we denoted by $B(\phi,\phi)$ a quadratic term in $\phi$ that can be
expressed in the following form
\begin{eqnarray}\label{defBB}
 B(\phi,\phi)= O \left(   \rho^3| x  |^3   \right)\partial_{i} \phi \partial_{j} \phi  +{\rho }\,|\nabla_{K_\e}\phi|^2\,
  O(\rho^2 |x|) +\partial_j \phi \partial_{\bar a}\phi \left(\mathcal{O}(\rho |x
|)\right)
\end{eqnarray}
and we used the Einstein convention over repeated indices.
Furthermore we use the notation $\partial_a =
\partial_{y_a} $ and $\partial_{\bar a} = \partial_{z_a}$.

\medskip
\noindent
Given a function $\phi \in H^1 (\Omega_{\rho,\gamma})$, we decompose it
as
\begin{equation}\label{decomp}
\phi= \left[{\delta \over \, \mu_\e} \tilde{{\mathcal T}}_{\, \mu_\e ,
\, d_\e} ( Z_{N+1}) + \sum_{j=1}^{N} {d^j \over \, \mu_\e}
\tilde{{\mathcal T}}_{\, \mu_\e , \, d_\e} ( Z_j)
 + {e \over \, \mu_\e}   \tilde{{\mathcal T}}_{\, \mu_\e , \, d_\e} (Z_0) \right] \bar \chi_\e   + \phi^\bot
\end{equation}
where the expression $\tilde{{\mathcal T}}_{\, \mu_\e , \, d_\e} (
v)$ is defined in (\ref{defTTa}), the functions $Z_{N+1}$ and $Z_j$ are
already defined in (\ref{lezetas}) and where $Z_0$ is the eigenfunction, with $\int_{\mathbb{R}^N} Z^2 =1$,
corresponding to the unique positive eigenvalue $\lambda_1$ in $L^2
(\mathbb{R}^N)$ of the problem
\begin{equation}\label{lambda0}
\Delta_{\mathbb{R}^N} \phi + p U^{p-1} \phi = \lambda_1 \phi
\quad {\mbox {in}} \quad \mathbb{R}^N.
\end{equation}
It is worth mentioning that $Z_0(\xi )$ is even and it has
exponential decay of order $O(e^{-\sqrt{\lambda_1} |\xi|} )$ at
infinity. The function $\bar \chi_\e$ is a smooth cut off function
defined by
\begin{equation}
\label{chibar} \bar \chi_\e (x) = \hat \chi_\e \left(  \left|\left({  \bar{x}-
\e^2\rho^{-1}\bar{d}_\e \over \,  \mu_\e}, {  x_N-
\e \rho^{-1}d_{N,\e} \over \,  \mu_\e}\right) \right| \right),
\end{equation}
with $\hat \chi(r) = 1$ for $r \in (0,{3\over 2} \e^{-\gamma} )$,
and $\chi(r)=0$ for $r>2\e^{-\gamma}$. Finally, in (\ref{decomp}) we
have that $\delta = \delta (\rho z)$, $d^j = d^j (\rho z)$ and $e= e(\rho
z)$ are function defined in $K$ such that $\forall z\in K_\rho$
\begin{equation}
\label{orth1}
\int_{\hat{\mathcal{C}}_\e} \phi^\bot {\tilde{{\mathcal T}}}_{\, \mu_\e , \, d_\e} (Z_{N+1}) \bar \chi_\e d x =
\int_{\hat{\mathcal{C}}_\e} \phi^\bot {\tilde{{\mathcal T}}}_{\, \mu_\e ,
\, d_\e} (Z_j) \bar \chi_\e = \int_{\hat{\mathcal{C}}_\e}
\phi^\bot {\tilde{{\mathcal T}}}_{\, \mu_\e , \, d_\e} (Z_0) \bar
\chi_\e=0.
\end{equation}
We will denote by $(H_\e^1)^\bot$ the subspace of the functions in
$H_\e^1$ that satisfy the orthogonality conditions (\ref{orth1}).

A direct computation shows that
$$
\delta (\rho z) = {\int \phi  {\tilde{{\mathcal T}}}_{\, \mu_\e ,
d_\e} (Z_{N+1}) \over \,   \mu_\e \int Z_{N+1}^2} (1+ O(\e )) + O(\e )
(\sum_j d^j (\rho z) + e (\rho z)), \quad
$$
$$
d^j (\rho z) = {\int \phi  {\tilde{{\mathcal T}}}_{\, \mu_\e , \, d_\e}
(Z_j) \over \,   \mu_\e \int Z_j^2} (1+ O(\e ))  + O(\e ) (\delta
(\rho z) + \sum_{i\not= j}  d^i (\rho z) + e (\rho z)),
$$
and
$$
e(\rho z) = {\int \phi  {\tilde{{\mathcal T}}}_{\, \mu_\e , \, d_\e} (Z_0)
\over \,   \mu_\e \int Z_0^2} (1+ O(\e))  + O(\e) (\delta (\rho z)
+\sum_j d^j (\rho z) ).
$$
Observe that, since $\phi \in H_{g^\e}^1$, one easily get that the
functions $\delta $, $d^j$ and $e$ belong to the Hilbert space
\begin{equation}\label{H1K}
{\mathcal H}^1 (K) = \{ \zeta \in {\mathcal L}^2 (K) \, : \,
\partial_a \zeta \in {\mathcal L}^2 (K),\quad a=1,\cdots,k \}.
\end{equation}

Observe that in the region we are considering the function $\tilde{U}_\ve$
is nothing but $\tilde{U}_\ve= {\tilde{{\mathcal T}}}_{\, \mu_\e , \, d_\e}
(v_{I+1 , \e})$, where $v_{I+1, \e}$ is the function whose existence
and properties are proven in Lemma \ref{Construction}. For the
argument in this part of our proof it is enough to take $I=3$, and
for simplicity of notation we will denote by $\hat w$ the function
$v_{I+1 , \e}$ with $I=3$. Referring to (\ref{bf4}) we have
\begin{equation}\label{hatw}
\hat w (z, \xi) = U (\xi)-\bar{U}(\xi) + \sum_{i=1}^4 w_{i,\e} (z,
\xi)
\end{equation}
where $U=w_N$ and $\bar{U}$ are defined in \equ{wn} and (\ref{ubar}), and
\begin{equation}\label{hatw1}
\| D^2_\xi w_{i+1,\e} \|_{\ve , N-2 , \sigma } + \|
D_\xi w_{i+1,\e} \|_{\ve , N-3 , \sigma } +\|w_{i+1,\e} \|_{\ve, N-4
, \sigma}\le C \ve^{i+1}
\end{equation}
and, for any integer $\ell$
$$
\|\nabla^{(\ell)}_{y} w_{i+1,\e}(y,\cdot)\|_{\ve,N-2,\sigma} \leq \beta C_l \ve^{i+1}
\qquad \quad y = \rho z \in K
$$
for any $i=0, 1, 2, 3$.

Thanks to the above decomposition (\ref{decomp}), we have the validity
of the following expansion for $E(\phi)$.

\begin{eqnarray}\label{expane1}
&& E ({\delta \over \, \mu_\e } {\tilde{\mathcal T}}_{\, \mu_\e ,
\, d_\e} (Z_{N+1} ) \bar \chi_\e )
 =\rho^{-k}\e \frac{1}{2}    \int_K\left[ A_{1,\e} \e^{1+\frac{2}{N-2}}
|\nabla_K ( \delta (1+ o(\e  ) \beta_1^\e (y)  )) |^2  -(N-2)A_1 \frac{\mu_0^{N-4}}{(d_{N}^0)^{N-2}} \delta^2 \right.\nonumber\\
&&\qquad  \left.+(N-2)A_1 \frac{\mu_0^{N-3}}{(d_{N}^0)^{N-1}}\delta d_N +\e^{\frac{1}{N-2}}{\delta \over \, \mu_0 }\left(\frac{\mu_0}{d_{N}^0}\right)^{N-1}g_{N+1}\left(\frac{\mu_0}{d_{N}^0}\right)\right]dz
\end{eqnarray}
\begin{eqnarray}\label{expane1bn}
&& E ({d_N \over \, \mu_\e } {\tilde{\mathcal T}}_{\, \mu_\e ,
\, d_\e} (Z_{N} ) \bar \chi_\e )
 =\rho^{-k}\rho^2\frac{1}{2}    \int_K\left[ A_{2,\e} \e
|\nabla_K ( d_N (1+ o(\e  ) \beta_2^\e (y)  )) |^2 - (N-2)A_1 \frac{\mu_0^{N-3}}{(d_{N}^0)^{N-1}}\delta d_N \right.\nonumber\\
&&\qquad  \left.+(N-1)A_3 \frac{\mu_0^{N-2}}{(d_{N}^0)^{N}}d_N^2 +\e^{\frac{1}{N-2}}{d_N \over \, \mu_0 }\left(\frac{\mu_0}{d_{N}^0}\right)^{N}g_{N}\left(\frac{\mu_0}{d_{N}^0}\right)\right]dz
\end{eqnarray}
\begin{eqnarray}\label{expane1bj}
&& E ({d_j \over \, \mu_\e } {\tilde{\mathcal T}}_{\, \mu_\e ,
\, d_\e} (Z_{j} ) \bar \chi_\e ) \\
 &=&\rho^{-k}\rho^2\frac{1}{2}    \int_K\left[ A_{3,\e}
|\nabla_K ( d_j (1+ o(\e  ) \beta_3^\e (y)  )) |^2- {{\mathcal R}_{mj} \over 4}  d_j d_m +\e^{\frac{1}{N-2}}{d_j \over \, \mu_0 }\left(\frac{\mu_0}{d_{N}^0}\right)^{N-1}g_{j}\left(\frac{\mu_0}{d_{N}^0}\right)\right]dz\nonumber
\end{eqnarray}

\begin{eqnarray}\label{tere3e}
&&E({e\over \, \mu_\e} {\tilde{\mathcal T}}_{\, \mu_\e ,
\, d_\e} (Z_0)) \\
&=& \rho^{-k}\frac{1}{2}\int_K\left[D_1\,|\partial_{ a} e  + e^{-\sqrt{\la_1}
\ve^{-\gamma}} \beta_4^\e (y) e |^2- \la_1  D_1  e^2- D_2d_{N}^0 e\right]\bigg(1+ \e
O(e^{-\sqrt{\la_1}|\xi|})\bigg). \nonumber
\end{eqnarray}

\noindent
Therefore, $\mu$ and $d_1,\cdots,d_{N-1},d_N$ and $e$ satisfy
\begin{eqnarray}\label{mudidne}
\left\{
\begin{array}{llll}
             L_{N+1}(\d,d_N) := -c_1 \e^{1+\frac{2}{N-2}}\mu_0 \Delta_K \delta+A \delta +B  d_N=\alpha_{N+1}+\e M_{N+1};\\[3mm]
             L_{N}(\d,d_N) := -c_2 \e \mu_0 \Delta_K d_N +B \delta  +C  d_N=\alpha_{N}+\e M_{N}; \\[3mm]
             L_{j}(\bar d) :=   - \Delta_K
 d_j+\bigg( \tilde g^{ab} R_{mabj}- \G_a^c(E_m) \G_c^a(E_j)  \bigg)  d_m=\alpha_{j}+\e M_{j},\  j=1,\ldots,N-1;\\[3mm]
             L_{0}(e) :=    \Delta_K e +D_1\la_1e+D_2d_N=\alpha_{0}+\e Q_0+\e^2 M_{0},
\end{array}
\right.
\end{eqnarray}
where
$$
A=-(N-2)A_1 \frac{\mu_0^{N-3}}{(d_{N}^0)^{N-2}} ,\quad
B=(N-2)A_1 \frac{\mu_0^{N-2}}{(d_{N}^0)^{N-1}},\quad
C=-(N-1)A_3 \frac{\mu_0^{N-1}}{(d_{N}^0)^{N}},
$$
with $AC-B^2>0$, and
$$
D_1 = \int_{\R^N} Z_0^2(\xi)\,d\xi, \quad \hbox{and }\qquad
D_2=2H_{jj}d_{N,\ve}^0\int_{\R^N}\partial^2_{jj}UZ_0d\xi.
$$
The functions $\alpha_j$ are explicit function of $z$ in $K$,smooth and
uniformly bounded in $\e$.   The operators $M_i=M_i(\mu,d,e)$ can be decomposed in the following form
$$
M_i(\mu,d,e)=A_i(\mu,d,e)+K_i(\mu,d,e)
$$
where $K_i$ is uniformly bounded in $L^{\infty}(K)$ for $(\mu,d,e)$ and is also compact. The operator $A_i$ depends on $(\mu,d,e)$ and their first and second derivatives and it is Lipschitz in this region, that is
$$
\|A_i(\mu_1,d_1,e_1)-A_i(\mu_2,d_2,e_2)\|_\infty\leq Co(1)\|(\mu_1-\mu_2,d_1-d_2,e_1-e_2)\|.
$$

We remark that the dependence on $\ddot \mu$, $\ddot d$ and $\ddot
e$ is linear. Finally, the operator $Q_0$ is quadratic in $d$ and it
is uniformly bounded in $L^\infty (K)$ for $(\d , d , e)$
satisfying \eqref{bf2}-\eqref{bf45}

\medskip
Our goal is now to solve \equ{mudidne} in $\d$, $d$ and $e$. To
do so, we first analyze the invertibility of the linear operators
$L_i$.

We start with a linear theory in $L^\infty$ setting  for the problem
\begin{equation}
\label{nn} L_{N+1} (\d,d_N) = h_1 , \quad L_N (\d,d_N) = h_2,
\end{equation}
with $h_1$ and $h_2$ bounded. Arguing as in  the proof of Lemma 8.1 in \cite{dmpa} (with obvious modifications), we can prove that, assuming  $A<0$, $C <0$ and $AC - B^2
>0$ and that
$\| h_1\|_\infty + \| h_2  \|_\infty $ is bounded. Then there exist
$(\mu, d)$ solution  to the above system and a
constant $C$ such that
\begin{equation}\label{eq:estdeltadN}
\| \mu\|_\infty + \| d_N \|_\infty + \ve^{ \frac 12 + \frac 1{N-2}}\|
\nabla_K\mu\|_\infty + \ve^{\frac 12} \| \nabla_K d_N \|_\infty \le
C\,[\,\| h_1\|_\infty + \| h_2  \|_\infty \, ]\ .
\end{equation}
As we mentioned above, to abtain this we follows the lines of the proof of Lemma 8.1 in \cite{dmpa}.  For existence we use the fact that the system \equ{nn} has a variational structure with  associated energy functional
$$
J(\delta,d_N)=\frac{1}{2}c_1 \e^{1+\frac{2}{N-2}}\mu_0 \int_K|\nabla_K \delta|^2+c_2 \e \mu_0 \int_K|\nabla_K d_N|^2+\frac{1}{2}\big(A\int_K \delta^2 +2B \int_K \delta d_N+
C\int_K d_N^2 \big)
$$
and clearly by our assumption on the constants $A,B,C$ this energy functional
 is positive, bounded from below away from zero and convex. Then,
existence of solution   follows.
The a-priori estimate \eqref{eq:estdeltadN} follows by a contradiction argument (as in Lemma 8.1 in \cite{dmpa}). Indeed, if \eqref{eq:estdeltadN} is false, we have existence of a
sequence $(h_{1n}, h_{2n})$ with
$
\| h_{1n}\|_\infty + \| h_{2n}  \|_\infty \to 0 \ ,
$
and a sequence of solutions $(\d_n , (d_N)_n )$ with
$$
\| \d_n \|_\infty + \| (d_N)_n  \|_\infty + \ve^{ \frac 12 + \frac
1{N-2}}\| \nabla_K \d_n  \|_\infty + \ve^{\frac 12} \| \nabla_K (d_N)_n
\|_\infty = 1.
$$
Since $A <0$ and $C <0$ and $C - {{B^2 } \over A} >0$ and  applying the maximum principle, Ascoli-Arzel\'a theorem we end up with a contradiction.
Now for every $ j=1,\ldots,N-1$ the operator $L_j$ is invertible by the non degeneracy of the submanifold $K$.  We can then prove that the equation
$
L_j \bar d=f
$
is solvable on $\bar d$ and the following estimate holds true
\begin{equation}\label{eq:estbard}
\| \bar d\|_\infty +  \|
\partial_a\bar d\|_\infty + \| \partial^2_{ab} \bar d \|_\infty \le
C\,\| f\|_\infty \ .
\end{equation}
We are then left with the study of the invertibility of the operator $L_0$. we prove it as the following result.
\end{proof}

\begin{lemma}\label{prop-gap}
There is a sequence $\e=\varepsilon_j\searrow0$ such that for any $\varphi\in C^{0,\alpha}(K)$, there exists a unique $e\in C^{2,\alpha}(K)$ such that
\begin{equation}\label{eq-K-e}
L_0(e)=\varphi
\end{equation}
with the property
\begin{equation}\label{estimate-e}
\|e\|_*
:=\|e\|_{L^\infty(K)}+\rho\|\nabla e\|_{L^\infty(K)}+\rho^2\|\nabla^2e\|_{L^\infty(K)}
\leq C\rho^{-k}\|\varphi\|_{L^\infty(K)},
\end{equation}
where $C$ is a positive constant independent of $\varepsilon_j$.
\end{lemma}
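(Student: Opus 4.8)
The plan is to treat $L_0$ as a self-adjoint second order elliptic operator on the closed manifold $K$ and to solve \eqref{eq-K-e} by spectral decomposition. The norm $\|\cdot\|_*$ weights the gradient by $\rho$ and the Hessian by $\rho^2$, i.e.\ it records $e$ in the dilated variable $y=\rho z\in K_\rho$; reading $L_0$ in that variable one has, up to explicit positive constants and lower order corrections, $L_0=\Delta_{K_\rho}+V_\e=\rho^2\Delta_K+V_\e$, where $V_\e$ is a smooth, uniformly positive potential on $K$ converging as $\e\to0$ to a fixed function built out of $\lambda_1$ and the function $\mu_0$ of \eqref{defmu0dn0}; its $\e$-dependence is $O(\e^{1/(N-2)})$ in relative size, and the cut-off and exponentially small terms appearing in \eqref{tere3e} give negligible perturbations. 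Since $L_0$ is self-adjoint and Fredholm of index zero between $C^{2,\alpha}(K)$ and $C^{0,\alpha}(K)$, equation \eqref{eq-K-e} has a unique solution $e\in C^{2,\alpha}(K)$ as soon as $0\notin\mathrm{spec}(L_0)$, and \eqref{estimate-e} will follow from a lower bound on $\mathrm{dist}\big(0,\mathrm{spec}(L_0)\big)$.

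Next I would identify the resonant parameters. Putting $\tau=\rho^2$, one has $0\in\mathrm{spec}(\rho^2\Delta_K+V_\e)$ exactly when $\tau^{-1}$ is an eigenvalue of the weighted problem $-\Delta_K e=\mu\,V_\e e$, i.e.\ $\tau=1/\mu^{V_\e}_j$ for some $j\ge1$, where $0<\mu^{V_\e}_1\le\mu^{V_\e}_2\le\cdots\to\infty$. Thus the bad values of $\e$ form a sequence decreasing to $0$, and $L_0$ is invertible in each gap between consecutive resonances; the crucial point is the size of the gaps. By Weyl's law $\#\{j:\mu^{V_0}_j\le t\}\sim c\,t^{k/2}$, so a dyadic window $[\tau,2\tau]$ of parameters contains $\sim\tau^{-k/2}$ resonances and hence some gap of length $\gtrsim\tau^{1+k/2}$. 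Using that $\e\mapsto\rho(\e)^2$ is continuous and onto a right neighbourhood of $0$, and absorbing the mild $\e$-dependence of $V_\e$ by a short continuity argument, I would choose $\e=\e_l\searrow0$ with $\rho_l^2$ at the centre of such a gap. Since the eigenvalue of $\rho^2\Delta_K+V_\e$ crossing $0$ does so transversally, with speed $\langle e_j,V_\e e_j\rangle/\rho^2\gtrsim\rho^{-2}$ in $\tau$, the distance $\gtrsim\rho_l^{2+k}$ to the nearest resonance in $\tau$ becomes $\mathrm{dist}\big(0,\mathrm{spec}(L_0)\big)\gtrsim\rho_l^{-2}\cdot\rho_l^{2+k}=\rho_l^{k}$, whence $\|L_0^{-1}\|_{L^2(K)\to L^2(K)}\le C\rho_l^{-k}$. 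I expect this to be the main obstacle: one must simultaneously ensure that a gap of the right order exists (Weyl counting) and that $\e_l$ can be placed inside it with a uniform lower bound on $\mathrm{dist}(0,\mathrm{spec})$ that is stable under the $\e$-dependence of the lower order terms — the non-resonance mechanism already used in \cite{dmpa,mmah,mm1,mm2}.

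Finally I would pass from the $L^2$ bound to \eqref{estimate-e}. From $\|e\|_{L^2(K)}\le C\rho_l^{-k}\|\varphi\|_{L^2(K)}$ and the Green's representation of $L_0^{-1}$ (in which the sup-norms of the near-resonant eigenfunctions cancel against their $L^2$-normalisation) one gets $\|e\|_{L^\infty(K)}\le C\rho_l^{-k}\|\varphi\|_{L^\infty(K)}$. Rewriting the equation as $\rho^2\Delta_K e=\varphi-V_\e e$ gives $\|\Delta_K e\|_{L^\infty}\le C\rho_l^{-k-2}\|\varphi\|_{L^\infty}$; then $L^p$ estimates (with $p$ large) and Schauder theory yield $e\in C^{2,\alpha}(K)$ with $\|\nabla^2e\|_{L^\infty}\le C\rho_l^{-k-2}\|\varphi\|_{L^\infty}$, while the interpolation inequality $\|\nabla e\|_{L^\infty}\le C\|e\|_{L^\infty}^{1/2}\|\nabla^2e\|_{L^\infty}^{1/2}$ gives $\|\nabla e\|_{L^\infty}\le C\rho_l^{-k-1}\|\varphi\|_{L^\infty}$. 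Multiplying by the weights $\rho_l$ and $\rho_l^2$ that appear in $\|\cdot\|_*$, each of the three contributions is $\le C\rho_l^{-k}\|\varphi\|_{L^\infty(K)}$, which is \eqref{estimate-e}. Uniqueness is immediate, since $L_0(e)=0$ forces $e=0$ for $\e=\e_l$.
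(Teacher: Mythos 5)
The paper gives no proof of this lemma; it only states that ``the proof is classical'' and points to \cite{demamu}, \cite{mmah}, \cite{mmp}, \cite{mazzeopacard}. Your overall strategy --- reading $L_0$ as $\rho^2\Delta_K + V_\e$ (which is the only reading consistent with the $\rho$-weights in $\|\cdot\|_*$ and with the need for a nonresonant sequence $\e_j$), converting the problem to a Weyl-law pigeonhole to find gaps in the spectrum, and picking $\rho_j$ inside a gap --- is exactly the classical argument these references use, and your counting is right: in a dyadic $\tau$-window of length $\tau$ there are $\sim \tau^{-k/2}$ resonant parameters, hence some gap of size $\gtrsim\tau^{1+k/2}$; converting to eigenvalue via the transversality $|d\lambda_j/d\tau|\sim\rho^{-2}$ gives $\mathrm{dist}\bigl(0,\mathrm{spec}(L_0)\bigr)\gtrsim\rho^k$ and so $\|L_0^{-1}\|_{L^2\to L^2}\le C\rho^{-k}$. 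The Schauder/interpolation bookkeeping for $\rho\|\nabla e\|_\infty$ and $\rho^2\|\nabla^2 e\|_\infty$ once $\|e\|_\infty$ is controlled is also correct.

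The genuine gap is in the step $\|L_0^{-1}\|_{L^2\to L^2}\le C\rho^{-k}\Longrightarrow\|e\|_{L^\infty}\le C\rho^{-k}\|\varphi\|_{L^\infty}$, which you justify by asserting that ``the sup-norms of the near-resonant eigenfunctions cancel against their $L^2$-normalisation'' in the Green's representation. This is not a theorem. For the single near-resonant mode $e_{j_0}$, the spectral series bounds $\|e\|_\infty$ by
$\frac{\|\varphi\|_{L^\infty}\,\|e_{j_0}\|_{L^1}\,\|e_{j_0}\|_{L^\infty}}{|\lambda_{j_0}|}$ plus the nonresonant tail, and Cauchy--Schwarz gives only $\|e_{j_0}\|_{L^1}\|e_{j_0}\|_{L^\infty}\ge 1$ with no a priori upper bound. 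Since the resonant window corresponds to eigenvalues $\mu_{j_0}\sim\rho^{-2}$, the Hörmander--Sogge bound on a closed $k$-manifold permits $\|e_{j_0}\|_{L^\infty}\lesssim\mu_{j_0}^{(k-1)/4}\sim\rho^{-(k-1)/2}$, and this can be saturated (e.g.\ on spheres, also by linear combinations inside a degenerate eigenspace); the naive bound then gives an extra factor $\rho^{-(k-1)/2}$ beyond the claimed $\rho^{-k}$. For $k=1$ there is no loss (eigenfunctions of $\Delta_{S^1}$ are uniformly bounded, which is the \cite{dmpa} setting), but for $k\ge 2$ the ``cancellation'' needs an actual proof --- either a sharper eigenfunction estimate tied to the particular manifold $K$, or, more robustly, an $L^\infty$ a priori estimate obtained without going through the spectral expansion (e.g.\ the contradiction/blow-up scheme used for Proposition~\ref{linear} in Section~\ref{s:aprsol}, which bounds $\|e\|_\infty$ directly and sidesteps eigenfunction sup-norms). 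Absent such an argument, the $L^\infty$ half of \eqref{estimate-e} is not established by your proof.

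Two minor points: (i) the additive term $D_2 d_N$ in the expression for $L_0$ in \eqref{mudidne} does not involve $e$ and should be absorbed into the right-hand side $\varphi$, as you implicitly do; (ii) the ``short continuity argument'' absorbing the $\e$-dependence of $V_\e$ is fine in spirit, but it deserves at least the observation that the resonant set $\{\tau : \tau\mu_j=V_\e(\tau)\}$ moves with velocity $|d\tau_j/d\tau|\sim\tau|\partial_\tau V_\e|/V_\e\sim\e^{1/(N-2)}\to 0$, so the gaps are stable under the $\e$-perturbation of the potential.
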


\begin{proof}
The proof is classical, the arguments are  similar in spirit to the ones used in \cite{demamu}, \cite{mmah} and some references therein. We also refer the reader to the papers \cite{mmp,mazzeopacard} for a different setting.  So we will omit the proof here.
\end{proof}

\noindent{\it Proof of Proposition \ref{glinear}:}
Using Proposition \ref{teouffa}, we can get the existence of solutions to the linear problem in the whole domain
$\Omega_\rho$, we refer the reader to \cite{demamu} for the detail proof.

\section{Appendix A}\label{secapp}

\setcounter{equation}{0}

\noindent{\bf Proofs of \equ{proh1zn1}-\equ{proh1z0h}:}
We recall that
$
h_{1,\e}   = h_{11}+\ve h_{12}+\rho h_{13}
$
where we have set
\begin{eqnarray*}\label{g1e}
h_{11} & =& pU^{p-1}\bar{U} +\ve U^p\log U, \\[3mm]
h_{12}&=&-\frac{N-2}{2}U^p \log(\mu_{0,\ve})-2d^0_{N,\ve}H_{ij}\partial_{ij}^2U-\lambda_1 e_{0,\ve} Z_0,\\[3mm]
h_{13}&=& \mu_{0,\ve}  \left[-2\xi_NH_{ij}\partial_{ij}^2 U+H_{\alpha\alpha}\partial_NU\right].
\end{eqnarray*}
By the result of \cite{dmpa}, we have
\begin{equation}\label{proh1zn1a}
\int_{\hat\DD}h_{11}Z_{N+1}d\xi  =  \ve \left[-
A_1 \left( { \mu_{0,\ve} \over   d_{N,\ve}^0} \right)^{N-2} +  A_2 +
\ve^{1\over N-2}
  \left({\mu_{0,\ve} \over   d_{N,\ve}^0} \right)^{N-1} g_{N+1} \left( {\mu_{0,\ve} \over   d_{N,\ve}^0} \right) \right]
\end{equation}
\begin{eqnarray}\label{proh1zna}
\int_{\hat\DD}h_{11}Z_{N}d\xi = \ve^{1+\frac{1}{N-2}} \left[A_3
  \left( { \mu_{0,\ve} \over   d_{N,\ve}^0} \right)^{N-1}   +
\ve^{1\over N-2}
  \left({\mu_{0,\ve} \over   d_{N,\ve}^0} \right)^{N} g_{N} \left( {\mu_{0,\ve} \over   d_{N,\ve}^0} \right)\right]
\end{eqnarray}
\begin{equation}\label{proh1zla}
\int_{\hat\DD}h_{11}Z_{l}d\xi = \ve^{2+\frac{3}{N-2}} g_{l}\left( {\mu_{0,\ve} \over   d_{N,\ve}^0} \right) \qquad \mbox{for}\ \ l=1,\ldots,N-1,
\end{equation}
\begin{eqnarray}\label{proh1z0ha}
\int_{\hat\DD}h_{11}Z_{0}d\xi = \ve  \left[A_4
  \left( { \mu_{0,\ve} \over   d_{N,\ve}^0} \right)^{N-2} +  A_5+
\ve^{1\over N-2}
  \left({\mu_{0,\ve} \over   d_{N,\ve}^0} \right)^{N-1} g_{0}\left( {\mu_{0,\ve} \over   d_{N,\ve}^0} \right)\right]
\end{eqnarray}
where the functions $g_i$ are smooth function
with $g_i (0) \not= 0$ and $A_i$ are positive constants. In particular,
$
A_3={p \alpha_N^{N+2 \over 2} (N-2)^2 \over 2^{N-1}} ( \int {\xi_N^2 \over
(1+ |\xi|^2)^{N+4 \over 2} })d\xi.
$

It remain to compute $h_{12}$ and $h_{13}$ product  with $Z_i$
for $i = 0, 1, \dots, N+1$. First, by symmetry, we have
\begin{equation}\label{proh1zlab}
\int_{\hat\DD}(\ve h_{12}+\rho h_{13})Z_{l}d\xi = \ve^{2+\frac{3}{N-2}} \Theta \qquad \mbox{for}\ \ l=1,\ldots,N-1,
\end{equation}
where $\Theta$ denotes a sum of functions of the form
\begin{eqnarray}\label{youjie}
&&f_1(\rho z) \left[ f_2 ( \mu_{0,\ve}, d^0_{N,\ve},e_{0,\ve}, \partial_a\mu_{0,\ve}, \partial_ad^0_{N,\ve},\partial_e e_{0,\ve}) +\right.\nonumber\\[3mm]
 &&\qquad \left.+o(1) f_3 ( \mu_{0,\ve}, d^0_{N,\ve},e_{0,\ve},  \partial_a\mu_{0,\ve}, \partial_ad^0_{N,\ve}, \partial_ae_{0,\ve} , \partial^2_{aa}\mu_{0,\ve},
\partial^2_{aa}d^0_{N,\ve}  , \partial^2_{aa}e_{0,\ve} ) \right]
\end{eqnarray}
where $f_1$ is a smooth function uniformly bounded in $\ve$, $f_2$ and $f_3$ are smooth functions of their arguments, uniformly bounded in $\ve$ as $\mu_{0,\ve}$, $d^0_{N,\ve}$ and $e_{0,\ve}$ are uniformly bounded, and $o(1)\to0$ as $\ve\to0$.

\medskip

First, product with $Z_{N+1}$, we have
\begin{eqnarray*}
&&\int_{\hat\DD}(\ve h_{12}+\rho h_{13})Z_{N+1}d\xi= \ve\int_{\hat\DD}h_{12}Z_{N+1}d\xi+\ve^2\Theta\nonumber\\
&=&   \ve\int_{\hat\DD}\left\{-\frac{N-2}{2}U^p \log(\mu_{0,\ve}) -2d^0_{N,\ve}H_{ij}\partial_{ij}^2U\right\}Z_{N+1}d\xi+\ve^2\Theta
\end{eqnarray*}
where $\Theta$ is a sum of functions of the form (\ref{youjie}).

We set
$
U_\lambda(\xi)=\alpha_N
\left(\frac{\lambda}{\lambda^2+|\xi|^2}\right)^\frac{N-2}{2}.
$
Since $( \pa_\lambda U_\lambda )_{|\lambda=1} =
-Z_{N+1}$, we have
$$
\int_{\hat{\R^N}} U^{p} Z_{N+1}=\int_{ \R^N } U^{\frac{N+2}{N-2}} Z_{N+1} =
-\frac{N-2}{2N} \pa_\lambda \left( \int_{ \R^N }
U_\lambda^\frac{2N}{N-2} \right)_{|\lambda=1}=0.
$$
Here we used the fact that  and $ \int_{\R^N} U_\lambda^\frac{2N}{N-2}$ does not depend
on $\lambda$ (by simple change of variables argument).  Moreover,
$$
 H_{ij}\int_{\R^N}\partial_{ij}^2U Z_{N+1}d\xi= H_{11}\int_{\R^N}\partial_{11}^2U Z_{N+1}d\xi =H_{11}\int_{\R^N}\partial_{11}^2U (\gamma U+\xi_l\partial_l U)d\xi
$$
$$
 = -H_{11}\gamma\int_{\R^N}|\partial_{ 1}^2U|^2d\xi+ \frac{1}{N}H_{11} \int_{\R^N}\Delta  U \xi_l\partial_l U d\xi
$$
$$
 = -H_{11}\gamma\int_{\R^N}|\partial_{ 1}^2U|^2d\xi- \frac{1}{N}H_{11} \int_{\R^N}  U^{p} \xi_l\partial_l U d\xi
$$
$$
 = -H_{11}\gamma\int_{\R^N}|\partial_{ 1}^2U|^2d\xi- \frac{1}{N(p+1)}H_{11} \int_{\R^N}    \partial_l (U^{p+1})\xi_l d\xi
$$
$$
= -H_{11}\gamma\int_{\R^N}|\partial_{ 1}^2U|^2d\xi+ \frac{1}{ (p+1)}H_{11} \int_{\R^N}   U^{p+1}  d\xi
$$
$$
 = -H_{11}\gamma\int_{\R^N}|\partial_{ 1}^2U|^2d\xi+ \frac{N}{ (p+1)}H_{11} \int_{\R^N}  |\partial_{ 1} U|^2  d\xi=0.
$$
Collecting these facts, we get
$
 \int_{\hat\DD}(\ve h_{12}+\rho h_{13})Z_{N+1}d\xi
=    \ve^{2} \Theta,
$
where $\Theta$ is a sum of functions of the form (\ref{youjie}).

Next,
product with $Z_N$, we have
\begin{eqnarray*}
&&\int_{\hat\DD}(\ve h_{12}+\rho h_{13})Z_Nd\xi=\rho\int_{\hat\DD}h_{13}Z_Nd\xi+\ve^{2}\Theta\nonumber\\
&=& \rho \mu_{0,\ve}\left[-\int_{\R^N}2\xi_NH_{ij}\partial_{ij}^2U\partial_NUd\xi+H_{\alpha\alpha}\int_{\R^N}|\partial_NU|^2d\xi\right]
+\ve^2\Theta\nonumber\\
&=& \rho \mu_{0,\ve}\left[- (H_{jj}-H_{\alpha\alpha}) \int_{\R^N}|\partial_NU|^2d\xi\right]
+\ve^2\Theta\nonumber\\
&=&  \rho \mu_{0,\ve}  H_{aa} \int_{\R^N}|\partial_NU|^2d\xi
+\ve^2\Theta,
\end{eqnarray*}
where $\Theta$ is a sum of functions of the form (\ref{youjie}).
Here we used the following fact
\begin{eqnarray*}
&& \int_{\R^N} \xi_NH_{ij}\partial_{ij}^2U\partial_NUd\xi = H_{jj}\int_{\R^N} \xi_N\partial_{jj}^2U\partial_NUd\xi\nonumber\\
 &=&\frac{1}{N-1}H_{jj}\int_{\R^N} \xi_N\partial_NU\sum\limits_{i=1}^{N-1}\partial_{ii}^2Ud\xi\nonumber\\
 &=&\frac{1}{N-1}H_{jj}\int_{\R^N} \xi_N\partial_NU(\Delta U-\partial_{NN}^2U)d\xi\nonumber\\
 &=&-\frac{1}{N-1}H_{jj}\int_{\R^N} \xi_N\partial_NU(  U^p+\partial_{NN}^2U)d\xi\nonumber\\
 &=&-\frac{1}{N-1}H_{jj}\left[\frac{1}{p+1}\int_{\R^N} \xi_N\partial_N(U^{p+1} )d\xi+\frac{1}{2}\int_{\R^N} \xi_N\partial_N(|\partial_N U|^2 )d\xi\right]\nonumber\\
 &=& \frac{1}{N-1}H_{jj}\left[\frac{1}{p+1}\int_{\R^N}   U^{p+1}  d\xi+\frac{1}{2}\int_{\R^N}  |\partial_N U|^2  d\xi\right]\nonumber\\
 &=& \frac{1}{N-1}H_{jj}\left[\frac{1}{p+1}N \int_{\R^N}  |\partial_N U|^2  d\xi+\frac{1}{2}\int_{\R^N}  |\partial_N U|^2  d\xi\right]\nonumber\\
 &=& \frac{1}{2}H_{jj}  \int_{\R^N}  |\partial_N U|^2  d\xi,
\end{eqnarray*}
since $\int_{\R^N}   U^{p+1}  d\xi=\int_{\R^N}  (-\Delta U)U  d\xi=\int_{\R^N}   |\nabla U|^2 d\xi=N\int_{\R^N}  |\partial_N U|^2  d\xi$.

\medskip

Finally, using the orthogonality in $L^2$ of $Z_0$ with respect to $Z_i$, for $i=1, \ldots , N+1$,
 direct computations show
$$
\int_{\hat\DD}(\ve h_{12}+\rho h_{13})Z_{0}d\xi= -A_7\log(\mu_{0,\ve})-\lambda_1 e_{0,\ve}-2H_{jj}d_{N,\ve}^0\int_{\R^N}\partial^2_{jj}UZ_0d\xi+\ve^2\Theta
$$
where $\Theta$ is a sum of functions of the form (\ref{youjie}), and $A_7=\frac{N-2}{2}\int_{\mathbb{R}^N}U^pZ_0d\xi$.

\medskip

Collecting all formulas from  (\ref{proh1zn1a}), we get the results.

\section{Acknowledgments}

The research of the first  author
has been partly supported by  National Natural Science Foundation of China 11501469. The research of the second  author has been  supported by  Fondecyt Grant 1140311, fondo Basal CMM and  ``Millennium Nucleus Center for Analysis of PDE NC130017".  The research of the third  author has been partly supported by Fondecyt Grant 1160135 and  ``Millennium Nucleus Center for Analysis of PDE NC130017".


\begin{thebibliography}{9999}


\bibitem{aubin}
T. Aubin, {\em Probl\`emes isop\'erim\'etriques et espaces de
Sobolev\/}. J. Differential Geometry, {\bf 11}  (1976), no. 4, 573--598.



\bibitem{bc}
 A. Bahri, J.M. Coron,
 {\em On a nonlinear elliptic equation involving the critical
 Sobolev exponent: the effect of the topology of the domain}
 Comm. Pure Appl. Math. {\bf 41} (1988), 255-294.



\bibitem{blr}
 A. Bahri,  Y.-Y. Li, O. Rey,
 {\em On a variational problem with lack of compactness:
 the topological effect of the critical points at infinity}.
Calc. Var. Partial
Differential Equations {\bf 3} (1995), 67-93.





\bibitem{bianchiengel}
G. Bianchi, H.  Egnell,  {\em A note on the Sobolev inequality}. J. Funct. Anal. {\bf 100}  (1) (1991) 18-24.




\bibitem{br}
H. Brezis, {\em Elliptic equations with limiting Sobolev
exponent-The impact of Topology}, Proceedings 50th Anniv. Courant
Inst.-Comm. Pure Appl. Math. {\bf 39} (1986)

\bibitem{bp}
H. Brezis, L.A. Peletier, {\em Asymptotics for elliptic equations
involving critical growth,\/}
 Partial differential equations and the calculus of variations, Vol. I,
 Progr. Nonlinear Diff. Equat. Appl. (1989), 149--192.


\bibitem{bn}
H. Brezis, L. Nirenberg, {\em Positive solutions of nonlinear
elliptic equations involving critical Sobolev exponents}.  Comm.
Pure Appl. Math. {\bf 36} (1983), 437--47.



\bibitem{chavel}
I. Chavel, Riemannian Geometry, a Modern Introduction, Cambridge
Tracts in Math., vol. 108, Cambridge Univ. Press, Cambridge, 1993.


\bibitem{cfp} M. Clapp, J. Faya, A. Pistoia, {\em Positive solutions to a supercritical elliptic problem that concentrate along a thin spherical hole.} J. Anal. Math. {\bf 126} (2015), 341–357.


\bibitem{dpv} J. D\'avila, A. Pistoia, G. Vaira, {\em Bubbling solutions for supercritical problems on manifolds}
Journal de Math\'ematiques Pures et Appliqu\'ees
{\bf 103}, no.  6 (2015),
1410--1440.


\bibitem{dfm}
M. del Pino, P. Felmer, M. Musso, {\em Two-bubble solutions in the
super-critical Bahri-Coron's problem} Calc. Var. PDE {\bf 16}(2) (2003), 113--145.

\bibitem{dfm1}
M. del Pino, P. Felmer, M. Musso, {\em Multi-peak solutions for
super-critical elliptic problems in domains with small holes}. J.
Differential Equations {\bf 182}(2)(2002), 511--540.

\bibitem{dkw}
M. del Pino, M. Kowalczyk,  J. Wei, {\em Concentration on curves for
nonlinear Schr\"odinger equations.} Comm. Pure Appl. Math. {\bf 60} (2007),
no. 1, 113--146.



\bibitem{demamu} M. del Pino , F. Mahmoudi, M. Musso, {\it Bubbling on boundary submanifolds for the Lin-Ni-Takagi problem at higher critical exponents},
Journal of the European Mathematical Society, {\bf 16} (2014), 1687-1748.

\bibitem{dm} M. del Pino, M. Musso, {\em Bubbling and criticality in
two and higher dimensions}. Recent advances in elliptic and
parabolic problems, 41--59, World Sci. Publ., Hackensack, NJ,
(2005).


\bibitem{dmpa}
M. del Pino, M. Musso, F. Pacard, {\em Bubbling along geodesics
near the second critical exponent.} J. Eur. Math. Soc. (JEMS) {\bf 12}
(2010), no. 6, 1553--1605.



\bibitem{dmm}
S. Deng, F. Mahmoudi, M. Musso, {\em Bubbling on boundary sub-manifolds for a semilinear Neumann problem near high critical exponents}. DCDS-A vol. {\bf 36}, no. 6 (2016), 3035-3076.

\bibitem{wei}
M. Flucher, J. Wei, {\em Semilinear Dirichlet problem with nearly
critical exponent, asymptotic location of hot spots.} Manuscripta
Math. {\bf 94} (1997), no. 3, 337--346.

\bibitem{fowler}
R. Fowler, {\em Further studies on Emden's and similar
differential equations}. Quart. J. Math. {\bf 2} (1931), 259--288.



\bibitem{han}
Z. C. Han, {\em Asymptotic approach to singular solutions for
nonlinear elliptic equations involving critical Sobolev exponent\/}.
 Ann. Inst. H. Poincar\'e Anal. Non Lin\'eaire {\bf 8},  no. 2, (1991),
159--174.



\bibitem{kw}
J. Kazdan,  F. Warner,
 {\em Remarks on some quasilinear
elliptic equations.} Comm. Pure Appl. Math. {\bf 28}, no. 5, (1975), 
567--597.







\bibitem{mmah}
F. Mahmoudi, A.   Malchiodi  {\em Concentration on minimal
submanifolds for a singularly perturbed Neumann problem.} Adv. Math.
{\bf 209} (2007), no. 2, 460--525.


\bibitem{mmp}
F. Mahmoudi, R. Mazzeo, F. Pacard, {\em Constant mean
curvature hypersurfaces condensing along a submanifold.}
Geom. Funct. Anal.  Vol.  {\bf 16} (2006) 924--958.

\bibitem{masayao}
F. Mahmoudi, F. Sanchez, W. Yao, {\em On the Ambrosetti-Malchiodi-Ni conjecture for general submanifolds},
J. Differential Equations, {\bf 258} (2015), no. 2, 243�-280.

\bibitem{mm1}
 A. Malchiodi, M.  Montenegro, {\em Boundary concentration phenomena
for a singularly perturbed elliptic problem}, Comm. Pure Appl. Math,
{\bf 15} (2002), 1507--1568.

\bibitem{mm2}
A. Malchiodi, M. Montenegro, {\em
Multidimensional Boundary-layers for a singularly perturbed Neumann
problem,} Duke Math. J. {\bf 241}(1) (2004), 105--143.


\bibitem{m}
A. Malchiodi, {\em Concentration at curves for a singularly perturbed Neumann problem
in three-dimensional domains.} Geom. Funct. Anal. {\bf 15} (2005), no. 6,
1162--1222.

\bibitem{mazzeopacard}
R. Mazzeo, F. Pacard, {\em Foliations by constant mean curvature tubes},  Comm. Anal. Geom. {\bf 13} (2005), no. 4, 633-670.






\bibitem{passaseo}
D. Passaseo, {\em Nonexistence results for elliptic problems with
supercritical nonlinearity in nontrivial domains.} J. Funct. Anal.
{\bf 114} (1993), no. 1, 97--105.



\bibitem{Po}
S. Pohozaev, {\em Eigenfunctions of the equation $\Delta u +\lambda
f (u) =0$}, Soviet. Math. Dokl. {\bf 6}, (1965), 1408-1411.


\bibitem{rey}
O. Rey, {\em The role of the Green's function in a nonlinear
elliptic equation involving the critical Sobolev exponent,\/}
 J. Funct. Anal. {\bf 89} (1990), no. 1, 1--52.




\bibitem{talenti}
G. Talenti, {\em Best constant in Sobolev inequality,\/} Ann. Mat.
Pura Appl. (IV)  {\bf 110} (1976), 353--372.




\end{thebibliography}
\end{document}